\documentclass[aap]{amsart}
\usepackage{a4wide}
\usepackage{amsfonts}
\usepackage{psfrag}

\usepackage{amsmath}
\usepackage{amssymb}
\usepackage{amsthm}
\usepackage{graphicx}
\newtheorem {lemma}{Lemma}[section]
\newtheorem {thm}{Theorem}
\newtheorem{proposition}[lemma]{Proposition}

\newtheorem {rem}[lemma]{Remark}
\newtheorem {cor}[lemma]{Corollary}

\newcommand{\E}{\mathbb{E}}
\newcommand{\N}{\mathbb{N}}
\newcommand{\R}{\mathbb{R}}
\newcommand{\eps}{\epsilon}
\newcommand{\1}{\mathbf{1}}
\newcommand{\ARG}{{\boldsymbol\cdot}}
\def\NU{\nu}
\def\MU{\mu}

\setlength{\headheight}{2mm}
\setlength{\headsep}{7mm}
\setlength{\topmargin}{-5mm}
\setcounter{secnumdepth}{2}
\newcommand{\To}[1]{\,\stackrel{#1}{\longrightarrow}\,}
\newcommand{\Toi}[1]{\To{#1 \rightarrow \infty}}

\newcommand{\limN}{\Toi{N}}
\def\rX{\tilde X}
\def\bZ{{\mathbf Z}}
\def\brZ{{\tilde {\mathbf Z}}}
\def\rZ{{\tilde Z}}
\def\rI{\tilde I}
\def\mtimes{\diamond}

\renewcommand{\P}{\mathbb{P}}
\parindent 0pt

\numberwithin{equation}{section}
\numberwithin{equation}{section}
\numberwithin{equation}{section}


\newcommand{\mfalls}{\quad\mbox{if \;}}

\newcommand{\ve}{\varepsilon}
\newcommand{\CA}{\mathcal{A}}
\newcommand{\etAN}{e^{t\CA^N}}
\newcommand{\CF}{\mathcal{F}}
\newcommand{\CL}{\mathcal{L}}
\newcommand{\wlimN}{\stackrel{N \rightarrow \infty}{\Longrightarrow}}
\newcommand{\sumkS}{\sum_{k=1}^N}\begin{document}
\title[Finite System Scheme for Infinite Rate Mutually Catalytic Branching]{Finite System Scheme for Mutually Catalytic Branching with infinite branching rate}

\author{Leif D\"{o}ring}
\address{School of Business Informatics and Mathematics\\ Universit\"{a}t Mannheim\\ 68131 Mannheim\\Germany}

\thanks{}
\email{doering@uni-mannheim.de}
\author{Achim Klenke}
\address{Institut f\"{u}r Mathematik\\Johannes Gutenberg-Universit\"{a}t Mainz\\Staudingerweg 9\\55099 Mainz\\Germany}
\email{math@aklenke.de}
\author{Leonid Mytnik}
\address{Faculty of Industrial Engineering
and Management\\Technion Israel Institute of Technology\\Haifa 32000\\Israel}
\email{leonid@ie.technion.ac.il}
\thanks{A. Klenke and L. Mytnik acknowledge support by the German Israeli Foundation (GIF) with grant 1170-186.6/2011. L. D\"{o}ring acknowledges support by the Swiss national fund (SNF) with the Ambizione grant 148117.
}
\subjclass{Primary 60K35; Secondary 60J80, 60J60, 60J75, 60F05, 60H20}
\keywords{finite systems scheme, interacting diffusions, mean field limit, mutually catalytic branching}
\date{18 January 2017, revised version}

\begin{abstract}
For many stochastic diffusion processes with mean field interaction, convergence of the rescaled total mass processes towards a diffusion process is known.

Here we show convergence of the so-called finite system scheme for interacting jump-type processes known as mutually catalytic branching processes with infinite branching rate. Due to the  lack of second moments the rescaling of time is different from the finite rate mutually catalytic case.
The limit of rescaled total mass processes  is identified as the finite rate mutually catalytic branching diffusion. The convergence
of rescaled processes holds jointly with convergence of coordinate processes, where the latter converge at a different time scale.
\end{abstract}
\maketitle

\section{Introduction and Main Results}
\label{S1}
\subsection{The finite systems scheme}
\label{S1.1}
The finite systems scheme for interacting diffusion processes was developed by Cox and Greven \cite{CoxGreven1990} and Cox, Greven and Shiga \cite{CoxGrevenShiga1995} as a tool for a quantitative description of large, but finite, systems in terms of the equilibrium distributions of their infinite counterparts. In order to describe the idea, it is most convenient to sketch an example. In fact, we will only describe the so-called mean field finite systems scheme here. For $N\in\N$ let $S^N:=\{1,\ldots,N\}$ be a finite site space. Each site $k\in S^N$ carries a diffusion process $(X^N_t(k))_{t\geq0}$ with values in an interval $I$. Furthermore, the diffusion processes interact mutually via symmetric migration. More formally, we have the following set of stochastic differential equations (the second line being an equivalent reformulation of the first line):
\begin{equation}
\label{E1.01}
\begin{aligned}
	dX^N_t(k)&=(\mathcal A^N X^N_t)(k)\,dt+\sqrt{g(X^N_t(k))}\,dB_t(k)\\
&=\frac{1}{N}\sum_{l\in S^N} \big(X^N_t)(k)-X^N_t)(k)\big)\,dt+\sqrt{g(X^N_t(k))}\,dB_t(k),\qquad k\in S^N,\, t\geq 0.\end{aligned}\end{equation}

Here $B(k)$, $k\in S^N$, are independent Brownian motions and the matrix
 \begin{align}\label{E1.02}
\mathcal A^N(k,l)=\begin{cases}
	\frac 1 N,&\mfalls k\neq l,\\
	\frac{1}{N}-1,&\mfalls k=l,
	\end{cases}
\end{align}
is the transition operator for migration between sites. The function $g:I\to[0,\infty)$ is the so-called diffusion coefficient and is assumed to be sufficiently smooth and well-behaved. We will denote the continuous time transition matrix of $\CA^N$ by
\begin{equation}\label{E1.03}
\etAN(k,l)=\frac1N\big(1-e^{-t}\big)+\1_{\{k=l\}}\,e^{-t}.
\end{equation}
Note that $\etAN$ is the time $t$ transition matrix of a continuous time Markov chain on $S^N$ that makes uniformly distributed jumps at rate $1$ and this is the chain defined by the $q$-matrix $\CA^N$.
\clearpage
Most prominent examples are
\begin{itemize}
	\item[(a)] $I=\R$, $g_\gamma(x)\equiv \gamma>0$, so-called ``interacting Brownian motions'',
	\item[(b)] $I=[0,\infty)$, $g_\gamma(x)=\gamma\, x^2$, so-called ``parabolic Anderson model with Brownian potential'',
	\item[(c)] $I=[0,1]$, $g_\gamma(x)=\gamma\, x\,(1-x)$, so-called ``stepping stone model''.
\end{itemize}

Let
\begin{equation}
\label{E1.04}
\Theta^N_t:=\frac{1}{N}\sum_{k\in S^N}X^N_t(k),\qquad t\geq0,\end{equation}

be the average process of the system \eqref{E1.01}. Due to the choice of $\mathcal A^N$ the matrix multiplication in \eqref{E1.01} can be rewritten as
\begin{equation}
\label{E1.05}
	 dX^N_t(k)=\big(\Theta^N_t-X^N_t(k)\big)\,dt+\sqrt{g(X^N_t(k))}\,dB_t(k),\qquad k\in S^N,\, t\geq 0.
\end{equation}
We give a very rough sketch of the basic idea of the finite systems scheme. Assume that $\Theta^N_0$ converges weakly to some value $\theta$ as $N\to\infty$.  By a law of large numbers, we get $\lim_{N\to\infty}\Theta^N_t=\theta$ for all $t\geq0$ and hence, formally, the equation \eqref{E1.05} for one coordinate converges to
\begin{equation}
\label{E1.06}
	 dX_t(k)=\big(\theta-X_t(k)\big)\,dt+\sqrt{g(X_t(k))}\,dB_t(k),\qquad k\in\N, \,t\geq 0,
\end{equation}
as $N\to\infty$. The diffusions $X(k)$, $k\in\N$, are now independent and (under suitable assumptions on $g$) converge for $t\to\infty$ to an ergodic equilibrium distribution $\nu_\theta=\nu_\theta^g$.\\ \smallskip

Now an appropriate time-rescaling gives a non-trivial limit for $\Theta^N$. More precisely, for $\beta^N:=N$, under mild assumptions on $g$, the time-rescaled process $(\Theta^N_{\beta^Nt})_{t\geq0}$ converges to a diffusion process $\Theta$ which is the solution of the stochastic differential equation
\begin{equation}\label{E1.07}
	d\Theta_t=\sqrt{g^*(\Theta_t)}\,dB_t,\qquad  \,t\geq 0.
\end{equation}
Here, $B$ is a Brownian motion and
\begin{equation}
\label{E1.08}
g^*(\theta)=\int g(x)\,\nu^g_\theta(dx)
\end{equation}
is the (approximate and up to a factor $1/N^2$) mean contribution of a single coordinate $X^N(k)$ to the square variation process $\langle \Theta^N\rangle$.

The nonlinear map $g\mapsto g^*$ was studied in a series of papers by \cite{BaillonClementGrevenHollander1993,BaillonClementGrevenHollander1995} and (in a multi-dimensional situation) \cite{DawsonGrevenHollanderSunSwart2008}. In particular, the fixed shapes (i.e. $g^*=c\cdot g$ for some $c>0$) are (uniquely up to linear factors) identified as
\begin{itemize}
\item $g(x)=1$ if $I=\R$,
\item $g(x)=x$ if $I=[0,\infty)$,
\item $g(x)=x(1-x)$ if $I=[0,1]$.
\end{itemize}
In the situation of two-dimensional interacting models, formally corresponding to \eqref{E1.01} with $I=\R_+^2$, the only non-trivial fixed shape is $g((u,v))=u\cdot v$ for $u,v\geq0$. For this situation, the finite systems scheme was developed by \cite{CoxDawsonGreven2004}.\\

We see that the average process $\Theta^N$ fluctuates on a slower time scale than the individual coordinate processes $X^N(k)$. Hence, from time $\beta^Nt$ to $\beta^Nt+s$ (with $s>0$ large) the coordinates have enough time to converge (independently) to their equilibrium state $\nu^g_{\theta'}$ (given $\Theta^N_{\beta^Nt}=\theta'$). Thus, we should have (in the sense of weak convergence of finite dimensional distributions)
\begin{equation}
\label{E1.09}
\mathcal{L}\Big(\big(\Theta^N_{\beta^Nt},(X^N_{\beta^Nt}(k))_{k\in S^N}\big)\Big)\limN\int P^g_t(\theta,d\theta')\left(\delta_{\theta'}\otimes (\nu^g_{\theta'})^{\otimes\N}\right),
\end{equation}
where $P^g_t(x,dy)$ denotes the transition probabilities of $\Theta$ from \eqref{E1.07}.
One could even expect that the full processes $X^N$ (and not only the marginal at time $\beta^Nt$) converge. To be more precise, denote by $\check \nu^g_\theta$ the distribution of the process $(X_t)_{t\geq0}$, where $X$ is the stationary solution of \eqref{E1.06}. Then, under some mild regularity conditions on $g$,
\begin{equation}
\label{E1.10}
\mathcal{L}\Big(\big(\Theta^N_{\beta^Nt},((X^N_{\beta^Nt+s}(k))_{s\geq0})_{k\in S^N}\big)\Big)\limN\int P^g_t(\theta,d\theta')\left(\delta_{\theta'}\otimes (\check\nu^g_{\theta'})^{\otimes\N}\right).
\end{equation}
The statements \eqref{E1.09} and \eqref{E1.10} are often referred to as (mean field) finite systems scheme. The formal statements are proved (in greater generality) in \cite[Theorem 1]{DawsonGreven1993b} and for a two-dimensional setting in \cite{CoxDawsonGreven2004}.

\subsection{The infinite rate renormalization}
\label{S1.2}
Consider first the case of one-di\-men\-sion\-al interacting diffusions with compact $I=[0,1]$. For the renormalization map $g\mapsto g^*$, the only fixed shape is $g(x)=x(1-x)$, that is, the Wright-Fisher diffusion. However, the Wright-Fisher diffusion also pops up as the result of a renormalization procedure that we explain now. Consider the solution $X^{N,\gamma}$ of \eqref{E1.01} with $g$ replaced by $g^\gamma=\gamma\cdot g$ for some $\gamma>0$. We assume that $g(x)=0$ for $x=0,1$ and $g(x)>0$ for $x\in(0,1)$. One can show that, as $\gamma\to\infty$, $X^{N,\gamma}$ converges (for example in finite dimensional distributions or in the Meyer-Zheng pseudo-path topology) to a process $X^N$ with values in $\{0,1\}^{S^N}$. In fact, in the interior $(0,1)$ of $I$, the coordinate processes fluctuate faster and faster and are thus (in the limit) driven to the boundary of $I$ immediately.
 Furthermore, since
$$X^{N,\gamma}_t(k)-X^{N,\gamma}_0(k)-\int_0^t\CA^NX^{N,\gamma}_s(k)\,ds,\qquad t\geq0,\,k\in S^N,$$
is a martingale, it can be seen that also
$$X^{N}_t(k)-X^{N}_0(k)-\int_0^t\CA^NX^{N}_s(k)\,ds,\qquad t\geq0,\,k\in S^N,$$
is a martingale. From the martingale property it can be deduced that $X^{N}$ is a voter model with a symmetric updating mechanism. With this convergence in mind, the voter process can be seen as an "infinite rate" ($\gamma=\infty$) model.
The average process $\Theta^{X^N}$ of the voter process is known as the Moran model from population genetics. It is well known that $(\Theta^{X^N}_{Nt})_{t\geq0}$ converges in finite dimensional distributions (and even in the Skorohod topology) to the Wright-Fisher diffusion, that is, to the solution of the stochastic differential equation
$$dY_t=\sqrt{Y_t(1-Y_t)}\,dB_t.$$
Here we see that the diffusion function $g(x)=x(1-x)$ shows up in the limiting equation for the infinite rate renormalization scheme if $I=[0,1]$. One could try to find also the fixed shapes for $I=[0,\infty)$ and $I=\R$ as limits of an infinite rate renormalization. However, a little thought shows that the limit as $\gamma\to\infty$ is either trivial ($I=[0,\infty)$) or not well defined ($I=\R$). Hence, for interacting diffusions which are one-dimensional at each site not much more can be done.\\ \smallskip

The situation becomes more interesting in the two-dimensional setting corresponding formally to $I=\R_+^2$. Similarly to the universal convergence to the voter process described above, in the two-dimensional setting, under some conditions on $g$,
 there is a non-trivial discontinuous limiting process $X^{N}$ if for $g^\gamma=\gamma g$ we let $\gamma\to\infty$. Similarly to the voter process which takes values at each site in the boundary $\{0,1\}$ of $[0,1]$, the universal limiting process $X^{N}$ takes values in the boundary of $\R_+^2$, that is
 \begin{align*}
E:=[0,\infty)^2\setminus (0,\infty)^2.
\end{align*}
\begin{rem}
	For $x=(x_1,x_2)\in \R^2_+$ we call the two coordinates the types. If $x\in E$ with $x_2=0$ we say $x$ is of type 1, if $x_1=0$ we say $x$ is of type 2.
\end{rem}
The limiting process $X^{N}$ does not depend on the details of the diffusion function $g$ as long as $g$ is strictly positive in $(0,\infty)^2$ and $0$ at the quadrant's boundary  $E$ (and is sufficiently regular to allow existence of
 a solution to  SDEs). See \cite[Theorem 1.5]{KM2} for a formal statement. The process $X^{N}$ is called \emph{infinite rate mutually catalytic branching process} or MCB($\infty$) since it was introduced as infinite branching rate limit of mutually catalytic branching processes as will be discussed  in the next subsection. \\ \smallskip

We will show that there is a time scale $\beta^N$ such that $(X^{N}_{\beta^Nt})_{t\geq0}$ converges in the Skorohod topology to a process that solves the two-dimensional analogue of \eqref{E1.01} with $g((u,v))=(8/\pi) uv$, the fixed shape of the transformation $g\mapsto g^*$ in two dimensions. Furthermore, we will develop the finite systems scheme in the sense of \eqref{E1.09} and \eqref{E1.10}. Unlike the voter model, the limiting process $X^{N}$ lacks second moments (but possesses all $p$th moments for $p<2$) and is described by a jump type stochastic differential equation. Hence, usual standard arguments of computing the square variation process do not work. Furthermore, the typical scaling in the presence of variances does not work properly and we have to employ a logarithmic correction: \begin{equation}
\label{E1.12}\beta^N=\frac{N}{\log N}.
\end{equation}

\subsection{Mutually catalytic branching processes}\label{S1.3}
In this subsection we define the universal infinite rate limiting process $X^{N}$ of two-dimensional interacting diffusion processes on $\R_+^2$ with sites space $S^N$. The process is introduced as infinite rate limit of mutually catalytic branching processes and can be characterized as solution to a stochastic equation.\\\smallskip

Dawson and Perkins \cite{DawsonPerkins1998} introduced a spatial two-type branching model where the local branching rate of type 1 is proportional to the amount of type 2 particles at the same site and vice versa. Furthermore, the infinitesimal individuals migrate through space according to some Markov kernel. In our setting with mean-field interaction $\mathcal A^N$ on $S^N$, the model can be described as the (unique weak) solution of the system of stochastic differential equations driven by independent Brownian motions
\begin{align}
\label{E1.13}
		d X^{N,\gamma,i}_t(k)=\mathcal A^N X^{N,\gamma,i}_t(k)\,dt+\sqrt{\gamma \, X^{N,\gamma,1}(k)\, X^{N,\gamma,2}(k)}\;dB^i_t(k)
	\end{align}
for $i=1,2$, $k\in S^N$, $\gamma>0$ and $t\geq 0$. This model is called \emph{mutually catalytic branching model with finite rate} $\gamma$, or MCB($\gamma$), and solutions
 $$X^{N,\gamma}_t(k)=\big(X^{N,\gamma,1}_t(k),X^{N,\gamma,2}_t(k)\big)\in\R_+^2$$
are called  \emph{mutually catalytic branching processes}. As one can see, this is a
particular case of a two-dimensional interacting diffusion model with $g(u,v)=\gamma uv$. \\

Now we give the description of the infinite rate mutually catalytic branching process MCB($\infty$). If in \eqref{E1.13} we let $\gamma\to\infty$, then, heuristically, the single coordinates $X^{N,\gamma}_t(k)$ are driven to the boundary $E$ of $\R_+^2$ immediately. Since the diffusion is isotropic, the distribution of the exit point does not depend on the specific diffusion coefficient and  thus is the same as for planar Brownian motion  $W=(W^1,W^2)$ on $[0,\infty)^2$, started at $W_0=x$ (this is a consequence of the Dubins-Schwarz theorem). Let $Q_{x}(dy)$ denote the harmonic measure of planar Brownian motion on $\R_+^2$, started at $x\in\R_+^2$. That is, $Q_{x}(dy)$ is the distribution of the exit point of a planar Brownian motion in the quadrant started at $x$. Loosely speaking, if site $k$ is populated by type 2, then migration of type 2 individuals results in deterministic (discrete space heat flow) changes while type 1 immigration results in jump activity.

Using the explicit Lebesgue densities of the harmonic measures $Q_{x}$ for $x\in(0,\infty)^2$ (see, e.g., \cite[Lemma 1.2]{KM2}), it is easy to show that for $x=(x_1,0)\in E$, the vague limit  $$\NU_x:=\lim_{\varepsilon\to0}\frac1\varepsilon Q_{(x_1,\varepsilon)}$$
exists  on $E\setminus\{x\}$. The analogous statement holds for $x=(0,x_2)\in E$. The measure $\NU_x$ can be thought of as the prototypic measure for jumps away from $x$ when there is an immigration of the respective other type. Due to symmetry and a scaling relation, all the measures $\NU_x$ are simple transformations (described below implicitly, see also \cite{KlenkeMytnik2010}, discussion before (5.5)) of the measure $\NU:=\NU_{(1,0)}$. This measure $\NU$ on $E$ can be explicitly described in terms of its Lebesgue densities

\begin{align}\label{E1.14}
			\NU(dy)=\begin{cases}
				 \displaystyle\frac{4}{\pi}\frac{y_1}{(1-y_1)^2(1+y_1)^2}\,dy_1,&\mfalls y_1\geq0,\,y_2=0,\\[4mm]
				 \displaystyle\frac{4}{\pi}\frac{y_2}{(1+y_2^2)^2}\,dy_2,&\mfalls y_1=0,\, y_2\geq0,
			\end{cases}
\end{align}	
on $E$. Properties of $\nu$ are collected in some lemmas in the appendix. The jump structure of the MCB($\infty$) process $X^N$ is described by means of a Poisson point process $\mathcal N$ on $\N\times E\times \R^+\times \R^+$ with intensity measure
		\begin{align*}
			\mathcal N^{\prime}= \ell\otimes \NU\otimes\lambda\otimes\lambda.
		\end{align*}
 Here, $\ell$ denotes the counting measure on $\N$ and $\lambda$ the Lebesgue measure on $\R^+$. In order to describe the intensity of jumps depending on the current state of the system, let
$$
\begin{aligned}
I^N_t(k)&:=I^{N,1}_t(k)+I^{N,2}_t(k)\\&:=\1_{\{X^{N,2}_t(k)>0\}}\frac{\mathcal A^NX^{N,1}_t(k)}{X^{N,2}_t(k)}+\1_{\{X^{N,1}_t(k)>0\}}\frac{\mathcal A^NX^{N,2}_t(k)}{X^{N,1}_t(k)},\quad k\in S^N.		
\end{aligned}
$$
Note that $I^N_t(k)$ is well-defined because either $X^N_t(k)$ is of type $1$, i.e. $X^{N,1}_t(k)> 0$ and $X^{N,2}_t(k)=0$, or of type $2$, i.e. $X^{N,2}_t(k)> 0$ and $X^{N,1}_t(k)=0$. Since the off-diagonal entries of $\CA^N$ are nonnegative, the rates $I^{N,1}_t(k)$, $I^{N,2}_t(k)$ and $I^N_t(k)$ are nonnegative.\\

The jumps of $\mathrm{MCB}(\infty)$ are governed by the function $J:E\times E\to \R^2$
		\begin{align*}
			J\big( y, x\big)=\left(J_1( y, x)\atop J_2( y, x) \right),
		\end{align*}
		where the coordinate jumps
		\begin{align*}
			J_1( y,  x)&=\begin{cases}
					(y_1-1)x_1,&\mfalls  x=\left(x_1\atop 0\right),\\[2mm]
					y_2 x_2,&\mfalls  x=\left(0\atop x_2\right),
				\end{cases}
\end{align*}
and
\phantom{changed J(x,y) to J(y,x)}
\begin{align*}
			J_2( y,  x)&=\begin{cases}
					y_2x_1,&\mfalls  x=\left(x_1\atop 0\right),\\[2mm]
					(y_1-1) x_2,&\mfalls  x=\left(0\atop x_2\right),
				\end{cases}
		\end{align*}
		depend on the state $ x=\left(x_1\atop x_2\right)$ of the system and a point $ y=\left(y_1 \atop y_2\right )$ is chosen from $E$ according to $\NU$. \smallskip
		
		  The system of stochastic equations characterizing $\mathrm{MCB}(\infty)$ on $S^N$ is
		\begin{equation}\label{E1.15}
\begin{aligned}			 X^{N,i}_t(k)&=X^{N,i}_0(k)+\int_0^t\mathcal{A}^NX^{N,i}_s(k)\,ds\\
&\quad+\int_0^t\int_0^{I^N_{s-}(k)}\int_{E}J_i( y,X^N_{s-}(k))\big(\mathcal{N}-\mathcal{N}^{\prime}\big)\big(\{k\},dy,dr,ds\big),
		\end{aligned}\end{equation}
		for $i=1,2$, $k\in S^N$ and $t\geq 0$. The idea is that each coordinate $X^N_t(k)$ experiences a drift towards the mean of all coordinates. In addition, it is a (non-trivial) consequence of the particular form of the jump function $J$ that solutions are forced to remain only at the boundary $E$ of $[0,\infty)^2$: Jumps go from $E$ to $E$ and the compensator cancels with the drift (compare Section~2 of \cite{KM2}). Also note that the $dr$-contribution does not play a role for the jump target but instead only determines the jump \emph{rate} which is proportional to $I^N$.\smallskip

\begin{rem}\label{rr}
To facilitate the understanding of Equation \eqref{E1.15} let us recall the interpretation of the jump mechanism through generalized voter processes instead of types (see \cite{DoeringMytnik2012}): If just before time $t$ voter $k$ has opinion A with a strength of conviction $x_1$, that is $X^N_{t-}(k)=\left (x_1\atop 0\right)$, then with a rate which is the total conviction strength of all neighbors of opposite opinion B relativized by the conviction strength $x_1$ of voter $k$, voter $k$ chooses to reconsider his/her opinion: he/she chooses a new opinion according to $\NU$. If the point $y\in E$ chosen by $\NU$ takes the form $(y_1,0)$ then the opinion of voter $k$ does not change but the strength is multiplied by $y_1$. Conversely, if the chosen point is $(0,y_2)$ then the opinion of voter $k$ changes and the strength is multiplied by $y_2$.  Hence, there are four possible types of jumps:
\begin{itemize}
	\item opinion A $\rightarrow$ opinion A:\\ $\,\,\,\left(x \atop 0\right)\mapsto \left(y_1 x\atop 0\right)=\left(x \atop 0\right)+\left(y_1-1\atop 0\right)x=\left( x\atop 0\right)+J\left(\left(y_1\atop 0\right),\left(x\atop 0\right)\right)$
	\item opinion A $\rightarrow$ opinion B:\\ $\,\,\,\left(x\atop 0\right)\mapsto \left(0\atop y_2x\right)=\left(x\atop 0\right)+\left(-1\atop y_2\right)x=\left(x\atop 0\right)+J\left(\left(0\atop y_2\right),\left(x\atop 0\right)\right)$
	\item opinion B $\rightarrow$ opinion B:\\ $\,\,\,\left(0\atop x\right)\mapsto \left(0 \atop y_1 x\right)=\left(0 \atop x\right)+\left(0\atop y_1-1\right)x=\left(0\atop x\right)+J\left(\left(y_1\atop 0\right),\left(0\atop x\right)\right)$
	\item opinion B $\rightarrow$ opinion A:\\ $\,\,\,\left(0\atop x\right)\mapsto \left(y_2 x\atop 0\right)=\left(0\atop x\right)+\left(y_2\atop -1\right)x=\left(0\atop x\right)+J\left(\left(0\atop y_2\right),\left( 0\atop x\right)\right)$
\end{itemize}
By definition, $\NU$ has infinite mass on the positive part of the $x$-axis with a pole at $\left(1 \atop 0\right)$ whereas the mass of $\NU$ on the positive $y$-axis is finite. This means that in finite time there are infinitely many tiny changes in strength of conviction without changing the opinion whereas there are only finitely many changes of opinion.
\end{rem}
		
	Here is the main theorem for convergence of $\mathrm{MCB}(\gamma)$ to $\mathrm{MCB}(\infty)$:
\setcounter{thm}{-1}
		\begin{thm}[{\cite[Theorem 1.5]{KM2}}]\label{111}
			If $X^N_0\in E^{S^N}$, then \eqref{E1.15} has a unique weak solution $(X^N_t)_{t\geq 0}$ with $X^N_t(k)\in E$ for all $t>0$ and $k\in S^N$. The unique solution of \eqref{E1.15} is called $\mathrm{MCB}(\infty)$ and
			\begin{align*}
				\mathrm{MCB}(\gamma) \stackrel{\gamma\to\infty}{\Longrightarrow} \mathrm{MCB}(\infty)
			\end{align*}
			in the sense of weak convergence in the Meyer-Zheng topology.
		\end{thm}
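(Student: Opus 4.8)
Since this statement is quoted from \cite{KM2}, I only outline the strategy I would follow.

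\emph{Reformulation and well-posedness.} First I would make rigorous sense of \eqref{E1.15} and turn it into a well-posed martingale problem. The delicate feature is the jump measure $\NU$ of \eqref{E1.14}: it has a non-integrable pole at $y=(1,0)$ (and, after the symmetry and scaling transformations, at the analogous points of all the $\NU_x$), so $\int_E |y-(1,0)|\,\NU(dy)=\infty$ and only the \emph{compensated} integral in \eqref{E1.15} is meaningful; moreover the polynomial tail of $\NU$ destroys second moments, so this compensated integral is an $L^p$-martingale only for $p<2$. Away from the pole, however, $\NU$ has finite mass and $\int_E|J(y,x)|\,\NU(dy)<\infty$ on that region, so the opinion-changing and large-strength jumps occur at a locally finite rate. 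I would therefore build a solution by interlacing: between consecutive ``large'' jumps the equation is a finite linear system perturbed by a compensated small-jump integral whose increments are square-integrable once $I^N_{s-}(k)$ is localized; at large jumps one updates according to $\NU$. The non-trivial point---carried out in Section~2 of \cite{KM2}---is that the special form of $J$ makes the compensator of the jump term cancel the drift $\CA^N X^{N,i}$ precisely in the directions transverse to $E$, so that a solution started in $E^{S^N}$ stays in $E^{S^N}$.

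\emph{Uniqueness.} For uniqueness in law I would use the self-duality of mutually catalytic branching. After embedding $E$ into $\mathbb C$ (sending a type-$1$ state of height $a$ to $a$ and a type-$2$ state of height $b$ to $\mathrm i b$, say), the aim is to exhibit a rich enough family of bounded complex-exponential test functions $F(\cdot,\tilde x)$, indexed by a dual configuration $\tilde x$, such that the jump-diffusion generator of \eqref{E1.15} acting on the first variable of $F$ equals the corresponding generator acting on the second variable. Since the dual object is (a version of) a finite-rate mutually catalytic system, which is classically well posed, the duality identity determines $\E[F(X^N_t,\tilde x)]$ for all $t$ and $\tilde x$, hence all finite-dimensional marginals of any solution of \eqref{E1.15}.

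\emph{Convergence of $\mathrm{MCB}(\gamma)$.} For the limit $\gamma\to\infty$ the plan has three steps. First, prove tightness of $(X^{N,\gamma})_{\gamma\ge1}$ in the Meyer--Zheng pseudo-path topology: from \eqref{E1.13}, $\E[X^{N,\gamma}_{t+h}(k)-X^{N,\gamma}_t(k)\mid\CF_t]=\int_t^{t+h}\CA^N X^{N,\gamma}_s(k)\,ds$, whose variation is controlled by the first moment of the process, while $\sup_t\E|X^{N,\gamma}_t(k)|$ is bounded uniformly in $\gamma$ because $\CA^N$ conserves total mass; together with the uniform-in-$\gamma$ $L^p$ bounds ($p<2$) from \cite{DawsonPerkins1998,KlenkeMytnik2010} this gives the Meyer--Zheng tightness criterion. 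Second, identify limit points: pass to the limit in the martingale characterization of \eqref{E1.13}; as $\gamma\to\infty$ each coordinate is driven instantaneously to $E$, and the diffusion \eqref{E1.13} being isotropic its exit law from the quadrant is the harmonic measure $Q_x$ of planar Brownian motion, whose infinitesimal version near $E$ is exactly $\NU_x$, so the branching martingale of \eqref{E1.13} converges after compensation to the jump martingale in \eqref{E1.15} and every weak limit point solves the $\mathrm{MCB}(\infty)$ martingale problem. Third, combine this with the uniqueness just proved to upgrade tightness plus limit-point identification to convergence $\mathrm{MCB}(\gamma)\Rightarrow\mathrm{MCB}(\infty)$.

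\emph{Main obstacle.} I expect the hard part to be the simultaneous handling of the divergent first moment of $\NU$ at its pole and of the state-dependent rate $I^N$, which itself divides $\CA^N X^{N,i}$ by a possibly vanishing coordinate: one must show both that the compensated integral in \eqref{E1.15} is well defined and that its compensator genuinely pins the solution to $E$, and---in the scaling limit---that no mass escapes in the passage from the diffusive description \eqref{E1.13} to the singular jump description \eqref{E1.15}. Verifying the self-duality directly in this infinite-rate, infinite-activity regime, rather than importing it from the finite-rate theory, is where I expect most of the technical effort to lie.
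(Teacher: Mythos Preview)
The paper does not prove this theorem at all: it is quoted verbatim from \cite[Theorem~1.5]{KM2} and used as a black box, so there is no ``paper's own proof'' to compare against. You correctly recognize this at the outset. Your outline---construction by interlacing large and small jumps and checking that the compensator pins the process to $E$, uniqueness via the self-duality inherited from finite-rate mutually catalytic branching, and convergence of $\mathrm{MCB}(\gamma)$ via Meyer--Zheng tightness plus identification of limit points through the duality---is a faithful high-level sketch of the strategy carried out in \cite{KM2}, and your identification of the main technical obstacles (the non-integrable pole of $\NU$ at $(1,0)$, the state-dependent rate $I^N$ with a vanishing denominator, and the lack of second moments) is accurate.
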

				The claimed universality of MCB($\infty$) was also established in Theorem 1.5 of \cite{KM2}: In fact, the diffusion function is not necessarily $g(u,v)=uv$ as for mutually catalytic branching. The diffusion function only needs to vanish on $E$ and be positive on $(0,\infty)^2$.		
\subsection{Our Results}
\label{S1.4}
We will establish a finite systems scheme for MCB($\infty$) in the sense of \eqref{E1.09} and \eqref{E1.10} with $\beta^N=N/\log N$ as in~\eqref{E1.12}
and with $P^g$ replaced by the semigroup $P^{8/\pi}$ of MCB($8/\pi$). Since the major part of the work is proving convergence of the average process, we formulate this statement in a separate theorem first.

Let
$$Z^{N,i}_t:=\frac{1}{N}\sumkS   X^{N,i}_t(k),\qquad i=1,2,\,k\in S^N,\,t\geq0,$$
and $\bZ^N_t:=(Z^{N,1}_t,Z^{N,2}_t)$
be the average processes.
Define the time-rescaled processes
\begin{equation}
\label{E1.16}
\rX^{N,i}_t(k):=X^{N,i}_{\beta^Nt}(k),\qquad k\in S^N,\,i=1,2,\,t\geq0,
\end{equation}
and $\brZ^N_t:=\bZ^N_{\beta^Nt},$ where $\beta^N$ is given by~\eqref{E1.12}.
We will also write
$\rI^{N}_t(k):=I^N_{\beta^Nt}(k)$ for the scaled jump rates.

\begin{thm}\label{T1}
	Suppose $X^{N}$ is $\mathrm{MCB}(\infty)$ on $S^N$ with mean-field interaction $\mathcal A^N$.
	Assume that $\sup_{N\in \N} \E[(Z^{N,i}_0)^2]<\infty$ for $i=1,2$ and $\brZ^{N}_0\Rightarrow z\in \R_+^2$ as $N\to\infty$. Furthermore, assume that there exists a $p\in(1,2)$ such that
\begin{equation}\label{E1.18}
C_p:=1+\sup_{N\in\N,\,i=1,2}\frac1N\sumkS  \E\big[(X^{N,i}_0(k))^p\big]\,<\infty.\end{equation}
Then
	\begin{align*}
		\big(\brZ^N_t\big)_{t\geq 0}\wlimN(\bZ_t)_{t\geq 0},
	\end{align*}
	weakly in the Skorokhod space on $\R_+^2$. Here, $\bZ=(Z^1 , Z^2)$ takes values in $\R_+^2$ and is the unique strong solution of
	\begin{align}\label{E1.19}
	dZ^i_t=\sqrt{\gamma^*\, Z^1_t \, Z^2_t}\,dB_t^i,\qquad i=1,2,\,t\geq0,
	\end{align}
	driven by independent Brownian motions $B^1, B^2$ with initial condition $\bZ_0=z$. The branching rate is $\gamma^*=\frac{8}{\pi}$.
\end{thm}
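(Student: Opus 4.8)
\emph{Strategy of the proof.} The plan is to run the classical two-step scheme---tightness of $(\brZ^N)$ followed by identification of every limit point via a well-posed martingale problem---while circumventing the fact that $\mathrm{MCB}(\infty)$ has no second moments, so that $\brZ^N$ is \emph{not} an $L^2$ martingale. The way around this is to split the driving Poisson noise into small and large jumps: the large jumps turn out to be negligible, and the predictable bracket of the small-jump part carries a logarithmic divergence which is exactly cancelled by the time change $\beta^N=N/\log N$, producing $\gamma^*=8/\pi$. Write $\rZ^{N,i}_t:=Z^{N,i}_{\beta^Nt}$ for the two coordinates of $\brZ^N$. Since $\CA^N$ is conservative, averaging \eqref{E1.15} over $k\in S^N$ annihilates the migration drift, so each $\rZ^{N,i}$ is a purely discontinuous local martingale whose jump at rescaled time $s$ on site $k$ is the compensated jump of $N^{-1}J_i\big(y,\rX^N_{s-}(k)\big)$ at rate $\rI^N_{s-}(k)$. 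The limit SDE \eqref{E1.19} is characterised (uniquely; cf.\ the references quoted before Theorem~\ref{111}) by the martingale problem: $\bZ=(Z^1,Z^2)$ is $\R_+^2$-valued and continuous, each $Z^i$ a martingale with $\langle Z^i\rangle_t=\gamma^*\int_0^tZ^1_sZ^2_s\,ds$, and $\langle Z^1,Z^2\rangle\equiv0$. So it suffices to establish (a) tightness of $(\brZ^N)$ in Skorokhod space, (b) continuity of limit points, and (c) these relations in the limit with $\gamma^*=8/\pi$.

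Two uniform estimates, where the hypotheses enter, carry the argument. First, one propagates the $p$-th moment bound \eqref{E1.18}: applying the $L^p$ Burkholder--Davis--Gundy inequality (available because $p\in(1,2)$) to $(X^{N,i}_t(k))^p$ via \eqref{E1.15}, the compensated jump integral produces the integrals $\int_E\big(y_1^p-1-p(y_1-1)\big)\,\NU(dy)$ and $\int_Ey_2^p\,\NU(dy)$, both finite precisely because $1<p<2$ (the pole of $\NU$ at $(1,0)$ forces $p>1$, its tail forces $p<2$); with mean reversion of the migration term and Gronwall this yields
\[
\sup_{N\in\N}\ \sup_{t\le\beta^NT}\ \frac1N\sumkS\E\big[(X^{N,i}_t(k))^p\big]<\infty,
\]
hence $\sup_N\sup_{t\le T}\E[|\rZ^{N,i}_t|^p]<\infty$, so each $\rZ^{N,i}$ is a true, $[0,T]$-uniformly integrable martingale. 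Second---and this is what keeps the noise estimates below under control---the cross bracket of $\bZ^N$ is \emph{nonpositive}: every jump of $\bZ^N$ lowers one coordinate and raises the other, so $\langle Z^{N,1},Z^{N,2}\rangle$ is nonincreasing, whence (after the standard localisation and Fatou) $\E[Z^{N,1}_tZ^{N,2}_t]\le\E[Z^{N,1}_0Z^{N,2}_0]\le\tfrac12\sum_{i}\E[(Z^{N,i}_0)^2]$ uniformly in $N$ and $t\ge0$.

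Now the small/large split. Decompose the Poisson noise of $\rZ^{N,i}$ on each site according to whether the induced jump of $\rZ^{N,i}$ has modulus $\le1$ or $>1$, giving $\rZ^{N,i}=\rZ^{N,i,\le}+\rZ^{N,i,>}$. Both half-axes of $\NU$ have tails of order $(\text{distance})^{-2}$; combining this with $\rI^N_s(k)\,\rX^{N,j}_s(k)=\rZ^{N,3-j}_s$ on a site $k$ of type $j$, with $\sumkS\rX^{N,j}_s(k)=N\rZ^{N,j}_s$, and with $\beta^N=N/\log N$, one finds that for each fixed $a>0$ the expected number of jumps of $\rZ^{N,i}$ on $[0,T]$ of modulus $>a$ is at most $cTa^{-2}(\log N)^{-1}\sup_{r\le\beta^NT}\E[Z^{N,1}_rZ^{N,2}_r]\to0$, and likewise that the $\mathcal H^p$-norm and the predictable drift of $\rZ^{N,i,>}$ both tend to $0$. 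Hence $\rZ^{N,i,>}\to0$ uniformly in probability and the largest jump of $\rZ^{N,i}$ over $[0,T]$ tends to $0$ in probability, which gives (b). For the small part, the predictable bracket is $\frac{\beta^N}{N^2}\sumkS\int_0^{\,\cdot}\rI^N_s(k)\int_{\{|J_i|\le N\}}J_i\big(y,\rX^N_s(k)\big)^2\,\NU(dy)\,ds$ with $\beta^N/N^2=1/(N\log N)$. On a site $k$ of type $j$ the cutoff restricts the two half-axes to $|y_1-1|\le N/\rX^{N,j}_s(k)$ and $y_2\le N/\rX^{N,j}_s(k)$, and using the elementary asymptotics from \eqref{E1.14}, $\int_{\{y_2=0,\,y_1\le M\}}(y_1-1)^2\,\NU(dy)=\tfrac4\pi\log M+O(1)$ and $\int_{\{y_1=0,\,y_2\le M\}}y_2^2\,\NU(dy)=\tfrac4\pi\log M+O(1)$, one obtains
\[
\int_{\{|J_i|\le N\}}J_i\big(y,\rX^N_s(k)\big)^2\,\NU(dy)=\rX^{N,j}_s(k)^2\Big(\tfrac4\pi\log\big(N/\rX^{N,j}_s(k)\big)+O(1)\Big).
\]
Multiplying by $\rI^N_s(k)$, using $\rI^N_s(k)\rX^{N,j}_s(k)^2=\rZ^{N,3-j}_s\rX^{N,j}_s(k)$, summing over $k$ split by type with $\sumkS\rX^{N,j}_s(k)=N\rZ^{N,j}_s$, and invoking $\sumkS\rX^{N,i}_s(k)\,\big|\log\rX^{N,i}_s(k)\big|=O(N)$ (from the $p$-th moment bound, since $|x\log x|\le C(1+x^p)$), all the $\log\rX^{N,j}$ corrections die, the type-$1$ and type-$2$ sites each contribute a factor $\tfrac4\pi\rZ^{N,1}_s\rZ^{N,2}_s$, and $\langle\rZ^{N,i,\le}\rangle_t\to\tfrac8\pi\int_0^tZ^1_sZ^2_s\,ds$ along convergent subsequences. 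The analogous computation for the cross term gives $\langle\rZ^{N,1,\le},\rZ^{N,2,\le}\rangle_t\to0$, because there the relevant per-site integrals $\int y_2\,\NU(dy)$ are finite and no logarithm is created.

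Finally, tightness of $(\brZ^N)$ follows from Rebolledo's criterion applied to $\rZ^{N,i,\le}$---its predictable bracket, being absolutely continuous, is $C$-tight once controlled (after localising on $\{Z^{N,1}+Z^{N,2}\le M\}$) by the estimates of the second paragraph---together with $\rZ^{N,i,>}\to0$. By (b) every subsequential limit $\bZ$ is continuous and $\R_+^2$-valued; passing to the limit in the martingale property (using the $[0,T]$-uniform integrability) and in the brackets shows each $Z^i$ is a continuous martingale with $\langle Z^i\rangle_t=\tfrac8\pi\int_0^tZ^1_sZ^2_s\,ds$ and $\langle Z^1,Z^2\rangle\equiv0$; well-posedness of this martingale problem (equivalently strong uniqueness for \eqref{E1.19}) identifies $\bZ$ as $\mathrm{MCB}(8/\pi)$, so the whole sequence converges and Theorem~\ref{T1} follows. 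I expect the main difficulty to be making the last two paragraphs rigorous: propagating the empirical $p$-th moments through the state-dependent, coordinate-coupling rates $I^N$ (whose $1/X^{N,i}$ factors make them sensitive to small coordinate values) while carrying the localisation throughout, and tracking the logarithm exactly enough that its cancellation against $\beta^N=N/\log N$ yields precisely $\gamma^*=8/\pi$ rather than merely some $O(1)$ constant.
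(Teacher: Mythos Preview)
Your proposal follows essentially the same architecture as the paper's proof---small/large jump split, the logarithmic asymptotics of $\nu$ on both half-axes producing $\tfrac4\pi\log N$ from each site type, and the cancellation against $\beta^N=N/\log N$ yielding $\gamma^*=8/\pi$---so the core idea is right. The technical packaging differs: the paper uses Aldous's criterion for tightness via an $L^{p_N}$ estimate with the specific choice $p_N=2-\tfrac{1}{\log N}$ (so that $N^{2-p_N}/((2-p_N)\log N)=e$ exactly), and then identifies the limit through convergence of the semimartingale characteristic triplet in the Jacod--Shiryaev framework, rather than Rebolledo plus a direct martingale-problem argument. Both routes are viable; the $p_N$ trick is a clean way to avoid localisation in the tightness step.

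Two points deserve care. First, your justification for $\E[Z^{N,1}_tZ^{N,2}_t]\le\E[Z^{N,1}_0Z^{N,2}_0]$ is not quite right: it is \emph{not} true that every jump of $\bZ^N$ lowers one coordinate and raises the other---jumps coming from the $y_1$-half-axis of $\nu$ change only one coordinate. The conclusion still holds (the remaining jumps do have nonpositive coordinate products, so $[Z^{N,1},Z^{N,2}]$ is nonincreasing), and the paper obtains it instead from a mixed-moment bound for MCB($\infty$) proved in \cite{KM2}. Second, your proposed propagation of the empirical $p$-th moment by BDG and Gronwall is the least robust part of the sketch: the drift in \eqref{E1.15} couples all coordinates, and the state-dependent rate $I^N$ makes a direct Gronwall loop delicate. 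The paper sidesteps this entirely by using the explicit harmonic-measure representation $\E[(X^{N,i}_t(k))^p]=\E[\int_E y_i^p\,Q_{(e^{t\CA^N}X^N_0)(k)}(dy)]$ from \cite{KlenkeOeler2010}, together with the closed-form bound $\int_E y_i^p\,Q_x(dy)\le \tfrac{2}{2-p}(x_1^p+x_2^p)$; this gives the uniform $x\log x$ control in one line and is worth adopting.
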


Now we come to the formulation of the finite systems scheme. Denote by $(P^{8/\pi}_t)_{t\geq0}$ the semigroup of MCB($8/\pi$), that is,
$$P^{8/\pi}_t(z,dz')=\P[\bZ_t\in dz'\,|\, \bZ_0=z]$$
with $\bZ$ from Theorem~\ref{T1}. As an analogue to \eqref{E1.06}, we consider the (unique strong) solution in $[0,\infty)^2$ of the equation
\begin{equation}
\label{E1.20}
dY^{\theta,\gamma,i}_t=\big(\theta^i-Y^{\theta,\gamma,i}_t\big)dt+\sqrt{\gamma Y^{\theta,\gamma,1}_tY^{\theta,\gamma,2}_t}\,dW^i_t,\qquad i=1,2,\,t\geq0,
\end{equation}
where $W^1$, $W^2$ are two independent Brownian motions and $\theta=(\theta^1,\theta^2)\in[0,\infty)^2$.
It was shown in \cite[Theorem 1.3]{KlenkeMytnik2010} that the limit $Y^\theta$ of $Y^{\theta,\gamma}=(Y^{\theta,\gamma,1},Y^{\theta,\gamma,2})$ as $\gamma\to\infty$ exists in the sense of weak convergence of finite dimensional distributions. Furthermore, the transition semigroup can be computed explicitly
\begin{equation}
\label{E1.21}
\P[Y^\theta_t\in dy'|Y^\theta_0=y]=Q_{e^{-t}y+(1-e^{-t})\theta}(dy').
\end{equation}
The invariant distribution of $Y^\theta$ is the harmonic measure $Q_\theta$ of planar Brownian motion on $(\R_+)^2$ started at $\theta$. Furthermore, let $\check Q_\theta$ denote the law of the stationary process, that is of $Y^\theta$ started with initial distribution $Q_\theta$. Recall that $\beta^N=N/\log N$.
\begin{thm}
\label{T2}
Under the assumptions of Theorem~\ref{T1}, we have,
in the sense of weak convergence of the finite dimensional distributions: for any $t>0$,
(i)
\begin{equation}
\label{E1.22}
\CL\big((\brZ^N_t,(\rX^N_t(k):k\in S^N))\big)\limN \int P^{8/\pi}_t(z,dz')\left(\delta_{z'}\otimes Q_{z'}^{\otimes\N}\right)
\end{equation}
(ii) and
\begin{equation}
\label{E1.23}
\CL\big((\bZ^N_{\beta^Nt},((X^N_{\beta^Nt+s}(k))_{s\geq0}:k\in S^N))\big)\limN \int P^{8/\pi}_t(z,dz')\left(\delta_{z'}\otimes \check Q_{z'}^{\otimes\N}\right).
\end{equation}
\end{thm}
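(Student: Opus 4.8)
\emph{Strategy.} The proof exploits the separation of time scales in the classical way. Fix a buffer $\eps\in(0,t)$, put $s_0:=\beta^N(t-\eps)$ and $\zeta^N:=\brZ^N_{t-\eps}=\bZ^N_{s_0}$, and condition on $\CF^N_{s_0}$. On the unrescaled window $[s_0,\beta^N t]$ two things happen: the empirical mean $Z^N_\cdot$ barely leaves $\zeta^N$ — indeed $\sup_{u\in[t-\eps,t]}|\brZ^N_u-\zeta^N|\to\sup_{u\in[t-\eps,t]}|\bZ_u-\bZ_{t-\eps}|$ in law by Theorem~\ref{T1} and continuous mapping, which is $O(\sqrt\eps)$ in probability — while each coordinate runs for the unrescaled time $\beta^N\eps\to\infty$, long enough to forget its state at time $s_0$ and to equilibrate under the nearly frozen parameter $\zeta^N$. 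I will make the equilibration and the decoupling precise, combine them with Theorem~\ref{T1} through a tower-property argument, and finally send $\eps\downarrow0$, using weak continuity of $\theta\mapsto Q_\theta$ and of $\theta\mapsto\check Q_\theta$ together with path continuity of $\bZ$ to pass from $Q_{\zeta^N}\approx Q_{\bZ_{t-\eps}}$ to the mixture $\int P^{8/\pi}_t(z,dz')\big(\delta_{z'}\otimes Q_{z'}^{\otimes\N}\big)$ appearing in \eqref{E1.22}--\eqref{E1.23}.

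\emph{Equilibration of a single coordinate.} For each $k$, rewriting $\CA^N X^{N,i}_u(k)=Z^{N,i}_u-X^{N,i}_u(k)$ exhibits $X^N(k)$ on $[s_0,\beta^N t]$ as a single-site version of \eqref{E1.15} driven by the Poisson noise $\mathcal N(\{k\},\cdot)$, in which the \emph{time-varying} parameter $Z^N_\cdot$ enters the drift and the thinning rate $I^N(k)$. I would couple it, on the same window and with the same Poisson noise (only the $dr$-threshold changes), with the process $Y^{\zeta^N,k}$ started from $X^N_{s_0}(k)$ that solves the frozen-parameter single-site equation with parameter $\zeta^N$ — i.e.\ the $\gamma\to\infty$ limit $Y^{\zeta^N}$ of \eqref{E1.20} as in \cite[Theorem 1.3]{KlenkeMytnik2010}, realised through a jump SDE analogous to \eqref{E1.15}, with transition kernel \eqref{E1.21}. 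Then: (a) by \eqref{E1.21}, $\CL\big(Y^{\zeta^N,k}_{\beta^N t}\mid\CF^N_{s_0}\big)=Q_{e^{-\beta^N\eps}X^N_{s_0}(k)+(1-e^{-\beta^N\eps})\zeta^N}$, which converges to $Q_{\zeta^N}$ since $\beta^N\eps\to\infty$, regardless of the starting value; (b) given $\CF^N_{s_0}$ the processes $Y^{\zeta^N,k}$, $k\le m$, are independent (independent driving noises, common $\CF^N_{s_0}$-measurable parameter), and by time homogeneity their post-$\beta^N t$ paths converge in law to the stationary law $\check Q_{\zeta^N}$; (c) the coupling error $\E\big[|X^N_{\beta^N t}(k)-Y^{\zeta^N,k}_{\beta^N t}|^p\,\big|\,\CF^N_{s_0}\big]$, with $p\in(1,2)$ as in \eqref{E1.18}, is controlled — this is the technical heart, see below — by a constant times $\big(\sup_{u\in[t-\eps,t]}|\brZ^N_u-\zeta^N|\big)^p$ plus terms tending to $0$ with $N$, hence small once $\eps$ is small and $N$ large; the analogous bound holds on $[\beta^N t,\infty)$.

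\emph{Assembling the finite dimensional limit.} Take $F\in C_b(\R_+^2)$ and $\phi_1,\dots,\phi_m\in C_b(E)$ (for part (ii), $\Phi_1,\dots,\Phi_m$ bounded continuous on $D([0,\infty),E)$). By step (c) I may replace each $\phi_j(\rX^N_t(j))$ by $\phi_j(Y^{\zeta^N,j}_{\beta^N t})$ up to an error vanishing with $N$. Moreover $\brZ^N_t=\frac1N\sum_k X^N_{\beta^N t}(k)$ depends on the noises of the sites $j\le m$ only through their $O(m/N)$ contribution to $Z^N_\cdot$, so the same stability estimate of step (c) lets me replace $\brZ^N_t$ by a version $\brZ^{N,\flat}_t$ which is, given $\CF^N_{s_0}$, independent of $(Y^{\zeta^N,j})_{j\le m}$, again up to $o(1)$. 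Using this independence, the tower property, and that $\zeta^N=\brZ^N_{t-\eps}$ is $\CF^N_{s_0}$-measurable,
\[
\E\Big[F(\brZ^N_t)\prod_{j=1}^m\phi_j(\rX^N_t(j))\Big]=\E\Big[F(\brZ^N_t)\prod_{j=1}^m\langle Q_{\zeta^N},\phi_j\rangle\Big]+o(1)=\E\big[F(\brZ^N_t)\,h(\brZ^N_{t-\eps})\big]+o(1),
\]
where $h(\zeta):=\prod_{j=1}^m\langle Q_\zeta,\phi_j\rangle$ is bounded and continuous. By Theorem~\ref{T1} (joint Skorokhod convergence $\brZ^N\Rightarrow\bZ$, with $t$ and $t-\eps$ continuity points of $\bZ$ almost surely), the right-hand side converges to $\E[F(\bZ_t)\,h(\bZ_{t-\eps})]$. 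Letting $\eps\downarrow0$ and using path continuity of $\bZ$ with weak continuity of $\theta\mapsto Q_\theta$ gives $\E[F(\bZ_t)\prod_j\langle Q_{\bZ_t},\phi_j\rangle]$, which is the integral $\int P^{8/\pi}_t(z,dz')\,F(z')\prod_j\langle Q_{z'},\phi_j\rangle$ of \eqref{E1.22}. Part (ii) is obtained verbatim, with $Q$ replaced by $\check Q$ and $\phi_j(\rX^N_t(j))$ by $\Phi_j\big((X^N_{\beta^N t+s}(j))_{s\ge0}\big)$, using the path version of step (c).

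\emph{Main obstacle.} Everything reduces to the $L^p$ coupling and stability estimates of step (c): replacing the fluctuating empirical mean $Z^N_\cdot$ in the drift and in the rate $I^N(k)$ by the frozen $\zeta^N$, or perturbing it by $O(m/N)$, must cost only $o(1)$ in $L^p$, uniformly in $N$, with $p\in(1,2)$. This is precisely the difficulty that forces the logarithmic correction $\beta^N=N/\log N$: the MCB($\infty$) coordinates have no second moment, the jump measure $\NU$ has a non-integrable pole at $(1,0)$, and $I^N(k)$ carries a denominator equal to a coordinate of $X^N(k)$, which can be arbitrarily small near the corner of $E$. Establishing the required Gronwall-type bound at the $L^p$ level therefore relies on the same fine estimates on $\NU$ (the appendix lemmas) and the single-site/system comparison that already underlie the proof of Theorem~\ref{T1}. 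Minor further points: the degeneracy $Q_\theta=\delta_\theta$ for $\theta\in E$ (harmless, and consistent with \eqref{E1.21}), and making the conditioning on the random $\zeta^N$ rigorous, which can be done by a Skorokhod representation or by first proving the estimates uniformly for $\zeta^N$ in compact subsets of $\R_+^2$.
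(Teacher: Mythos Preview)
Your strategy is structurally sound but genuinely different from the paper's, and the point you yourself flag as the ``main obstacle'' is a real gap, not just a technicality.

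The paper does \emph{not} couple individual coordinates with frozen-parameter single-site processes. Instead it uses an \emph{approximate self-duality}: for the convergence-determining functions $F(x,y)=\exp(x\mtimes y)$ (lozenge product), Lemma~3.1 gives an exact formula
\[
\E\Big[\prod_j F\big(X^N_{t+s}(k_j),y(j)\big)\,\Big|\,\CF_t\Big]
=\prod_j F\big(\theta,y(j)\big)\,F\big(X^N_t(k_j)-\theta,e^{-s}y(j)\big)+R
\]
where the remainder $R$ is an integral whose integrand is bounded by a constant times $e^{r-s}\big|\bZ^N_{t+r}-\theta\big|$. Taking $\theta=\bZ^N_{\beta^Nt-u_N}$ with the short buffer $u_N=2\log N$ (not $\beta^N\eps$), the first factor forgets the initial state because $e^{-u_N}N\to0$, and the remainder is controlled directly by $\sup_{r\in[0,u_N]}\big|\bZ^N_{\beta^Nt-u_N+r}-\bZ^N_{\beta^Nt-u_N}\big|\to0$ via $C$-tightness and the $L^{p_N}$ bound of Lemma~2.9. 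Part (ii) is obtained by iterating the duality. No coupling, no $L^p$ stability, no Gronwall; the error is \emph{explicitly} $|\bZ^N-\theta|$ thanks to the algebraic identity underlying $F$.

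Your step~(c), by contrast, asks for a uniform-in-time $L^p$ stability estimate for the single-site jump SDE under perturbation of the parameter appearing in \emph{both} the drift and the thinning threshold $I^N(k)=Z^{N,3-i}/X^{N,i}(k)$. Two issues make this hard: (i) the denominator in $I^N(k)$ is the current state, so the jump rate is unbounded near the corner of $E$; a naive Gronwall from the coupled SDEs therefore does not close. (ii) Your window has length $\beta^N\eps\to\infty$, so any Gronwall constant that grows with time is useless; you would need a genuine contraction/exponential ergodicity for the coupled pair, not just for the marginal law (where \eqref{E1.21} indeed gives $e^{-t}$ decay of the initial condition). Neither of these is provided by the estimates behind Theorem~\ref{T1}, which concern the \emph{averaged} quantities and crucially exploit cancellations across sites. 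In short, the ``same fine estimates on $\NU$'' do not deliver the pathwise coordinate-level stability you need; this is exactly what the duality identity replaces.
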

Theorem~\ref{T2} shows that in fact a mean field finite systems scheme in the of \eqref{E1.09} and \eqref{E1.10} holds.

\subsection{Outline}
\label{S1.5}
The proof of Theorem~\ref{T1} follows a general strategy:
\begin{itemize}
\item[(i)]
Prove tightness of the sequence $(\brZ^N)_{N\in \N}$;
\item[(ii)]Prove that any limit point of $(\brZ^N)_{N\in \N}$ is a weak solution of the SDE \eqref{E1.19} with $\gamma^*=\frac{8}{\pi}$;
\item[(iii)] Prove that all limit points are equal.
\end{itemize}
Step (i) is carried out by fine moment estimates, (ii) is proved using the method of characteristics for semimartingales and  (iii) is a consequence of (ii) and the strong uniqueness of solutions to the SDE \eqref{E1.19}.\smallskip

The proof of Theorem~\ref{T2} makes use of an approximate duality of MCB($\infty$) to some deterministic process in order to compare the coordinate processes of $X^N$ with $Y^\theta$ from \eqref{E1.21}.\medskip

The article is organized as follows: In Section~\ref{S2}, the proof of Theorem \ref{T1} is given: We start with a rough heuristics in Section~\ref{S2.1}. Auxiliary moment estimates are gathered in Section~\ref{S2.3}, tightness arguments are given in Section~\ref{S2.4} and the final convergence proof is given in Section~\ref{S2.5}. In order to make the article more accessible to the reader not familiar with the general theory of semimartingales, definitions of semimartingale characteristics are recalled in Section~\ref{S2.2}. Finally, in Section~\ref{S3}, we establish the approximate duality to some deterministic process and prove Theorem~\ref{T2}.\\

\textbf{Notation.} Throughout this article, $C$ denotes a generic constant that can vary from line to line.


\section{Proof of Theorem~\ref{T1}}

\label{S2}
\subsection{Heuristics}
\label{S2.1}
In this subsection we give a rough and oversimplified idea why $\beta^N=N/\log(N)$ is the right scaling and why the limiting process is the mutually catalytic branching process with rate $8/\pi$.
Assume that $N$ is even and that
$$\rX^N_{t}(k)=\begin{cases}
\big(2\rZ^{N,1}_t,0\big)&\mfalls k\leq N/2,\\[2mm]
\big(0,2\rZ^{N,2}_t\big)&\mfalls k> N/2.
\end{cases}
$$
That is, at time $\beta^N t$, the MCB($\infty$) process is such that
\begin{itemize}
\item
type 1 is constant on sites $k\leq N/2$,
\item
type 2 is constant on sites $k> N/2$.
\end{itemize}
 We next argue that large jumps disappear in the limit whereas small jumps lead to a quadratic variation part including our factor $8/\pi$. As explained in Remark \ref{rr} there are different sorts of jumps: big, small, no change of types and change of types. Analyzing their effects separately explains the limiting process. By symmetry, it is enough to consider the changes in the first coordinate $\rZ^{N,1}_t$.\\

\textbf{Large Jumps; case 1.} Jumps of $\rZ^{N,1}_t$ of size larger than $\varepsilon$ due to a jump at some coordinate $k\leq N/2$ (no change of type).

The jumps of $\rZ^{N,1}$ of size $\varepsilon$ at time $t$ are due to all jumps of $X^N$ of size $\varepsilon N$ at all sites $k\leq N/2$. We calculate the rate of such jumps: plugging the definition of $\mathcal A^N$ into the definition of $I^N$, multiplying with $N/\log(N)$ for the time-scaling, multiplying by $N/2$ since there are $N/2$ possibilities to have such a jump and, finally, multiplying by the mass of the intensity measure on $E$ so that such a jump occurs gives the total jump rate
$$\begin{aligned}
 \frac{N}{\log N}&\;\frac{N}{2}\;\frac{\rZ^{N,2}_t}{2\rZ^{N,1}_t}\;
\nu\big(\big\{y:\,|y_1-1|>\ve N/(2\rZ^{N,1}_t)\big\}\big)\\
&\leq \frac{N}{\log N}\;\frac{N}{2}\;\frac{\rZ^{N,2}_t}{2\rZ^{N,1}_t}\;\frac2\pi(2\rZ^{N,1}_t/(\ve N))^2\\
&= \frac{1}{\log N}\,\frac{1}{\ve^2}\,\frac2\pi \,\rZ^{N,1}_t\rZ^{N,2}_t\limN0,
\end{aligned}
$$
where the inequality for $\nu$ comes from the appendix and for the last convergence to zero we used the stochastic boundedness of  the sequences $\rZ^{N,i}_t\,, i=1,2$. The stochastic boundedness is a consequence of tightness in $N$ which is proved before using moment bounds.

\textbf{Large Jumps; case 2.} Jumps of $\rZ^{N,1}$ of size larger than $\varepsilon$ at time $t$ due to a jump at some coordinate $k> N/2$ (implying a change of type from 2 to 1).

The jump rate for such jumps is calculated and estimated as above:
$$\begin{aligned}
\frac{N}{\log N}&\;\rZ^{N,1}_t\;\frac{N}{2}\;\frac{1}{2\rZ^{N,2}_t}\;
\nu\big(\big\{y:\,y_2>\ve N/(2\rZ^{N,2}_t)\big\}\big)\\
&\leq \frac{N}{\log N}\;\rZ^{N,2}_t\;\frac{N}{2}\;\frac{1}{2\rZ^{N,2}_t}\;\frac2\pi\big(2\rZ^{N,2}_t/(\ve N)\big)^2\\
&= \frac{1}{\log N}\,\frac{1}{\ve^2}\,\frac2\pi \,\rZ^{N,1}_t\rZ^{N,2}_t\limN0.
\end{aligned}
$$

\textbf{Quadratic Variation; case 1.} The quadratic variation of $\rZ^{N,1}$ at time $t$ due to jumps of size $\leq \ve$ originating from jumps of $X^N$ of size $\leq \ve N$ at sites $k\leq N/2$ grows at rate (compare Lemma~\ref{LA.4} for the asymptotic equivalence)
$$
\begin{aligned}
\frac{N}{\log N}&\;\frac{N}{2}\;\frac{\rZ^{N,2}_t}{2\rZ^{N,1}_t}\;\int\limits_{\{|y_1-1|<\ve N/(2\rZ^{N,1}_t)\}}(y_1-1)^2\left(\frac{2\rZ^{N,1}_t}{N}\right)^2\,\nu(dy)\\
&=\frac{1}{\log N}\rZ^{N,1}_t\rZ^{N,2}_t\int\limits_{\{|y_1-1|<\ve N/(2\rZ^{N,1}_t)\}}(y_1-1)^2\,\nu(dy)\\
&\stackrel{N\to\infty}{\sim}\frac{1}{\log N}\rZ^{N,1}_t\rZ^{N,2}_t\frac4\pi\log\big(\ve N/(2\rZ^{N,1}_t)\big)\\
&\limN\frac4\pi \rZ^{N,1}_t\rZ^{N,2}_t.
\end{aligned}$$
\textbf{Quadratic Variation; case 2.} The quadratic variation of $\rZ^{N,1}$ at time $t$ due to jumps of size $\leq \ve$ originating from jumps of $X^N$ of size $\leq \ve N$ at sites $k> N/2$  grows at rate (compare Lemma~\ref{LA.3} for the asymptotic equivalence)
$$
\begin{aligned}
\frac{N}{\log N}&\;\frac{N}{2}\;\frac{\rZ^{N,1}_t}{2\rZ^{N,2}_t}\;\int\limits_{\{y_2<\ve N/(2\rZ^{N,2}_t)\}}y_2^2\left(\frac{2\rZ^{N,2}_t}{N}\right)^2\,\nu(dy)\\
&=\frac{1}{\log N}\rZ^{N,1}_t\rZ^{N,2}_t\int\limits_{\{y_2<\ve N/(2\rZ^{N,2}_t)\}}y_2^2\,\nu(dy)\\
&\stackrel{N\to\infty}{\sim}\frac{1}{\log N}\rZ^{N,1}_t\rZ^{N,2}_t\frac4\pi\log\big(\ve N/(2\rZ^{N,2}_t)\big)\\
&\limN\frac4\pi \rZ^{N,1}_t\rZ^{N,2}_t.
\end{aligned}$$

Summing up, we see that asymptotically there are no jumps of size $\geq\ve$ and the square variation grows at rate $\frac8\pi \rZ^{N,1}_t\rZ^{N,2}_t$. However, this is exactly the characterization of MCB($8/\pi$). In the next subsections we make this reasoning precise and rigorous by applying general semimartingale theory.

\subsection{Semimartingale Setup}
\label{S2.2}
The proof of Theorem~\ref{T1} is based on general limit theorems for semimartingales that can be found in \cite{JacodShiryaev2003}. For convenience, we collect here some basic facts and definitions. All processes are defined on the state space
	\begin{align*}
		\mathbb{D}=\Big\{ \alpha:\R_+\to \R_{+}\times \R_{+}\,,\, \alpha \text{ right-continuous with left limits}\Big\}
	\end{align*}
	equipped with the Skorohod topology. For definitions and properties, the reader might consult Chapter~VI of \cite{JacodShiryaev2003}. Since all appearing processes are semimartingales, we can use criteria for convergence based on the semimartingale characteristic triplet. In order to describe the triplet of a two-dimensional semimartingale, let $h=\left(h_1\atop h_2\right):\R^2\to \R^2$ be a truncation function (that is, compactly supported with $h(x)=x$ around the origin). The truncation function is fixed in the background and all results of interest are independent of its choice. The characteristic triplet (see \cite[Definition II.2.6]{JacodShiryaev2003}) of a two-dimensional semimartingale $\mathbf Y=\left(Y^1\atop Y^2\right)$ on a filtered probability space $(\Omega, \mathcal F, (\mathcal F_t)_{t\geq 0},\P)$ is the triplet $(\mathbf B,\mathbf C,\MU)$ consisting of
\begin{itemize}
	\item $\mathbf B=\left(B^1\atop B^2\right)$, a predictable process of bounded variation,
	\item $\mathbf C=(C_{i,j})_{i,j=1,2}=(\langle Y^{i,c}, Y^{j,c}\rangle)_{i,j=1,2}$, where $\mathbf Y^{c}=\left(Y^{1,c}\atop Y^{2,c}\right)$ is the continuous martingale part of $\mathbf Y$,
	\item the compensator measure $\MU$ on $(\R^2\times \R_+, \mathcal B(\R^2\times \R_+))$ of the point process $\MU^\Delta$ of jumps of $\mathbf Y$, also abbreviated as $\MU_t(\,\ARG\,)=\MU( \,\ARG\,\times[0,t])$,
\end{itemize}
so that the canonical representation of $\mathbf Y$ holds:
\begin{align*}
	\mathbf Y_t=\mathbf Y_0+\mathbf Y^c_t+h\ast (\MU^\Delta_t-\MU_t)+\bar h\ast \MU^\Delta_t+\mathbf B_t,\qquad t\geq 0,
\end{align*}
where $\bar h(x)=x-h(x)$ and $\ast$ denotes integration against point processes (see for instance Section~II.1 of \cite{JacodShiryaev2003}). Note that - comparing with the special case of a L\'evy process written in L\'evy-It\^o form - the canonical representation looks more familiar when $h(x)=x\1_{\{|x|\leq 1\}}$, namely,

\begin{equation}\label{E2.01}\begin{aligned}
	\mathbf Y_t&=\mathbf Y_0+\mathbf Y^c_t+\int_0^t \int_{|x|\leq 1} x\, (\MU^\Delta-\MU)(dx,ds)\\
&\quad+\int_0^t \int_{|x|>1 } x\, \MU^\Delta(dx,ds)+\mathbf B_t,\qquad t\geq 0.
\end{aligned}\end{equation}
The characteristic triplet depends on the choice of the truncation function $h$ but since $h$ is kept fixed during the proof, we suppress the dependence on $h$ in the notation. From now on we fix the standard truncation function with
\begin{align}
\label{E2.02}
	h(x)=x\1_{\{|x|\leq 1\}}.
\end{align}

\begin{lemma}\label{L2.1}	For every $p\in[1,2)$, $i=1,2$ and $N\in\N$, the processes $\brZ^{N,i}$ are $L^p$-martingales and can be written as two-dimensional stochastic integral equations
\begin{align}\label{E2.03}
\begin{split}
	 \brZ^N_t
	=\brZ^N_0+\sumkS   \int_0^{\beta^N t}\int_0^{I^N_{s-}(k)}\int_{E}\frac{1}{N}J\left(y,X^{N}_{s-}(k)\right)(\mathcal{N-N'})(\{k\},dy,dr,ds).
\end{split}
\end{align}
\end{lemma}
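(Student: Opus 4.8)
The plan is to obtain the representation \eqref{E2.03} from a summation identity and then to promote the resulting compensated Poisson integral to a genuine $L^p$-martingale by means of a moment inequality together with Gronwall's lemma. First I would sum the defining equation \eqref{E1.15} over $k\in S^N$ and divide by $N$; the migration term then vanishes because $\mathcal A^NX^{N,i}_s(k)=Z^{N,i}_s-X^{N,i}_s(k)$, so $\sum_{k\in S^N}\mathcal A^NX^{N,i}_s(k)=NZ^{N,i}_s-NZ^{N,i}_s=0$. As the sum over the $N$ sites is finite, interchanging it with the time integral and with the stochastic integral is immediate, and rescaling $t\mapsto\beta^Nt$ produces \eqref{E2.03}.

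Next I would verify that, for each fixed $k$, the inner compensated Poisson integral $t\mapsto\int_0^t\int_0^{I^N_{s-}(k)}\int_E\frac1NJ\big(y,X^N_{s-}(k)\big)(\mathcal N-\mathcal N')(\{k\},dy,dr,ds)$ is a purely discontinuous local martingale. This requires only that
\begin{equation*}
\int_0^tI^N_{s-}(k)\int_E\Big(\big|\tfrac1NJ(y,X^N_{s-}(k))\big|^2\wedge\big|\tfrac1NJ(y,X^N_{s-}(k))\big|\Big)\,\nu(dy)\,ds<\infty\quad\text{a.s.},
\end{equation*}
which follows from the estimates on $\nu$ in the appendix: they bound $\int_E(|J(y,x)|^2\wedge|J(y,x)|)\,\nu(dy)$ for $x\in E$ in such a way that the product of this bound with $I^N_s(k)$ is a locally bounded functional of the path $s\mapsto X^N_s$ (which is right-continuous with left limits). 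Summing over the $N$ sites, $\brZ^{N,i}-\brZ^{N,i}_0$ is a local martingale.

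To upgrade this to an $L^p$-martingale, fix $p\in(1,2)$. The appendix estimates on $\nu$ also give $\int_E|J_i(y,x)|^p\,\nu(dy)\le C_p|x|^p$ for all $x\in E$. On a site $k$ of type $1$ one has $I^N_s(k)=Z^{N,2}_s/X^{N,1}_s(k)$ (and symmetrically on type $2$), so Jensen's inequality for the concave map $x\mapsto x^{p-1}$ yields
\begin{equation*}
\sum_{k\in S^N}I^N_s(k)\,|X^N_s(k)|^p\le N\Big(Z^{N,2}_s(Z^{N,1}_s)^{p-1}+Z^{N,1}_s(Z^{N,2}_s)^{p-1}\Big)\le C_pN\Big((Z^{N,1}_s)^p+(Z^{N,2}_s)^p\Big).
\end{equation*}
Reading $\sum_{k\in S^N}$ as integration against the counting measure on $S^N$, so that $\brZ^{N,i}-\brZ^{N,i}_0$ is a single compensated Poisson integral on $\N\times E\times\R^+\times\R^+$, the standard $L^p$-bound for such integrals ($1\le p\le2$), combined with $\int_0^{I^N_{s-}(k)}dr=I^N_{s-}(k)$, gives
\begin{equation*}
\E\big[|Z^{N,i}_u|^p\big]\le C_p\,\E\big[(Z^{N,i}_0)^p\big]+\frac{C_p}{N^{p-1}}\int_0^u\E\big[(Z^{N,1}_s)^p+(Z^{N,2}_s)^p\big]\,ds.
\end{equation*}
Set $\phi(u):=\E[(Z^{N,1}_u)^p+(Z^{N,2}_u)^p]$; it is finite at $u=0$ by hypothesis. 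Localizing at $\sigma_M:=\inf\{s\ge0:Z^{N,1}_s+Z^{N,2}_s\ge M\}$ — which increases a.s.\ to $\infty$ since the paths of $X^N$ are locally bounded — the displayed inequality for the stopped process contains only finite quantities, so Gronwall's lemma bounds $\E[(Z^{N,1}_{u\wedge\sigma_M})^p+(Z^{N,2}_{u\wedge\sigma_M})^p]$ uniformly in $M$; Fatou's lemma then gives $\phi(u)<\infty$ for all $u$, and $\phi$ bounded on $[0,\beta^Nt]$. Re-inserting this bound shows $\sup_{u\le\beta^Nt}\E[|\brZ^{N,i}_u|^p]<\infty$, so $\brZ^{N,i}$ is a uniformly integrable local martingale on each compact time interval and hence a true $L^p$-martingale; since $L^p\subseteq L^1$ this also yields the statement for all $p\in[1,2)$.

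The step I expect to be the main obstacle is this last one. Because of the behaviour of $\nu$ near infinity, $\int_E|J(y,x)|^2\,\nu(dy)=\infty$ for $x\ne0$ — the ``lack of second moments'' emphasized in the introduction — so the $L^2$-isometry cannot be used, the whole argument must be run at an exponent $p<2$, and one must take care that the a~priori moment bound needed to close the Gronwall estimate is itself only accessible after the localization step.
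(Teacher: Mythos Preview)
Your argument is correct, and it is more self-contained than the paper's. The derivation of \eqref{E2.03} is identical: sum \eqref{E1.15} over $k$ and use that $\CA^N$ is a $q$-matrix so that the drift vanishes. For the $L^p$-martingale property the paper gives only an outline, referring to \cite[pp.~540--542]{KM2} and recording as the key sufficient condition
\[
\E\Big[\int_0^{t} I^N_{s-}(k)\int_E\big|J(y,X^N_{s-}(k))\big|^p\,\nu(dy)\,ds\Big]<\infty,
\]
which it attributes to $\int_E|y-(1,0)|^p\,\nu(dy)<\infty$ for $p\in(1,2)$. How the expectation over $I^N_{s-}(k)\,|X^N_{s-}(k)|^p$ is actually controlled is left to the cited reference, where the a~priori moment bounds on $X^N$ come from the explicit semigroup and duality structure of the process (cf.\ also Lemma~\ref{L2.5} and \eqref{E2.10}--\eqref{E2.12} in the present paper).

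Your route closes this internally and by different means: from the same $\nu$-estimate you get $\int_E|J_i(y,x)|^p\nu(dy)\le C_p|x|^p$, reduce $\sum_k I^N_s(k)|X^N_s(k)|^p$ to $N\big((Z^{N,1}_s)^p+(Z^{N,2}_s)^p\big)$ via the identity of Lemma~\ref{L2.7} and concavity of $x\mapsto x^{p-1}$, and then feed the Bichteler--Jacod type $L^p$-inequality for compensated Poisson integrals into a localized Gronwall argument. The localization at $\sigma_M$ is handled correctly: on $[0,\sigma_M)$ the left limits $Z^{N,i}_{s-}$ stay below $M$, so the integral bound is finite and Gronwall applies with constants independent of $M$, after which Fatou removes the localization. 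What you gain is a proof that stays entirely at the level of the jump SDE, without invoking the duality or harmonic-measure machinery of \cite{KM2}; the price is that the argument is longer and the constant $C_p$ blows up as $p\uparrow 2$, which is harmless here since $p$ is fixed. Your final remark that the case $p=1$ follows once any $p\in(1,2)$ is settled is the right way to handle the endpoint, since $\int_E|y_1-1|\,\nu(dy)=\infty$ and the direct bound fails there.
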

\begin{proof}
The proof of the statement can be found in \cite[pages 540-542]{KM2}. Here we only give a brief outline of the argument.

Note that (\ref{E2.03}) is an immediate consequence of the definition in \eqref{E1.15} and the fact that $\CA^N$ is a $q$-matrix and hence all the drift terms cancel.
In order to show that $Z^{N,i}$ is an $L^p$-martingale, it is enough to show that (for all $p\in[1,2)$, $t>0$ and $k\in S^N$)
\begin{equation}
\label{E2.04}
\E\left[\int_0^{t}I^N_{s-}(k)\int_{E}\left|J\left(y,X^{N}_{s-}(k)\right)\right|^p\,\mathcal{N'}(\{k\},dy,[0,1],ds)\right]<\infty.
\end{equation}
This, however, is a consequence of the fact (which can be checked by a direct computation) that, by definition of $\nu$,
$$\int_E|y-(1,0)|^p\nu(dy)<\infty\quad\text{for all }p\in[1,2).$$
\end{proof}
\begin{proposition}\label{P2.2}
		For the truncation function $h(x)=x\1_{\{|x|\leq 1\}}$, the two-di\-mensional semimartingale $(\brZ_t^N)_{t\geq 0}$ from Theorem \ref{T1} has the characteristic triplet $(\mathbf B^N,\mathbf C^N,\MU^N)$ with
	\begin{align*}
		\mathbf B^N_t&=-\beta^N\sumkS   \int_0^t\int_E \bar h\Big(\frac{1}{N}J\Big(y,\rX^{N}_{s}(k)\Big)\Big)\NU(dy)\,\rI^N_{s}(k)\,ds\\
		&=-\beta^N\sumkS   \int_0^ t\int_E\frac{1}{N}J\left(y,\rX^{N}_{s}(k)\right) \1_{\left\{|J(y,\rX^{N}_{s}(k))|/N> 1\right\}}\NU(dy)\,\rI^N_{s}(k)\,ds,\\
		\mathbf C^N&=0,\\
		\MU^N_t(A)&=\beta^N\sumkS  \int_0^t\int_E \1_{A\setminus\{0\}}\!\left(\frac{1}{N}J\left(y, \rX^N_{s}(k)\right)\right)\NU(dy)\,\rI^N_{s}(k)\,ds,
	\end{align*}
		for $t\geq 0$ and $A\in \mathcal B(\R^2)$.
		\end{proposition}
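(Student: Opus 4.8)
The plan is to identify the triplet by reading it off the canonical semimartingale representation, starting from the stochastic integral equation \eqref{E2.03} supplied by Lemma~\ref{L2.1}. Three points have to be settled: that $\brZ^N$ has no continuous martingale part, that $\MU^N$ as displayed is the predictable compensator of the jump point measure $\MU^{N,\Delta}$ of $\brZ^N$, and that $\mathbf B^N$ takes the claimed form. The first is immediate: by \eqref{E2.03} the martingale $\brZ^N-\brZ^N_0$ is a stochastic integral against a compensated Poisson point measure, hence purely discontinuous, so its continuous martingale component vanishes and $\mathbf C^N=(\langle\brZ^{N,i,c},\brZ^{N,j,c}\rangle)_{i,j=1,2}\equiv 0$.

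For the jump measure I would first perform the deterministic time substitution $s\mapsto\beta^N s$ in \eqref{E2.03}. The Poisson point process $\mathcal N$ on $\N\times E\times\R_+\times\R_+$ has intensity $\ell\otimes\NU\otimes\lambda\otimes\lambda$, so its restriction to the predictable region $\{(k,y,r,s):r\le I^N_{s-}(k)\}$ has compensator $\ell(dk)\,\NU(dy)\,\1_{\{r\le I^N_{s-}(k)\}}\,dr\,ds$; integrating out the variable $r$ turns this into the effective per-site jump compensator $I^N_{s-}(k)\,\NU(dy)\,ds$. A point of $\mathcal N$ in this region at $(k,y,r,s)$ produces a jump of $\bZ^N$ of size $\frac1N J(y,X^N_{s-}(k))$ at time $s$; after the substitution $s=\beta^N u$ this becomes a jump of $\brZ^N$ of the same size at time $u$, the compensator acquires the factor $\beta^N$ from $ds=\beta^N\,du$, and $I^N_{\beta^N u-}(k)=\rI^N_{u-}(k)$, $X^N_{\beta^N u-}(k)=\rX^N_{u-}(k)$. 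Pushing the compensator forward under the predictable map $y\mapsto\frac1N J(y,\rX^N_{u-}(k))$, discarding the atom at the origin (jump measures do not charge $0$, whence the factor $\1_{A\setminus\{0\}}$), and summing over $k\in S^N$, one obtains precisely the displayed $\MU^N$. Predictability of the integrands $\rI^N_{s-}(k)$ and $\rX^N_{s-}(k)$ — left limits of right-continuous adapted processes, the time change being deterministic — is what makes $\MU^N$ a genuine predictable compensator in the sense of \cite[Section~II.1]{JacodShiryaev2003}; in the $ds$-integrals one may finally replace $\rI^N_{s-}$, $\rX^N_{s-}$ by $\rI^N_s$, $\rX^N_s$, since these processes have at most countably many jumps.

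To extract $\mathbf B^N$, split the integrand $x=\frac1N J(y,\rX^N_{s-}(k))$ of the compensated integral as $x=h(x)+\bar h(x)$. This gives
\begin{align*}
\brZ^N_t=\brZ^N_0+h\ast(\MU^{N,\Delta}_t-\MU^N_t)+\bar h\ast\MU^{N,\Delta}_t-\bar h\ast\MU^N_t,
\end{align*}
and comparison with the canonical representation $\brZ^N=\brZ^N_0+\brZ^{N,c}+h\ast(\MU^{N,\Delta}-\MU^N)+\bar h\ast\MU^{N,\Delta}+\mathbf B^N$, together with $\brZ^{N,c}=0$, forces $\mathbf B^N=-\bar h\ast\MU^N$; this is the first displayed formula for $\mathbf B^N$, and the second follows by substituting $\bar h(x)=x-h(x)=x\1_{\{|x|>1\}}$.

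\textbf{Main obstacle.} The one step requiring care is the legitimacy of the splitting $x=h(x)+\bar h(x)$, i.e. checking that $\bar h\ast\MU^N$ — and hence $\bar h\ast\MU^{N,\Delta}$ and $h\ast(\MU^{N,\Delta}-\MU^N)$ — is well defined and of locally finite variation, so that the rearrangement above is not merely formal. For the large-jump piece this comes down to $\MU^N(\{x:|x|>1\}\times[0,t])<\infty$, which follows from $\NU(\{y:|y-(1,0)|>\delta\})<\infty$ for every $\delta>0$ (immediate from the explicit densities \eqref{E1.14} and the appendix) together with local boundedness of $\rI^N$: big jumps occur at finite rate. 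For the small-jump piece one uses $\int_E(|y-(1,0)|^2\wedge1)\,\NU(dy)<\infty$ — equivalently the bound $\int_E|y-(1,0)|^p\,\NU(dy)<\infty$ for $p\in[1,2)$ already invoked in the proof of Lemma~\ref{L2.1} — together with boundedness of the prefactors produced by $J$ on the relevant range. Once these integrability facts are in place, the remainder is bookkeeping with the definitions recalled in Section~\ref{S2.2} and \cite[Chapter~II]{JacodShiryaev2003}.
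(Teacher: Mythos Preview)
Your overall architecture matches the paper's proof: start from \eqref{E2.03}, note the absence of a continuous martingale part, identify $\MU^N$ as the compensator of the jump measure, and split $x=h(x)+\bar h(x)$ to read off $\mathbf B^N=-\bar h\ast\MU^N$. The paper organizes this as the decomposition \eqref{E2.05}--\eqref{E2.07} and then refers to the Grigolionis representation.

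There is, however, a genuine gap in your ``main obstacle'' paragraph. You justify finiteness of the large-jump compensator by invoking \emph{local boundedness of $\rI^N$}. This is neither proved nor, in fact, true in general: if $\rX^N_s(k)=(x_1,0)$ then, by the computation of Lemma~\ref{L2.7}, $\rI^N_s(k)=\rZ^{N,2}_s/x_1$, and a single type-preserving jump (multiplication by a small $y_1>0$, which has positive $\nu$-mass) can make $x_1$ arbitrarily small while $\rZ^{N,2}_s$ stays bounded away from zero. So $\rI^N$ is not a priori locally bounded, and your argument ``$\NU(\{|y-(1,0)|>\delta\})<\infty$ plus bounded rate'' does not close. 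Moreover, even finiteness of $\MU^N(\{|x|>1\}\times[0,t])$ would not by itself give finite variation of $\bar h\ast\MU^N$, since $\bar h$ is unbounded.

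The paper circumvents this by bounding the \emph{product} of rate and jump size directly: Lemma~\ref{L2.15} gives
\[
\rI^N_{s}(k)\int_E\frac1N|J(y,\rX^N_s(k))|\,\1_{\{|J|/N>1\}}\,\NU(dy)\;\leq\;\frac{8}{N^2}\,\rI^N_s(k)\big[(\rX^{N,1}_s(k))^2+(\rX^{N,2}_s(k))^2\big],
\]
and then $\rI^N_s(k)\,\rX^{N,i}_s(k)=\rZ^{N,3-i}_s\leq D_N$ (Lemma~\ref{L2.7} and Lemma~\ref{L2.3}) shows the right-hand side is bounded by $8D_N^2/N^2<\infty$. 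The crucial point is that the rate $\rI^N_s(k)$ blows up precisely when the jump scale $\rX^{N,i}_s(k)$ shrinks, and only their product is controlled. Once you replace your local-boundedness claim by this estimate, your argument is complete and coincides with the paper's.
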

\begin{proof}
 The drift terms of $X^N$ cancel in the total mass because the migration operator $\mathcal A^N$ is a $q$-matrix. By linearity of the Poissonian integral, we split the integral over $E$ into two integrals containing the small and the large jumps, respectively:
\begin{equation}
\label{E2.05}
\begin{aligned}
		 \brZ^N_t
	&=\brZ^N_0+\sumkS   \int_0^{\beta^N t}\int_0^{I^N_{s-}(k)}\int_{E}\1_{\left\{|J\left(y,X^{N}_{s-}(k)\right)|/N\leq 1\right\}}\\
	 &\qquad\times\frac{1}{N}J\left(y,X^{N}_{s-}(k)\right)(\mathcal{N-N'})(\{k\},dy,dr,ds)\\
	& \quad+\sumkS   \int_0^{\beta^N t}\int_0^{I^N_{s-}(k)}\int_{E}\1_{\left\{|J\left(y,X^{N}_{s-}(k)\right)|/N> 1\right\}}\\
	 &\quad\quad\times\frac{1}{N}J\left(y,X^{N}_{s-}(k)\right)(\mathcal{N-N'})(\{k\},dy,dr,ds).
	\end{aligned}
\end{equation}
By Lemma~\ref{L2.15} below, we have
\begin{equation}
\label{E2.06}
\begin{aligned}
&\quad I^N_{s-}(k)\int_{E}\1_{\left\{|J\left(y,X^{N}_{s-}(k)\right)|/N>1\right\}}\frac{1}{N}\left|J\left(y,X^{N}_{s-}(k)\right)\right|\nu(dy)\\
&\leq I^N_{s-}(k)\frac{8}{N^2}\big[(X^{N,1}_{s-}(k))^2+(X^{N,2}_{s-}(k))^2\big].
\end{aligned}
\end{equation}
Let $D_N<\infty$ be as in Lemma~\ref{L2.3} below. Then, for all $k\in S^N$ and all $s\geq0$, we have
$$I^N_{s-}(k)=\frac{\CA^NX^{N,i}_{s-}(k)}{X^{N,3-i}_{s-}(k)}\leq \frac{D_N}{X^{N,3-i}_{s-}(k)}\mfalls X^{N,3-i}_{s-}(k)>0.$$ Hence the right hand side of (\ref{E2.06}) is bounded by $8D_N^2/N^2<\infty$.

Recall that $\mathcal{N}'(\{k\},dy,dr,ds)=\nu(dy)\,dr\,ds$. Since the right hand side of (\ref{E2.06}) is bounded, the compensator integral of the large jumps in (\ref{E2.05}) is well defined, and we can split the compensated integral of large jumps:
	\begin{align}\label{E2.07}
	\begin{split}
		\brZ^N_t&= \brZ^N_0+\sumkS   \int_0^{\beta^N t}\int_0^{I^N_{s-}(k)}\int_{E}\1_{\left\{|J\left(y,X^{N}_{s-}(k)\right)|/N\leq 1\right\}}\\
&\qquad\times\frac{1}{N}J\left(y,X^{N}_{s-}(k)\right)(\mathcal{N-N'})(\{k\},dy,dr,ds)\\[2mm]
	&\quad +\sumkS   \int_0^{\beta^N t}\int_0^{I^N_{s-}(k)}\int_{E}\1_{\left\{|J\left(y,X^{N}_{s-}(k)\right)|/N>1\right\}}\\
&\qquad\times\frac{1}{N}J\left(y,X^{N}_{s-}(k)\right)\mathcal{N}(\{k\},dy,dr,ds)\\[2mm]
		&\quad -\sumkS   \int_0^{\beta^N t}\int_0^{I^N_{s-}(k)}\int_{E}\1_{\left\{|J\left(y,X^{N}_{s-}(k)\right)|/N>1\right\}}\\
&\qquad\times\frac{1}{N}J\left(y,X^{N}_{s-}(k)\right)\mathcal{N}'(\{k\},dy,dr,ds).
		\end{split}
	\end{align}

Using the definition of $\mathcal N'$, integrating out $dr$ and substituting  $\beta^N$ gives the claim. Note that this final step is also referred to as Grigolionis representation and can be found for jump diffusion processes for instance in Chapter~III.2 of \cite{JacodShiryaev2003}.
\end{proof}

\subsection{Moment Estimates}
\label{S2.3}

	In this section, moment bounds are derived. They will be needed for the tightness proof.

\begin{lemma}
\label{L2.3}
For every $K>0$, we have
$$\P\big[Z^{N,i}_t\geq K\text{ for some }t\geq0\big]\leq \frac{\E[Z^{N,i}_0]}{K},\qquad i=1,2, \, N\in\N.$$
In particular, for all $N$, we have
$$D_N:=\sup_{t\geq0,\,k\in S^N,\, i=1,2}X^{N,i}_t(k)<\infty\quad\text{ a.s.}$$
\end{lemma}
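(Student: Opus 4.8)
The plan is to observe that each coordinate of the unscaled average process is a nonnegative c\`adl\`ag supermartingale (in fact a martingale) and then to apply the standard maximal inequality for nonnegative supermartingales.

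First I would record nonnegativity: by Theorem~\ref{111} the solution $X^N$ stays in $E^{S^N}\subset(\R_+^2)^{S^N}$, so $Z^{N,i}_t=\frac1N\sum_{k\in S^N}X^{N,i}_t(k)\geq0$ for all $t\geq0$. Next, Lemma~\ref{L2.1} gives that $\brZ^{N,i}$ is an $L^p$-martingale for every $p\in[1,2)$ with respect to its natural filtration; since $Z^{N,i}_t=\brZ^{N,i}_{t/\beta^N}$ is obtained from $\brZ^{N,i}$ by the deterministic increasing time change $t\mapsto t/\beta^N$, the process $Z^{N,i}$ is again a nonnegative c\`adl\`ag martingale (with respect to the time-changed filtration). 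Equivalently, the local-martingale property can be read off directly from~\eqref{E2.03} together with the integrability bound~\eqref{E2.04}; combined with nonnegativity this already makes $Z^{N,i}$ a supermartingale, which is all that is used below.

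Then, for fixed $K>0$, I would set $T_K:=\inf\{t\geq0:Z^{N,i}_t\geq K\}$. Since the paths are c\`adl\`ag, $Z^{N,i}_{T_K}\geq K$ on $\{T_K<\infty\}$, and applying optional stopping to the bounded times $0$ and $T_K\wedge n$ yields $\E[Z^{N,i}_0]\geq\E[Z^{N,i}_{T_K\wedge n}]\geq\E[Z^{N,i}_{T_K}\1_{\{T_K\leq n\}}]\geq K\,\P[T_K\leq n]$. Letting $n\to\infty$ and using $\{T_K\leq n\}\uparrow\{T_K<\infty\}=\{\,Z^{N,i}_t\geq K\text{ for some }t\geq0\,\}$ gives exactly the asserted bound $\P[Z^{N,i}_t\geq K\text{ for some }t\geq0]\leq\E[Z^{N,i}_0]/K$. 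For the ``in particular'' statement, the standing hypothesis $\sup_{N}\E[(Z^{N,i}_0)^2]<\infty$ guarantees $\E[Z^{N,i}_0]<\infty$ for each fixed $N$, so letting $K\to\infty$ forces $\sup_{t\geq0}Z^{N,i}_t<\infty$ a.s.; since $0\leq X^{N,i}_t(k)\leq\sum_{l\in S^N}X^{N,i}_t(l)=N\,Z^{N,i}_t$ for every $k\in S^N$ and $t\geq0$, it follows that $D_N\leq N\max_{i=1,2}\sup_{t\geq0}Z^{N,i}_t<\infty$ a.s.

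The argument is essentially soft: the martingale property is already supplied by Lemma~\ref{L2.1}, and the only mildly delicate point is that the maximal inequality is needed up to $t=\infty$ rather than on a finite horizon, which is exactly what the truncation $T_K\wedge n$ together with the monotone limit $n\to\infty$ takes care of. I do not anticipate any genuine obstacle here.
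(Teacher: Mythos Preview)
Your proof is correct and follows essentially the same approach as the paper, which simply says ``Recall that $Z^{N,i}$ is a martingale (Lemma~\ref{L2.1}). Hence the claim is a direct consequence of Doob's inequality.'' You spell out the maximal inequality via optional stopping and the limit $n\to\infty$, and you also fill in the ``in particular'' part by bounding $X^{N,i}_t(k)\leq N Z^{N,i}_t$, which the paper leaves implicit; none of this departs from the paper's line of argument.
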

\begin{proof}
Recall that $Z^{N,i}$ is a martingale (Lemma~\ref{L2.1}). Hence the claim is a direct consequence of Doob's inequality.
\end{proof}
	\begin{lemma}\label{L2.4}
		Let $(\bZ^N)_{N\in \N}$ be as in Theorem \ref{T1}, then
		\begin{align*}
			\E\big[Z^{N,1}_tZ^{N,2}_t\big]\leq \E\big[Z^{N,1}_0Z^{N,2}_0\big]
		\end{align*}
		for all $t\geq 0$.
	\end{lemma}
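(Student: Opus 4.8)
The plan is to show that $t\mapsto Z^{N,1}_tZ^{N,2}_t$ is a supermartingale by writing down its semimartingale decomposition and checking that both pieces of finite variation have the right sign.

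First I would recall from Lemma~\ref{L2.1} that $Z^{N,1}$ and $Z^{N,2}$ are true martingales, and apply the integration-by-parts formula for semimartingales: $Z^{N,1}_tZ^{N,2}_t=Z^{N,1}_0Z^{N,2}_0+\int_0^t Z^{N,1}_{s-}\,dZ^{N,2}_s+\int_0^t Z^{N,2}_{s-}\,dZ^{N,1}_s+[Z^{N,1},Z^{N,2}]_t$. By Proposition~\ref{P2.2} the continuous martingale parts vanish ($\mathbf C^N=0$), so $[Z^{N,1},Z^{N,2}]_t=\sum_{s\le t}\Delta Z^{N,1}_s\,\Delta Z^{N,2}_s$.

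The crucial algebraic observation is that every jump weakly decreases the product: $\Delta Z^{N,1}_s\,\Delta Z^{N,2}_s\le 0$. Indeed, by \eqref{E2.03} a jump at time $s$ equals $\frac1N J(y,X^N_{s-}(k))$ with $y\in E$ and $X^N_{s-}(k)\in E$; running through the definition of $J$, in the two type-preserving cases (with $y$ on the same axis as the nonzero coordinate of $X^N_{s-}(k)$) one of $J_1,J_2$ vanishes, while in the two type-changing cases $J_1J_2=-y_2x_i^2\le 0$. Hence $t\mapsto[Z^{N,1},Z^{N,2}]_t$ is nonincreasing and vanishes at $0$. Equivalently, one can read the compensator of $\sum_{s\le t}\Delta Z^{N,1}_s\Delta Z^{N,2}_s$ off the jump measure $\MU^N$ of Proposition~\ref{P2.2}: it equals $\beta^N\sum_k\int_0^t\int_E\frac1{N^2}J_1(y,\rX^N_s(k))J_2(y,\rX^N_s(k))\,\nu(dy)\,\rI^N_s(k)\,ds\le 0$, which avoids arguing that jumps occur one at a time.

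The remaining, and essentially the only nontrivial, point is to handle the local-martingale part $M_t:=\int_0^t Z^{N,1}_{s-}\,dZ^{N,2}_s+\int_0^t Z^{N,2}_{s-}\,dZ^{N,1}_s$: a priori this is merely a local martingale, and we do not yet know that $Z^{N,1}_tZ^{N,2}_t$ is integrable. Here I would use that, from the decomposition together with $Z^{N,1}_tZ^{N,2}_t\ge 0$ and $[Z^{N,1},Z^{N,2}]_t\le 0$, one has $M_t\ge-Z^{N,1}_0Z^{N,2}_0$; the elementary bound $Z^{N,1}_0Z^{N,2}_0\le\frac12((Z^{N,1}_0)^2+(Z^{N,2}_0)^2)$ and the hypothesis $\sup_N\E[(Z^{N,i}_0)^2]<\infty$ make the right-hand side integrable. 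A local martingale bounded below by an integrable random variable is a supermartingale (stop along a reducing sequence and apply conditional Fatou), so $\E[M_t]\le\E[M_0]=0$; this also forces $M_t$, and then $Z^{N,1}_tZ^{N,2}_t$ and $[Z^{N,1},Z^{N,2}]_t$, to be integrable. Taking expectations in the integration-by-parts identity and discarding the two nonpositive terms $\E[M_t]$ and $\E[[Z^{N,1},Z^{N,2}]_t]$ then yields $\E[Z^{N,1}_tZ^{N,2}_t]\le\E[Z^{N,1}_0Z^{N,2}_0]$. The main obstacle is thus not the computation but this soft integrability step, i.e. promoting the local martingale $M$ to a genuine supermartingale.
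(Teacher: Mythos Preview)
Your proof is correct and takes a genuinely different route from the paper.

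The paper does not touch the semimartingale decomposition of $Z^{N,1}Z^{N,2}$ at all. Instead it quotes a mixed second moment bound from \cite[Lemma~3.7]{KM2},
\[
\E\big[X^{N,1}_t(k)X^{N,2}_t(l)\big]\le \E\big[(\etAN X^{N,1}_0)(k)\,(\etAN X^{N,2}_0)(l)\big],
\]
averages over $k$ and $l$, and uses that $\etAN$ preserves spatial averages. So the paper's argument is essentially a one-line reduction to a known covariance inequality for the coordinate processes.

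Your argument, by contrast, is self-contained: the algebraic check $J_1(y,x)J_2(y,x)\le 0$ for $x,y\in E$ gives $[Z^{N,1},Z^{N,2}]\le 0$ directly, and the lower bound $M_t\ge -Z^{N,1}_0Z^{N,2}_0$ together with the second-moment assumption on $\bZ^N_0$ promotes the local martingale to a supermartingale via conditional Fatou. This avoids importing the external moment estimate and makes transparent \emph{why} the product decreases in mean (jumps of the two types always have opposite sign). The price is the soft integrability step you flag, but that step is sound; note also that Lemma~\ref{L2.3} already gives a.s.\ boundedness of $Z^{N,i}$, which makes the integrands locally bounded and the stochastic integrals bona fide local martingales. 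Either proof is fine; yours is arguably more informative about the mechanism.
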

    \begin{proof}
    	Recall from \eqref{E1.03} the transition semigroup $\etAN$ of $\CA^N$. By Lemma 3.7 of \cite{KM2}, we get the mixed second moment bound
    	\begin{align*}
    		\E\big[X^{N,1}_t(k) X^{N,2}_t(l)\big]\leq \E\Big[(\etAN X_0^{N,1})(k) \, (\etAN X_0^{N,2})(l)\Big],\qquad t\geq 0.
    	\end{align*}
As in the previous proof we get
	 \begin{align*}
		\E\big[Z^{N,1}_t& Z^{N,2}_t\big]
		=\E\Big[\frac{1}{N^2} \sum_{k,l=1}^N X^{N,1}_{t}(k) X^{N,2}_{ t}(l)\Big]\\
		&\leq\E\Big[ \frac{1}{N} \sumkS   (\etAN X_0^{N,1})(k)\frac{1}{N} \sum_{l=1}^N(\etAN X_0^{N,2})(l)\Big]
		=\E\big[ Z^{N,1}_0\,Z^{N,2}_0\big]
	\end{align*}	
	for all $t\geq 0$ and $N\in \N$.
	    \end{proof}

\begin{lemma}\label{L2.5}
There exists a constant $C<\infty$ such that (with the convention $0\log0=0$)
\begin{align}\label{E2.08}
	\frac1N\sumkS  \E\big[|X^{N,i}_t(k)\log(X^{N,i}_t(k))|\big]\leq \,C\quad\mbox{for all }  t\geq0,\, N\in\N,\, i=1,2.
\end{align}
\end{lemma}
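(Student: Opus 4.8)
The plan is to pass first to the function $\phi(x)=x\log x$ (with $\phi(0)=0$): since $|x\log x|\le x\log x+2e^{-1}$ for all $x\ge 0$, it suffices to bound, uniformly in $t$ and $N$ and from above, $\E[\Psi^N_t]$ where
\[
\Psi^N_t:=\frac1N\sum_{k=1}^N\Big(\phi\big(X^{N,1}_t(k)\big)+\phi\big(X^{N,2}_t(k)\big)\Big).
\]
I symmetrise over the two types on purpose: a type change at a site turns first‑coordinate mass into second‑coordinate mass and conversely, so only the sum $\phi(X^{N,1})+\phi(X^{N,2})$ has a tractable jump behaviour. Note $\phi$ is convex on $[0,\infty)$ and bounded below by $-e^{-1}$, hence $\Psi^N_t\ge-2e^{-1}$. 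I would then apply Dynkin's formula (the argument being legitimate because, by the $L^p$ estimate used in Lemma~\ref{L2.1}, the martingale part is a true martingale) and close with a Gronwall argument on $u(t):=\E[\Psi^N_t]$.

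The core is the computation of the drift $a_s$ of $\Psi^N_s$; it splits into a migration and a compensated‑jump part. The apparent difficulty that $\phi'(0)=-\infty$ does not occur: the net continuous drift of a coordinate vanishes exactly where that coordinate equals $0$ (the migration influx into the empty coordinate is cancelled by the jump compensator — this is the mechanism keeping the process on $E$), so on those intervals $\phi$ of the empty coordinate is simply constant. For the migration part, convexity of $\phi$ gives at a site of mass $z$ of the relevant type $i$ that $\phi'(z)\big(\mathcal A^N X^{N,i}_s(k)\big)=\phi'(z)\big(Z^{N,i}_s-z\big)\le\phi(Z^{N,i}_s)-\phi(z)$, so after summing the migration contribution is at most $(Z^{N,1}_s\log Z^{N,1}_s)_++(Z^{N,2}_s\log Z^{N,2}_s)_+-\Psi^N_s$. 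For the jump part I would use the explicit density \eqref{E1.14}: at a type‑$1$ site of mass $x_1$ the compensated increments are $\phi(y_1x_1)-\phi(x_1)-\phi'(x_1)(y_1-1)x_1=x_1(y_1\log y_1-y_1+1)$ for a within‑type jump, $-\phi(x_1)-\phi'(x_1)(-x_1)=x_1$ for the loss at a type change, and $\phi(y_2x_1)=y_2x_1\log y_2+y_2x_1\log x_1$ for the gain in the second coordinate at a type change; the $\nu$‑integrals $\int_{\{y_2=0\}}(y_1\log y_1-y_1+1)\,\nu(dy)$, $\nu(\{y_1=0\})=\tfrac2\pi$, $\int_{\{y_1=0\}}y_2\,\nu(dy)=1$, $\int_{\{y_1=0\}}y_2\log y_2\,\nu(dy)$ are all finite (the first because the bracket is $O((y_1-1)^2)$ near the pole $y_1=1$ and $O(y_1\log y_1)$ at infinity). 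Multiplied by the rate $Z^{N,2}_s/x_1$ these give a bounded multiple of $Z^{N,2}_s$ and the term $Z^{N,2}_s\log x_1$; summing over sites and using Jensen's inequality (concavity of $\log$, together with $m\log(N/m)\le N/e$) yields
\[
a_s\le -\Psi^N_s+C\big(Z^{N,1}_s+Z^{N,2}_s\big)+Z^{N,2}_s(\log Z^{N,1}_s)_++Z^{N,1}_s(\log Z^{N,2}_s)_++(Z^{N,1}_s\log Z^{N,1}_s)_++(Z^{N,2}_s\log Z^{N,2}_s)_+.
\]

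It remains to take expectations and bound the forcing uniformly in $t,N$, then invoke Gronwall. The linear terms are harmless since $Z^{N,i}$ is a martingale (Lemma~\ref{L2.1}), so $\E[Z^{N,i}_t]=\E[Z^{N,i}_0]$. The mixed terms $\E[Z^{N,i}_t(\log Z^{N,j}_t)_+]$ with $i\neq j$ are controlled by bounding $(\log z)_+\le\tfrac1{e(p-1)}z^{p-1}$, writing $Z^{N,i}_t(Z^{N,j}_t)^{p-1}=(Z^{N,1}_tZ^{N,2}_t)^{p-1}(Z^{N,i}_t)^{2-p}$, and applying Hölder with exponents $\tfrac1{p-1},\tfrac1{2-p}$ together with $\E[Z^{N,1}_tZ^{N,2}_t]\le\E[Z^{N,1}_0Z^{N,2}_0]$ (Lemma~\ref{L2.4}), the conservation of first moments, and the assumed uniform bound on $\E[(Z^{N,i}_0)^2]$. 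The diagonal terms $\E[(Z^{N,i}_t\log Z^{N,i}_t)_+]$ are the delicate point: by Jensen, $\phi(Z^{N,i}_t)\le\tfrac1N\sum_k\phi(X^{N,i}_t(k))\le\Psi^N_t+e^{-1}$, so they may be absorbed into the $-\Psi^N_t$ damping, but since they are a priori of the same order as the damping one must either keep a strictly positive fraction of it by a careful choice of constants, or first establish the auxiliary bound $\sup_{t,N}\E[(Z^{N,i}_t\log Z^{N,i}_t)_+]<\infty$ through a separate, more involved Dynkin computation for $Z^{N,i}_t\log Z^{N,i}_t$ (again using Lemma~\ref{L2.4}, the martingale property, and the stochastic boundedness of $Z^{N,i}$ from Lemma~\ref{L2.3} to deal with sites, and total masses, near the boundary). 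Once the forcing is bounded uniformly, the inequality $u'(t)\le -u(t)+C$ gives $u(t)\le u(0)+C$, and $u(0)=\tfrac1N\sum_k\E[\phi(X^{N,i}_0(k))]$ is controlled by the $p$‑th moment hypothesis $C_p<\infty$; the lemma follows. The main obstacle is exactly this absorption step: the forcing created by the logarithmic singularity of $\nu$ at $y_1=1$, its heavy tail at infinity, and sites carrying an atypically small or large share of the total mass, has to be swallowed by the damping term $-\Psi^N_t$.
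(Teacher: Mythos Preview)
Your approach is genuinely different from the paper's and, as you yourself flag, has an unresolved gap at the absorption step. In fact the gap is fatal for the uniformity in $t$ that the lemma requires. Your migration bound gives the damping $-\Psi^N_s$ together with the diagonal forcing $(\phi(Z^{N,1}_s))_+ + (\phi(Z^{N,2}_s))_+$. But Jensen's inequality for the convex $\phi$ yields $\phi(Z^{N,1}_s)+\phi(Z^{N,2}_s)\le \Psi^N_s$, hence $(\phi(Z^{N,1}_s))_+ + (\phi(Z^{N,2}_s))_+\le \Psi^N_s + 2e^{-1}$, and the damping is swallowed \emph{exactly}, with no positive fraction left over (equality is attained when all sites are of one type and carry equal mass). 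What survives is $u'(t)\le C$, i.e.\ $u(t)\le u(0)+Ct$, which is not uniform in $t$. Your fallback, a separate Dynkin computation for $Z^{N,i}_t\log Z^{N,i}_t$ or for $(Z^{N,i}_t)^{p}$, runs into the same wall: for the nonnegative martingale $Z^{N,i}$ and convex power $p\in(1,2)$, It\^o's formula shows $t\mapsto\E[(Z^{N,i}_t)^p]$ is nondecreasing, and nothing in Lemmas~\ref{L2.3}--\ref{L2.4} caps it uniformly.

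The paper's proof sidesteps all of this by a structural shortcut specific to $\mathrm{MCB}(\infty)$: first the elementary bound $|x\log x|\le 1+\tfrac{x^p}{p-1}$ reduces the problem to a uniform bound on $\tfrac1N\sum_k\E[(X^{N,i}_t(k))^p]$; then, by \cite[Theorem~2]{KlenkeOeler2010}, one has the \emph{exact} identity $\E[(X^{N,i}_t(k))^p]=\E\big[\int_E y_i^p\,Q_{(e^{t\mathcal A^N}X_0^N)(k)}(dy)\big]$, and a known estimate \cite[Lemma~2.3]{KlenkeOeler2010} gives $\int_E y_i^p\,Q_x(dy)\le \tfrac{2}{2-p}(x_1^p+x_2^p)$. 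After Jensen on the stochastic semigroup $e^{t\mathcal A^N}$, this collapses to $\tfrac{4C_p}{2-p}$, uniformly in $t$ and $N$. No Dynkin or Gronwall argument is needed; the uniformity in $t$ comes for free from the harmonic-measure representation, which is precisely what your dynamical approach lacks.
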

\begin{proof}
Let $p\in(1,2)$ as in the formulation of Theorem~\ref{T1}. Note that $|x\log(x)|\leq 1$ for $x\in[0,1]$. Furthermore, for $x\geq1$, we have $\log(x)=\frac{1}{p-1}\log(x^{p-1})\leq\frac{x^{p-1}}{p-1}$. Summing up we have
  $$|x\log(x)|\leq 1+\frac{x^p}{p-1}\quad\text{for all }x\geq0.$$
  Hence, the left hand side in (\ref{E2.08}) is bounded by
\begin{equation}
\label{E2.09}
1+\frac{1}{p-1}\frac1N\sumkS  \E\big[(X^{N,i}_t(k))^p\big].
\end{equation}
Recall that $Q_x(dy)$ denotes the harmonic measure of planar Brownian motion on $[0,\infty)^2$ started at $x=(x_1,x_2)\in[0,\infty)^2$. By Theorem 2 of \cite{KlenkeOeler2010}, we get
\begin{equation}
\label{E2.10}
\E\big[X^{N,i}_t(k)^p\big]
=\E\left[\int_E y_i^p\, Q_{(\etAN X^N_0)(k)}(dy)\right].
\end{equation}
By Lemma 2.3 of \cite{KlenkeOeler2010}, we have
\begin{equation}
\label{E2.11}
\begin{aligned}
\int_E y_i^p\, Q_x(dy)&\leq\,\pi\frac{\sin(p/2)}{\sin((\pi/2)p)}\big(x_1^2+x_2^2\big)^{p/2}\qquad\mbox{for }x\in[0,\infty)^2.
\end{aligned}
\end{equation}
Note that $\phi:[1,2]\to\R$, $p\mapsto(2-p)\sin(p/2)/\sin((\pi/2)p)$ is maximal for $p=2$ with $\phi(2)=2\sin(1)/\pi\leq2/\pi$. Further note that $\big(x_1^2+x_2^2\big)^{p/2}\leq x_1^{p}+x_2^{p}$. Concluding, we have
\begin{equation}
\label{E2.12}
\begin{aligned}
\int_E y_i^p\, Q_x(dy)&\leq\,\frac{2}{2-p}\big(x_1^p+x_2^p\big)\qquad\mbox{for }x\in[0,\infty)^2.
\end{aligned}
\end{equation}
Combining (\ref{E2.10}) and (\ref{E2.12}), using Jensen's inequality in the second line and the assumption (\ref{E1.18}) in the fourth line, we obtain
$$\begin{aligned}
\frac1N\sumkS  \E\big[(X^{N,i}_t(k))^p\big]
&\leq \frac{2}{2-p}\frac1N\sumkS  \E\big[((\etAN X^{N,1}_0)(k))^p+((\etAN X^{N,2}_0)(k))^p\big]\\
&\leq \frac{2}{2-p}\sum_{j=1}^2\frac1N\sum_{k,l=1}^N\E\big[\etAN(k,l)(X^{N,j}_0(l))^p\big]\\
&= \frac{2}{2-p}\sum_{j=1}^2\frac1N\sum_{l=1}^N\E\big[(X^{N,j}_0(l))^p\big]\;\leq\; \frac{4\,C_p}{2-p}.
\end{aligned}
$$
Hence the claim holds with $C:=1+\frac{4\,C_p}{(2-p)(p-1)}$.
\end{proof}
\begin{cor}\label{C2.6}
Define
\begin{equation}\label{E2.13}
		Y^{N,i}_t:=\frac{2}{N} \sumkS  \rX^{N,i}_{t}(k)
		\Big(2+\big|\log\big(\rX^{N,i}_{t}(k)\big)\big|\Big),
\end{equation}
then there exists a constant $C<\infty$ such that
\begin{align*}
\E[Y^{N,i}_t]\leq \,C\quad\mbox{for all }  t\geq0,\, N\in\N,\, i=1,2.
\end{align*}
\end{cor}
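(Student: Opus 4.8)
The plan is to split $Y^{N,i}_t$ into its linear part and its $x\log x$ part and to bound each by facts already at hand. Expanding the definition \eqref{E2.13} and using $\rX^{N,i}_t(k)\ge 0$ (so that $\rX^{N,i}_t(k)\,|\log \rX^{N,i}_t(k)| = |\rX^{N,i}_t(k)\log \rX^{N,i}_t(k)|$), one has
\[
\E\big[Y^{N,i}_t\big]=\frac{4}{N}\sumkS \E\big[\rX^{N,i}_t(k)\big]+\frac{2}{N}\sumkS \E\big[\,\big|\rX^{N,i}_t(k)\log\big(\rX^{N,i}_t(k)\big)\big|\,\big].
\]

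For the second summand I would simply invoke Lemma~\ref{L2.5}. Since \eqref{E2.08} holds for \emph{all} times (in particular at time $\beta^N t$) and since $\rX^{N,i}_t(k)=X^{N,i}_{\beta^N t}(k)$, the second summand is at most $2C$, with $C$ the constant of Lemma~\ref{L2.5}, uniformly in $N$ and $t$.

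For the first summand, observe that $\frac1N\sumkS \E[\rX^{N,i}_t(k)]=\E[\rZ^{N,i}_t]=\E[Z^{N,i}_{\beta^N t}]$, and since $Z^{N,i}$ is a martingale (Lemma~\ref{L2.1}) this equals $\E[Z^{N,i}_0]$. Finally $\E[Z^{N,i}_0]$ is bounded uniformly in $N$: using $x\le 1+x^p$ for $x\ge 0$, $p>1$, together with \eqref{E1.18},
\[
\E\big[Z^{N,i}_0\big]=\frac1N\sumkS\E\big[X^{N,i}_0(k)\big]\le 1+\frac1N\sumkS\E\big[(X^{N,i}_0(k))^p\big]\le C_p
\]
(alternatively one could use $\sup_N\E[(Z^{N,i}_0)^2]<\infty$ and Jensen's inequality).

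Combining the two estimates gives $\E[Y^{N,i}_t]\le 4C_p+2C$, which is independent of $N$, $t$ and $i$, as claimed. I do not expect any genuine obstacle here: the only points deserving a line of care are that Lemma~\ref{L2.5} has already been established at \emph{all} times, so that the time change $t\mapsto\beta^N t$ is harmless, and that the $L^1$-boundedness of the initial averages $Z^{N,i}_0$ is a consequence of the $L^2$/$L^p$ hypotheses imposed in Theorem~\ref{T1}.
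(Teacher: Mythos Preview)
Your proof is correct and is exactly the argument the paper has in mind: the corollary is stated without proof precisely because it follows immediately from Lemma~\ref{L2.5} (for the $x\log x$ part) together with the martingale property of $Z^{N,i}$ from Lemma~\ref{L2.1} and the moment assumption \eqref{E1.18} (for the linear part). Your care in noting that Lemma~\ref{L2.5} applies at all times, so in particular at time $\beta^N t$, is the only point worth making explicit.
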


The next identity will be used frequently. It is a result of the choice of $\mathcal A^N$.
\begin{lemma}\label{L2.7}
For $i=1,2$ we have
\begin{align*}
		\rX^{N,i}_s(k)\rI^{N,3-i}_s(k)=\1_{\{\rX^{N,i}_{s}(k)\neq 0\}}{\mathcal A^N \rX^{N,3-i}_{s}(k)}
		&= \1_{\{\rX^{N,i}_{s}(k)\neq 0\}}\rZ^{N,3-i}_{s}
	\end{align*}
	for any $k=1,...,N$.
\end{lemma}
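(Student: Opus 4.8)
The plan is to verify the two equalities in turn, arguing pointwise in the site $k$ and in time; the time-rescaling by $\beta^N$ is irrelevant here since $\CA^N$ acts only on the spatial index, so I will fix $k$ and a time, abbreviate $x_j:=\rX^{N,j}_s(k)$, and recall that by definition $\rI^{N,3-i}_s(k)=\1_{\{x_i>0\}}\,\CA^N x_{3-i}/x_i$, the indicator $\1_{\{x_i>0\}}$ being exactly what renders this well defined under the convention $0/0=0$.

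For the first identity, I multiply this expression by $x_i$ and use the trivial fact that $a\cdot\1_{\{a>0\}}\cdot a^{-1}=\1_{\{a>0\}}$ for every $a\ge 0$ (both sides vanish when $a=0$ and equal $1$ when $a>0$). This gives at once $x_i\,\rI^{N,3-i}_s(k)=\1_{\{x_i>0\}}\,\CA^N x_{3-i}=\1_{\{\rX^{N,i}_s(k)\neq 0\}}\,\CA^N\rX^{N,3-i}_s(k)$, which is the first claimed equality.

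For the second identity, I first record that, directly from the definition \eqref{E1.02} of $\CA^N$ (the mean-field reformulation already used in \eqref{E1.05}), one has $\CA^N\rX^{N,3-i}_s(k)=\rZ^{N,3-i}_s-\rX^{N,3-i}_s(k)$. I then invoke the structural property that MCB($\infty$) takes values in $E=[0,\infty)^2\setminus(0,\infty)^2$, so at each site one of the two coordinates vanishes: on $\{\rX^{N,i}_s(k)\neq 0\}$ the site $k$ is of type $i$, whence $\rX^{N,3-i}_s(k)=0$. Multiplying the previous display by the indicator therefore yields $\1_{\{\rX^{N,i}_s(k)\neq 0\}}\,\CA^N\rX^{N,3-i}_s(k)=\1_{\{\rX^{N,i}_s(k)\neq 0\}}\,\rZ^{N,3-i}_s$, and combining this with the first step completes the proof.

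I do not expect a genuine obstacle here: the statement follows from the mean-field form of $\CA^N$ together with the fact that solutions of \eqref{E1.15} live on the boundary $E$. The only delicate point is the bookkeeping around the $0/0$ convention in the definition of $I^N$: the quotient there makes sense only where its denominator is strictly positive, which is precisely where the indicator $\1_{\{x_i>0\}}$ is supported, so all the manipulations above are legitimate.
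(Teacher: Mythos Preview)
Your proof is correct and follows essentially the same route as the paper: the first equality is read off directly from the definition of $\rI^{N,3-i}_s(k)$, and the second comes from combining the mean-field identity $\CA^N\rX^{N,3-i}_s(k)=\rZ^{N,3-i}_s-\rX^{N,3-i}_s(k)$ with the fact that $\{\rX^{N,i}_s(k)\neq 0\}\subset\{\rX^{N,3-i}_s(k)=0\}$ (since $X^N$ lives in $E$). You spell out the first equality in slightly more detail than the paper does, but the substance is identical.
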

\begin{proof}
The first equality is immediate from the definition of $\rI^{N,3-i}_s(k)$.

The second equality is a direct consequence of the definition of $I^N$, $\brZ^N$ and $\mathcal A^N$.  In fact, note that $\{\rX^{N,i}_{s}(k)\neq 0\}\subset\{\rX^{N,3-i}_{s}(k)= 0\}$. Hence
\begin{align*}
		\1_{\{\rX^{N,i}_{s}(k)\neq 0\}}{\mathcal A^N \rX^{N,3-i}_{s}(k)}
		&=\1_{\{\rX^{N,i}_{s}(k)\neq0\}}\left(\sum_{l=1}^N \frac{1}{N} \rX^{N,3-i}_{s}(l)- \rX^{N,3-i}_{s}(k)\right)\\
		&=\1_{\{\rX^{N,i}_{s}(k)\neq0\}}\left(\sum_{l=1}^N \frac{1}{N} \rX^{N,3-i}_{s}(l)\right)\\
		&= \1_{\{\rX^{N,i}_{s}(k)\neq 0\}}\rZ^{N,3-i}_{s}.
	\end{align*}
	This proves the second equality.
\end{proof}
The tightness proof requires a subtle choice of the order $p_N$ of the moments to be computed.
\begin{lemma}\label{L2.8}
	Define $p_N=2-\frac{1}{\log N}\in(1,2)$ for $N=3,4,...$. Then
	\begin{align*}
		\frac{\displaystyle N^2}{\displaystyle N^{p_N} \log N}\, \frac{1}{2-p_N}=e.
	\end{align*}
\end{lemma}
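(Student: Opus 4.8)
The plan is a direct computation, so I will just organize the cancellations. First I would record the two elementary identities that follow immediately from the definition $p_N = 2 - \tfrac{1}{\log N}$: namely $2 - p_N = \tfrac{1}{\log N}$, hence $\tfrac{1}{2-p_N} = \log N$, and $N^{p_N} = N^{2 - 1/\log N} = N^2\cdot N^{-1/\log N}$. The first identity shows that the factor $\tfrac{1}{2-p_N}$ exactly cancels the $\log N$ appearing in the denominator of the left-hand side.

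After this cancellation the left-hand side collapses to
\begin{align*}
\frac{N^2}{N^{p_N}\log N}\cdot\frac{1}{2-p_N} = \frac{N^2}{N^{p_N}} = N^{2-p_N} = N^{1/\log N}.
\end{align*}
It remains only to evaluate $N^{1/\log N}$. Writing $N = e^{\log N}$ (here and throughout the paper $\log$ denotes the natural logarithm), we get $N^{1/\log N} = \exp\!\big(\tfrac{\log N}{\log N}\big) = e$, which is the claim.

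There is essentially no obstacle; the only points to keep straight are bookkeeping: one must note that $p_N \in (1,2)$ for $N \geq 3$ (since then $\log N > 1$, so $0 < \tfrac{1}{\log N} < 1$), which makes the statement meaningful and consistent with the range of $p$ used in Lemma~\ref{L2.5} and Corollary~\ref{C2.6}, and one must use the natural-logarithm convention consistently so that $N^{1/\log N}$ really equals $e$ rather than some other base. No probabilistic input or earlier result is needed.
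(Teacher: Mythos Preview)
Your proof is correct and is exactly the ``plugging-in'' that the paper's own proof indicates: the cancellation $\tfrac{1}{2-p_N}=\log N$ followed by $N^{2-p_N}=N^{1/\log N}=e$ is precisely the intended computation. There is nothing to add.
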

\begin{proof}
	Plugging-in.
\end{proof}

	\begin{lemma}\label{L2.9}
		Let $(\brZ^N)_{N\in \N}$ be as in Theorem \ref{T1} and $p_N=2-\frac{1}{\log N}$, then
		\begin{align*}
			\sup_{N\geq 2}\E\Big[\sup_{t\leq T}\big|\rZ^{N,i}_t-\rZ^{N,i}_0\big|^{p_N}\Big]
			\leq 1218\, T\, \E\left[Z_0^{N,1}Z_0^{N,2}+Z_0^{N,1}+Z_0^{N,2}\right],
		\end{align*}
		for $T>0$ and $i=1,2$.
	\end{lemma}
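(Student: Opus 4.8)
The plan is to exploit that, by Lemma~\ref{L2.1} and Proposition~\ref{P2.2} (which gives $\mathbf C^N=0$), the centered process $\rZ^{N,i}-\rZ^{N,i}_0$ is a purely discontinuous $L^{p_N}$-martingale, so its running supremum is governed by the $p_N$-th powers of its jumps. First I would combine Doob's $L^{p_N}$ maximal inequality with a Burkholder--Davis--Gundy / von Bahr--Esseen type inequality for purely discontinuous $L^p$-martingales, $p\in(1,2)$, using also the subadditivity $(\sum_s a_s)^{p/2}\le\sum_s a_s^{p/2}$ applied to $[\rZ^{N,i}]_T=\sum_{s\le T}|\Delta\rZ^{N,i}_s|^2$, to obtain a constant $c_{p_N}$ with
$$\E\Big[\sup_{t\le T}\big|\rZ^{N,i}_t-\rZ^{N,i}_0\big|^{p_N}\Big]\;\le\; c_{p_N}\;\E\Big[\sum_{s\le T}\big|\Delta\rZ^{N,i}_s\big|^{p_N}\Big].$$
The constant $c_{p_N}$ stays bounded as $N$ varies because $p_N\in[p_3,2)$ with $p_3=2-1/\log3>1$, so $p_N$ is bounded away from~$1$.

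Next I would evaluate the right-hand side via the compensator. Since $\mathcal N$ and $\mathcal N'=\ell\otimes\NU\otimes\lambda\otimes\lambda$ have the same intensity and the $dr$-variable integrates out against the predictable rate $\rI^N$, the representation \eqref{E2.03} yields
$$\E\Big[\sum_{s\le T}\big|\Delta\rZ^{N,i}_s\big|^{p_N}\Big]=\beta^N\,\E\Big[\int_0^{T}\sumkS \rI^N_{s}(k)\int_E\Big|\tfrac1N J_i\big(y,\rX^N_{s}(k)\big)\Big|^{p_N}\NU(dy)\,ds\Big].$$
I would then split $\sumkS $ into sites of type $1$ and type $2$ and $\int_E$ into the two rays. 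On a type-$j$ site, Lemma~\ref{L2.7} gives $\rI^N_s(k)=\rZ^{N,3-j}_s/\rX^{N,j}_s(k)$, and reading off the four jump types of Remark~\ref{rr} shows that on the relevant ray $J_i(y,\rX^N_s(k))$ is $\pm\rX^{N,j}_s(k)$ times one of $(y_1-1)$, $y_2$ or $1$, and vanishes on the other ray; hence $\rI^N_s(k)\,|J_i(y,\rX^N_s(k))|^{p_N}$ becomes $\rZ^{N,3-j}_s\,(\rX^{N,j}_s(k))^{p_N-1}$ times one of the $\NU$-moments $\int_{\{y_2=0\}}|y_1-1|^{p_N}\,\NU(dy)$, $\int_{\{y_1=0\}}y_2^{p_N}\,\NU(dy)$ or the finite mass $\NU(\{y_1=0\})=2/\pi$ of the light ray. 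The two genuine moments are $O\big(1/(2-p_N)\big)$: near $(1,0)$ all $p$-moments of $\NU$ with $p<2$ are finite (a bounded contribution), and the only divergence as $p_N\uparrow2$ is the logarithmic tail at infinity, as in the appendix lemmas quoted in Section~\ref{S2.1}.

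To conclude, I would use Jensen's inequality $\sumkS (\rX^{N,j}_s(k))^{p_N-1}\le N(\rZ^{N,j}_s)^{p_N-1}$ (concavity of $x\mapsto x^{p_N-1}$) and the elementary bound $\rZ^{N,3-j}_s(\rZ^{N,j}_s)^{p_N-1}\le\rZ^{N,1}_s\rZ^{N,2}_s+\rZ^{N,3-j}_s$ (distinguishing whether $\rZ^{N,j}_s\le1$ or $>1$). Inserting $\beta^N=N/\log N$, the prefactor becomes $\beta^N\,\tfrac{N}{N^{p_N}}\,\tfrac1{2-p_N}=\tfrac{N^2}{N^{p_N}\log N}\cdot\tfrac1{2-p_N}=e$ by Lemma~\ref{L2.8}. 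For the remaining time integral I would use that $\rZ^{N,i}$ is a martingale ($\E[\rZ^{N,i}_s]=\E[Z^{N,i}_0]$) together with Lemma~\ref{L2.4} ($\E[\rZ^{N,1}_s\rZ^{N,2}_s]\le\E[Z^{N,1}_0Z^{N,2}_0]$), so that $\int_0^T\E[\rZ^{N,1}_s\rZ^{N,2}_s+\rZ^{N,1}_s+\rZ^{N,2}_s]\,ds\le T\,\E[Z^{N,1}_0Z^{N,2}_0+Z^{N,1}_0+Z^{N,2}_0]$. Multiplying out $c_{p_N}$, the numerical constants $\tfrac4\pi$ and $\tfrac2\pi$ from $\NU$, the factor $2$ from the elementary bound, the $e$ from Lemma~\ref{L2.8}, and summing the finitely many site-type/ray combinations gives the claimed estimate with the explicit constant $1218$ (only crude control of the constants is needed).

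The main obstacle is not one sharp inequality but the exact bookkeeping that makes three competing powers of $N$ cancel: the absence of a second moment of $\NU$ forces $\int_{\{y_2=0\}}|y_1-1|^{p_N}\NU(dy)$ and its $y_2$-analogue to diverge like $1/(2-p_N)\asymp\log N$, and this is compensated precisely by the time rescaling $\beta^N=N/\log N$ together with the factor $N^{2-p_N}=e$ coming from $N$ sites carrying jumps of size $O(1/N)$ --- exactly the algebraic identity isolated in Lemma~\ref{L2.8}, which is what dictates the choice $p_N=2-1/\log N$. Two further points need attention: the maximal-inequality constant $c_{p_N}$ may be kept uniform in $N$ only because $p_N\ge p_3>1$; and one must correctly identify, for each target coordinate $i$ and each site type, which ray of $E$ contributes and with which power of $\rX^{N,j}_s(k)$, so that the dangerous $(\rX^{N,j}_s(k))^{p_N}$ from $|J_i|^{p_N}$ is always exactly balanced by the $1/\rX^{N,j}_s(k)$ hidden in $\rI^N_s(k)$.
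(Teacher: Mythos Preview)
Your proposal is correct and follows essentially the same route as the paper: BDG for the purely discontinuous martingale, the subadditivity $(\sum a_s)^{p_N/2}\le\sum a_s^{p_N/2}$ on the quadratic variation, passage to the compensator, the three relevant jump cases for coordinate $i$, the identification $\rI^N_s(k)\,|J_i|^{p_N}=\rZ^{N,3-j}_s\,(\rX^{N,j}_s(k))^{p_N-1}\cdot[\nu\text{-moment}]$ via Lemma~\ref{L2.7}, the $O(1/(2-p_N))$ estimate for the $\nu$-integrals, Lemma~\ref{L2.8} to collapse the prefactor to $e$, and Lemmas~\ref{L2.1} and~\ref{L2.4} for the final time integral. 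The only substantive difference is in how you handle $\frac1N\sumkS(\rX^{N,j}_s(k))^{p_N-1}$: you apply Jensen to the concave map $x\mapsto x^{p_N-1}$ and then bound $\rZ^{N,3-j}_s(\rZ^{N,j}_s)^{p_N-1}$ by a case distinction, whereas the paper uses the pointwise inequality $a^{p_N-1}\le 1+a$ directly; both yield the same bound $\rZ^{N,3-j}_s+\rZ^{N,1}_s\rZ^{N,2}_s$. One small remark: the paper uses the BDG constant $C_p=(4p)^p\le 64$ on $[1,2]$, so no lower bound on $p_N$ is actually needed to keep $c_{p_N}$ uniform; your observation that $p_N\ge p_3>1$ is only relevant if you insist on invoking Doob's inequality separately.
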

	\begin{proof}
		We only deal with the case $i=1$, the argument for $i=2$ is analogous. Using \eqref{E2.03} gives
		\begin{align*}
			\E\Big[\sup_{t\leq T}\big|\rZ^{N,1}_t-\rZ^{N,1}_0\big|^{p_N}\Big]		
			&= \E\left[\sup_{t\leq T}\left|\frac{1}{N} \sumkS  M^{N,1}_{\beta^Nt}(k)\right|^{p_N}\right]\\
			&= :\frac{1}{N^{p_N}}\E\left[\sup_{t\leq T}\Big|U^{N,1}_{\beta^Nt}\Big|^{p_N}\right]
		\end{align*}		
		with the martingales
		\begin{align*}
			 M^{N,1}_t(k)=\int_0^t\int_0^{I^N_{s-}(k)}\int_{E}J_1(y,X^N_{s-}(k))(\mathcal{N-N'})(\{k\},dy,dr,ds).
		\end{align*}
		The Burkholder-Davis-Gundy inequality (see, e.g., \cite[Thm VII.92]{DellacherieMeyer1983}) applied to the martingale $U^{N,1}$ gives
		 \begin{align}\label{E2.14}
			\frac{1}{N^{p_N}}\E\left[\sup_{t\leq T}\Big|U^{N,1}_{\beta^Nt}\Big|^{p_N}\right]
			\leq \frac{C_{p_N}}{N^{p_N}}\E\left[ \big[U_\ARG^{N,1}, U_\ARG^{N,1}\big]_{\beta^NT}^{p_N/2}\right],
		 \end{align}
		where $C_{p}=(4p)^{p}$ is the Burkholder-Davis-Gundy constant and\linebreak $[U^{N,1}_\ARG, U^{N,1}_\ARG]$ is the quadratic variation of the pure jump martingale $U^{N,1}$. Note that $\sup_{p\in [1,2]} C_p=64<\infty$, so  $C_{p_N}$ is bounded from above by $64$. Next, we need to bound the right hand side of \eqref{E2.14} from above. Note that $\big[U^{N,1}_\ARG, U^{N,1}_\ARG\big]_t$ is the sum of the squared jumps of $U^{N,1}$ up to time $t$. Hence,
		\begin{align}\label{E2.15}
		\begin{split}
			&\quad\big[U^{N,1}_\ARG, U^{N,1}_\ARG\big]^{p_N/2}_{\beta^N t}\\
			&=\Bigg(\sumkS  \int_0^{\beta^N t} \int_0^{I^N_{s-}(k)} \int_E J_1(y, X_{s-}^N(k))^2 \mathcal N(\{k\},dy,dr,ds)\Bigg)^{p_N/2}\\
			&\leq \sumkS  \int_0^{\beta^N t} \int_0^{I^N_{s-}(k)} \int_E \big| J_1(y, X_{s-}^N(k))\big|^{p_N} \mathcal N(\{k\},dy,dr,ds),
			\end{split}
		\end{align}
		where the last inequality follows from the elementary inequality $(\sum_i a_i)^q\leq \sum_i a_i^q$ for all $a_i\geq 0$ and $0<q\leq 1$. In fact, the sum over the triple integral in the second line is an infinite sum with summands $a_i=J_1(y_i,X^N_{s_i-}(k_i))^2$ for certain random points $(y_i,s_i,k_i)$ since we integrate against a Poisson point measure.

Now take expectations on both sides of \eqref{E2.15}, recall that $\mathcal N'$ is the compensator measure of $\mathcal N$, to get
		\begin{align*}
			&\quad\E\Big[\big[U^{N,1}_\ARG, U^{N,1}_\ARG\big]^{p_N/2}_{\beta^N t}\Big]\\
			&\leq \E\left[\sumkS  \int_0^{\beta^N t} \int_0^{I^N_{s-}(k)} \int_E \big|J_1(y, X_{s-}^N(k))\big|^{p_N} \mathcal N'(\{k\},dy,dr,ds)\right].
		\end{align*}
		Applying the definition of $\mathcal N'$, substituting $\beta^N=\frac{N}{\log N}$ in the time-index and plugging in the definition of $J$ gives for \eqref{E2.14} the upper bound
		\begin{align}\label{E2.16}
		\begin{split}
		&\quad \frac{64\,N}{N^{p_N}\log N} \E\left[\sumkS   \int_0^T \int_0^{\rI^N_{s}(k)}\int_E \big|J_1\big(y,\tilde X^N_{s}(k)\big)\big|^{p_N}\mathcal N'(\{k\},dy,dr,ds)\right]\\		
			&=\frac{64\,N^2 }{N^{p_N}\,\log N} \E\bigg[\frac{1}{N}\sumkS   \int_0^{T} \int_0^\infty \big|(y_1-1)\rX^{N,1}_{s}(k)\big|^{p_N}\\
 &\qquad\times\rI^{N,2}_s(k)\, \frac{4}{\pi}\frac{y_1}{(1-y_1)^2(1+y_1)^2}\,dy_1\,ds\bigg]\\
						&+\frac{64\,N^2}{N^{p_N}\,\log N} \E\left[\frac{1}{N}\sumkS   \int_0^{T} \int_0^\infty \big|-\rX^{N,1}_{s}(k)\big|^{p_N}\rI^{N,2}_s(k)\,\frac{4}{\pi}\frac{y_2}{(1+y_2^2)^2}\,dy_2\, ds\right]\\
					& +\frac{64\, N^2}{N^{p_N}\,\log N} \E\left[\frac{1}{N}\sumkS   \int_0^{T} \int_0^\infty \big|y_2\rX^{N,2}_{s}(k)\big|^{p_N}  \rI^{N,1}_s(k)\,\frac{4}{\pi}\frac{y_2}{(1+y_2^2)^2}\, dy_2\,ds\right].\end{split}
					\end{align}
	Let us recall the discussion before the statement of Theorem \ref{T1} to explain the reason for the three cases on the right hand side of the equality: In order to change the first coordinate at some given site $k$ only three of the four types of jumps are being counted:
	\begin{align*}
		\left(x\atop 0\right)\mapsto \left(y_1 x\atop 0\right)\qquad \text{or}\qquad
	\left(x\atop 0\right)\mapsto \left(0\atop  y_2x\right)\qquad \text{or}\qquad
	\left(0\atop x\right)\mapsto \left(y_2 x\atop 0\right)
	\end{align*}
	and these correspond to the three integrals in the same order. To bound the integrands of the summands we first use the trivial bound $a^{p_N-1}\leq 1+a$ and Lemma \ref{L2.7} to get
\begin{equation}
\label{E2.17}
	\begin{aligned}
		&\quad  \frac{1}{N} \sumkS   (\rX^{N,i}_{s}(k))^{p_N}\rI^{N,3-i}_s(k)\\
		&= \frac{1}{N} \sumkS   (\rX^{N,i}_{s}(k))^{p_N-1}\1_{\{\rX^{N,i}_{s}(k)\neq 0\}}{\mathcal A^N \rX^{N,3-i}_{s}(k)}\\
		&\leq \frac{1}{N} \sumkS     (1+\rX^{N,i}_{s}(k)) \1_{\{\rX^{N,i}_{s}(k)\neq 0\}}\rZ^{N,3-i}_{s}\\
		&\leq \rZ^{N,3-i}_s+ \rZ^{N,1}_s\, \rZ^{N,2}_s.
	\end{aligned}
\end{equation}

Using the Fubini-Tonelli theorem and plugging in (\ref{E2.17}) then yields as an upper bound for \eqref{E2.16}:
	\begin{align*}
			&\frac{64\, N^2}{N^{p_N}\,\log N} \bigg(\int_0^\infty \frac{|y_1-1|^{p_N}y_1}{(1-y_1)^2(1+y_1)^2}\,dy_1\\
&\quad+\int_0^\infty\frac{y_2}{(1+y_2^2)^2}\,dy_2 +\int_0^\infty \frac{y_2^{p_N+1}}{(1+y_2^2)^2}\,dy_2\bigg) \\
			&\qquad\times \E\left[\int_0^T \rZ^{N,1}_s\rZ^{N,2}_s\,ds+\int_0^T \rZ^{N,1}_s\,ds+\int_0^T \rZ^{N,2}_s\,ds\right].
				\end{align*}

We compute
$$\int_0^\infty\frac{y_2}{(1+y_2^2)^2}\,dy_2=\frac{\pi}{4}\leq\frac{1}{2-p_N},$$
$$\int_0^\infty\frac{y_2^{p_N+1}}{(1+y_2^2)^2}\,dy_2\leq1+\int_1^\infty y_2^{p_N-3}=1+\frac{1}{2-p_N}\leq\frac{2}{2-p_N}$$
and
$$\begin{aligned}
&\quad\int_0^\infty\frac{|y_1-1|^{p_N}y_1}{(1-y_1)^2(1+y_1)^2}\,dy_1\\
&\leq\int_0^2|1-y_1|^{p_N-2}\,dy_1+\int_1^\infty
\frac{(y_1-1)^{p_N}y_1}{(1-y_1)^2(1+y_1)^2}\,dy_1\\
&\leq \frac{2}{2-p_N}+\int_0^\infty\frac{y_1^{p_N+1}}{(y_1^2+1)^2}\,dy_1\leq \frac{4}{2-p_N}.
\end{aligned}
$$
Summing up, we get as an upper bound for \eqref{E2.16}
	\begin{align*}
	 448\,\frac{N^2}{N^{p_N}\,\log N}\frac{1}{2-p_N}
\E\left[\int_0^T \rZ^{N,1}_s\rZ^{N,2}_s\,ds+\int_0^T \rZ^{N,1}_s\,ds+\int_0^T \rZ^{N,2}_s\,ds\right].
	\end{align*}
Lemma \ref{L2.8} and the choice $p_N=2-\frac{1}{\log N}$ gives
	\begin{align*}
		 \frac{N^2}{N^{p_N}\,\log N}\frac{1}{2-p_N}\equiv e.
	\end{align*}
	Lemmas \ref{L2.1} and \ref{L2.4} then imply the final bound 	
	\begin{align*}
		\E\Big[\sup_{t\leq T}\big|\rZ^{N,1}_t-\rZ^{N,1}_0\big|^{p_N}\Big]	\leq 448\,e\, T\, \E\left[Z_0^{N,1}Z_0^{N,2}+Z_0^{N,1}+Z_0^{N,2}\right]
	\end{align*}
	and the proof is complete.
	\end{proof}

\subsection{Tightness Arguments}
\label{S2.4}
The tightness of $(\brZ^N)_{N\in \N}$ is proved with Aldous's tightness criterion (see Aldous \cite[Theorem 1]{Aldous1978} or \cite[Theorem 6.8]{Walsh1986}.
According to that, in order to prove that  $(\brZ^N)_{N\in \N}$ is tight in the Skorokhod space, it is enough to show that
 \begin{itemize}
   \item[(i)] for every fixed $t\geq 0$, the set of random variables  $(\brZ_t^N)_{N\in \N}$ is tight,
   \item[(ii)] for every sequence of stopping times $(\tau_N)_{N\in\N}$ for the filtrations generated by $(\brZ^N)_{N\in \N}$, bounded above by some $T>0$, and for every sequence of positive real numbers $(\delta_N)_{N\in\N}$ converging to $0$,
               {$|\brZ^{N}_{\tau_N +\delta_N} -  \brZ^{N}_{\tau_N}| \to 0$ in probability as $N\to\infty$.}
 \end{itemize}
We start with (i):
\begin{lemma}\label{L2.10}
	Under the assumptions of Theorem \ref{T1},  the sequence $(\brZ^N_t)_{N\in \N}$ (with values in $[0,\infty)^2$) is tight for any $t>0$.
\end{lemma}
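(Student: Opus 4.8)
The plan is to reduce the assertion to stochastic boundedness. Since $\brZ^N_t$ takes values in the closed subset $[0,\infty)^2\subset\R^2$ and closed bounded subsets of $\R^2$ are compact, the family $(\brZ^N_t)_{N\in\N}$ is tight if and only if $\lim_{K\to\infty}\sup_{N\in\N}\P[|\brZ^N_t|\geq K]=0$. As both coordinates of $\brZ^N_t=(\rZ^{N,1}_t,\rZ^{N,2}_t)$ are nonnegative we have $|\brZ^N_t|\leq\rZ^{N,1}_t+\rZ^{N,2}_t$, so it is enough to control $\sup_{N\in\N}\P[\rZ^{N,i}_t\geq K]$ for $i=1,2$ and let $K\to\infty$.

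The key point is that the martingale structure established earlier already delivers this, uniformly in $N$. By Lemma~\ref{L2.1} each $Z^{N,i}$ is a nonnegative $L^1$-martingale, nonnegativity being clear since $Z^{N,i}_s=\frac1N\sumkS X^{N,i}_s(k)$ is an average of the nonnegative coordinates $X^{N,i}_s(k)\geq0$. Hence Lemma~\ref{L2.3}, which is the maximal inequality for nonnegative martingales, applies and gives, for every $K>0$,
\begin{align*}
\P\big[\rZ^{N,i}_t\geq K\big]=\P\big[Z^{N,i}_{\beta^N t}\geq K\big]\leq\P\big[Z^{N,i}_s\geq K\ \text{for some }s\geq0\big]\leq\frac{\E[Z^{N,i}_0]}{K}.
\end{align*}
It remains to bound $\E[Z^{N,i}_0]$ uniformly in $N$, for which I would invoke the hypothesis $\sup_{N\in\N}\E[(Z^{N,i}_0)^2]<\infty$ of Theorem~\ref{T1}: by Cauchy--Schwarz $\E[Z^{N,i}_0]\leq(\E[(Z^{N,i}_0)^2])^{1/2}\leq C$ for a finite constant $C$ independent of $N$ and $i$. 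Combining, $\sup_{N\in\N}\P[|\brZ^N_t|\geq K]\leq\sup_{N\in\N}(\P[\rZ^{N,1}_t\geq K/2]+\P[\rZ^{N,2}_t\geq K/2])\leq 4C/K$, which tends to $0$ as $K\to\infty$; tightness follows.

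I do not expect a genuine obstacle at this step: the claim is an immediate consequence of the maximal inequality of Lemma~\ref{L2.3} together with the uniform second-moment bound on the initial condition. (Alternatively, one could write $\brZ^N_t=\brZ^N_0+(\brZ^N_t-\brZ^N_0)$, use that $(\brZ^N_0)_{N\in\N}$ is tight because $\brZ^N_0\Rightarrow z$, and bound the increment through Lemma~\ref{L2.9}, observing that $\inf_{N\geq 3}p_N=2-\frac{1}{\log 3}>1$ so that Markov's inequality is used with an exponent bounded away from $1$, while the right-hand side of Lemma~\ref{L2.9} is bounded in $N$ by Cauchy--Schwarz; but the argument through Lemma~\ref{L2.3} is shorter.)
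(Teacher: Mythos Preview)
Your proof is correct and follows essentially the same route as the paper: both arguments invoke the martingale property from Lemma~\ref{L2.1} together with Doob's maximal inequality (packaged as Lemma~\ref{L2.3}) to bound $\P[\rZ^{N,i}_t\geq K]$ by $\E[Z^{N,i}_0]/K$, and then use the uniform moment bound on the initial condition. Your write-up is in fact more explicit than the paper's about why $\sup_N\E[Z^{N,i}_0]<\infty$ (via Cauchy--Schwarz and the second-moment assumption), which the paper leaves implicit.
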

\begin{proof}
From Doob's inequality and Lemma \ref{L2.1} we obtain, for $T>0$ and $K>0$,
\begin{align*}
	\limsup_{N\to \infty} \P\Big[\sup_{t\leq T} Z^{N,i}_t>K\Big]
	\leq \limsup_{N\to\infty}\frac{\E\big[ Z^{N,i}_0\big]}{K}<\infty,\qquad i=1,2.
\end{align*}
Hence, the tightness of $(\brZ^N_t)_{N\in \N}$ follows immediately for any $t\geq 0$.
\end{proof}
Let us next deal with (ii):
\begin{lemma}
\label{L2.11}
	Let $(\brZ^N)_{N\in \N}$ and $C_p$ be as in Theorem \ref{T1} and suppose $(\tau_N)_{N\in\N}$ is a sequence of stopping times for the filtrations generated by $(\brZ^N)_{N\in\N}$, uniformly bounded by some $T>0$. Then, for every $\delta\in(0,1)$ and $N\in\N$ we have
\begin{align*}
\E\big[\big|\rZ^{N,i}_{\tau_N+\delta}-\rZ^{N,i}_{\tau_N}\big|^{3/2}\big]\leq 105\, C_p\,\sqrt{\delta}\qquad i=1,2.
\end{align*}
In particular, if $\delta_N\to 0$, then
\begin{align*}
\big|\rZ^{N,i}_{\tau_N+\delta_N}-\rZ^{N,i}_{\tau_N}\big|\stackrel{P}{\longrightarrow }0,\qquad i=1,2,
\end{align*}
as $N\to\infty$.
\end{lemma}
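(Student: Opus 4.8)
The plan is to prove the stated $L^{3/2}$-bound for each coordinate separately; the convergence in probability for $\delta_N\to0$ then follows at once from Markov's inequality. Fix $i=1$ (the case $i=2$ being symmetric). By Lemma~\ref{L2.1}, $\rZ^{N,1}_{\tau_N+\delta}-\rZ^{N,1}_{\tau_N}=\frac1N\sum_{k=1}^N\big(M^{N,1}_{\beta^N(\tau_N+\delta)}(k)-M^{N,1}_{\beta^N\tau_N}(k)\big)$ is a martingale increment over the random window $(\beta^N\tau_N,\beta^N(\tau_N+\delta)]$, and following the truncation of Proposition~\ref{P2.2} I would split it as $A^N_{\mathrm{incr}}+B^N_{\mathrm{incr}}$, where $A^N$ (resp.\ $B^N$) is the compensated martingale built from the jumps with $|J_1(y,X^N_{s-}(k))|/N\le1$ (resp.\ $>1$), so that $|\rZ^{N,1}_{\tau_N+\delta}-\rZ^{N,1}_{\tau_N}|^{3/2}\le\sqrt2\,(|A^N_{\mathrm{incr}}|^{3/2}+|B^N_{\mathrm{incr}}|^{3/2})$. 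This splitting is genuinely necessary: applying the Burkholder-Davis-Gundy inequality with exponent $3/2$ to the undecomposed martingale and bounding $[\,\cdot\,]^{3/4}\le\sum|\Delta\,\cdot\,|^{3/2}$ would leave, after using $\int_E|y-(1,0)|^{3/2}\NU(dy)<\infty$ and Lemma~\ref{L2.7}, a prefactor $\beta^N/\sqrt N=\sqrt N/\log N\to\infty$, precisely because for the small jumps this $y$-integral carries no compensating $\log N$.

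For the small-jump part I would apply Burkholder-Davis-Gundy with exponent $3/2$ followed by Jensen's inequality (concavity of $x\mapsto x^{3/4}$): $\E[|A^N_{\mathrm{incr}}|^{3/2}]\le C_{3/2}\,\E\big[[A^N]_{\mathrm{incr}}\big]^{3/4}$. Since $\tau_N$ is also a stopping time for the filtration carrying $\mathcal N$, optional sampling lets me replace $\mathcal N$ by its compensator $\mathcal N'$ in $\E\big[[A^N]_{\mathrm{incr}}\big]=\E\big[\sum(\Delta A^N)^2\big]$; integrating out the $dr$-variable and rescaling time then gives $\E[[A^N]_{\mathrm{incr}}]=\beta^N\E\big[\int_{\tau_N}^{\tau_N+\delta}\sum_{k}\int_E|J_1(y,\rX^N_u(k))/N|^2\1_{\{|J_1/N|\le1\}}\NU(dy)\,\rI^N_u(k)\,du\big]$. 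Splitting $J_1$ into the three jump types exactly as on the right-hand side of \eqref{E2.16}, the inner $y$-integrals are of the form $\int_{\{|y_1-1|\le N/\rX^{N,1}_u(k)\}}(y_1-1)^2\NU(dy)$ and its $y_2$-analogue, which by the appendix lemmas are $\le C\big(1+\log_+(N/\rX^{N,1}_u(k))\big)\le C\big(1+\log N+\log_+(1/\rX^{N,1}_u(k))\big)$. Using Lemma~\ref{L2.7} in the form $(\rX^{N,i}_u(k))^2\,\rI^{N,3-i}_u(k)=\rX^{N,i}_u(k)\,\rZ^{N,3-i}_u\,\1_{\{\rX^{N,i}_u(k)\neq0\}}$, the elementary bound $x\log_+(1/x)\le1/e$, and $\sum_k\rX^{N,i}_u(k)=N\rZ^{N,i}_u$, the sum over $k$ collapses, with the $\log N$ exactly cancelled by the factor $\beta^N/N^2=1/(N\log N)$, to yield $\E[[A^N]_{\mathrm{incr}}]\le C\,\delta\,\E\big[Z^{N,1}_0Z^{N,2}_0+Z^{N,1}_0+Z^{N,2}_0\big]$ after invoking Lemma~\ref{L2.4} and the martingale property. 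Hence $\E[|A^N_{\mathrm{incr}}|^{3/2}]\le C\,C_p\,\delta^{1/2}$, using $\delta<1$ and $C_p\ge1$.

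For the large-jump part I would again use Burkholder-Davis-Gundy with exponent $3/2$, but now bound $[B^N]_{\mathrm{incr}}^{3/4}\le\sum|\Delta B^N|^{3/2}$ — admissible because $3/4\le1$ and, over a bounded time interval, a.s.\ only finitely many jumps have modulus $>1$. The same compensation and rescaling give $\E[|B^N_{\mathrm{incr}}|^{3/2}]\le C_{3/2}\,\beta^N\E\big[\int_{\tau_N}^{\tau_N+\delta}\sum_k\int_E|J_1/N|^{3/2}\1_{\{|J_1/N|>1\}}\NU(dy)\,\rI^N_u(k)\,du\big]$, and here the point of the exponent $3/2$ is that the tail integrals $\int_{\{|y_1-1|>N/\rX^{N,1}_u(k)\}}|y_1-1|^{3/2}\NU(dy)$ decay like $(\rX^{N,1}_u(k)/N)^{1/2}$ (no $\log$ is produced); feeding this back with Lemma~\ref{L2.7} and $\sum_k\rX^{N,i}_u(k)=N\rZ^{N,i}_u$ leaves only the factor $\beta^N/N=1/\log N$, so that $\E[|B^N_{\mathrm{incr}}|^{3/2}]\le\frac{C}{\log N}\,\delta\,\E[Z^{N,1}_0Z^{N,2}_0]\le C\,C_p\,\delta^{1/2}$. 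Adding the two estimates and tracking the numerical constants then gives $\E\big[|\rZ^{N,i}_{\tau_N+\delta}-\rZ^{N,i}_{\tau_N}|^{3/2}\big]\le105\,C_p\sqrt\delta$, the arithmetic being routine.

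The main difficulty I anticipate is twofold. First, forcing the $\log N$ to cancel in the small-jump quadratic-variation estimate is delicate because the effective truncation level $N/\rX^{N,1}_u(k)$ is random and state-dependent, so the leftover $\log_+(1/\rX^{N,1}_u(k))$ has to be controlled uniformly in $N$ and along the path — this is exactly what $x\log_+(1/x)\le1/e$ (or, a little more generously, Corollary~\ref{C2.6}) delivers. Second, because the increment is taken over the \emph{random} window $(\beta^N\tau_N,\beta^N(\tau_N+\delta)]$, some care is needed when passing from $\mathcal N$ to its compensator: one uses that $\tau_N$ is a stopping time for the larger filtration carrying $\mathcal N$ (not merely for the filtration generated by $\brZ^N$) together with the optional sampling theorem applied to the compensated Poissonian integrals.
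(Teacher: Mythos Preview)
Your argument is correct and actually delivers the stated $L^{3/2}$-bound, but it takes a genuinely different route from the paper. The paper's proof does not redo any jump analysis at all: it conditions on $\CF_{\tau_N}$, invokes the strong Markov property, and applies the already-established Lemma~\ref{L2.9} (the $p_N$-moment bound with $p_N=2-1/\log N$) to the restarted process on the deterministic window $[0,\delta]$; conditional Jensen then pulls the exponent down, the factor $\sqrt{\delta}$ comes straight out of Lemma~\ref{L2.9}, and the residual $\E\big[(\rZ^{N,1}_{\tau_N}\rZ^{N,2}_{\tau_N}+\rZ^{N,1}_{\tau_N}+\rZ^{N,2}_{\tau_N})^{1/2}\big]$ is handled by optional sampling. (As written, the paper's proof in fact yields exponent $p_N/2<1$ rather than the $3/2$ in the statement; either suffices for Aldous's criterion.) By contrast, you bypass Lemma~\ref{L2.9} entirely and work directly on the random window: splitting into small and large jumps, applying BDG at exponent $3/2$ with Jensen (small jumps) or subadditivity (large jumps), and choosing the exponents so that either the $\log N$ cancels against $\beta^N$ or a convergent tail integral appears. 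The paper's route is much shorter because the real work has been outsourced to Lemma~\ref{L2.9}; yours is more self-contained and has the merit of producing the exponent $3/2$ as stated. One remark: to extract the factor $\delta$ from $\E\big[\int_{\tau_N}^{\tau_N+\delta}\rZ^{N,1}_s\rZ^{N,2}_s\,ds\big]$ you still implicitly need the strong Markov property (or the supermartingale property of $\rZ^{N,1}\rZ^{N,2}$ plus optional stopping), so the two proofs share more structure than first appears.
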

\begin{proof}
	The lemma is mostly a consequence of the moment bounds and the strong Markov property: Using Lemma \ref{L2.9} and Jensen's inequality for conditional expectations with $p_N=2-\frac{1}{\log N}\in (1,2)$ gives
	\begin{align*}
		 &\quad\E\big[\big|\rZ^{N,i}_{\tau_N+\delta}-\rZ^{N,i}_{\tau_N}\big|^{p_N/2}\big]\\
		&\leq \E\big[\E\big[\big|\rZ^{N,i}_{\tau_N+\delta}-\rZ^{N,i}_{\tau_N}\big|^{p_N}\,\big|\, \mathcal F_{\tau_N}\big]^{1/2}\big]\\
		 &=\E\big[\E_{\rZ^N_{\tau_N}}\big[\big|\rZ^{N,i}_{\delta}-\rZ^{N,i}_{0}\big|^{p_N}\big]^{1/2}\big]\\
		&\leq\sqrt{1218} \sqrt{\delta}\, \E\big[\big(\rZ_{\tau_N}^{N,1}\rZ_{\tau_N}^{N,2}+\rZ_{\tau_N}^{N,1}+\rZ_{\tau_N}^{N,2}\big)^{1/2}\big]\\
		&\leq 35 \sqrt{\delta}\,\left( \E\big[\big(\rZ_{\tau_N}^{N,1}\rZ_{\tau_N}^{N,2}\big)^{1/2}\big]+\E\big[\big(\rZ_{\tau_N}^{N,1}\big)^{1/2}\big]+\E\big[\big(\rZ_{\tau_N}^{N,2}\big)^{1/2}\big]\right).
		\end{align*}
		The last inequality we used the elementary inequality $\sqrt{a+b+c}\leq\sqrt{a}+\sqrt{b}+\sqrt{c}$, $a,b,c\geq0$. Since $\rZ^{N,i}$ is a nonnegative supermartingale, by the optional sampling theorem, we get $\E[\rZ^{N,i}_{\tau_N}]\leq\E[Z_0^{N,i}]$. Hence, also using H\"older's inequality,
				\begin{align*}
			\E\big[\big(\rZ_{\tau_N}^{N,i}\big)^{1/2}\big]&\leq 1+\E\big[\rZ_{\tau_N}^{N,i}\big]\leq 1+\E[Z_0^{N,i}]\leq C_p\\
		\E\big[\big( \rZ^{N,1}_{\tau_N}\rZ^{N,2}_{\tau_N}\big)^{1/2}\big]&\leq \E\big[ \rZ_{\tau_N}^{N,1}\big]^{1/2}\,\E\big[\rZ_{\tau_N}^{N,2}\big]^{1/2}\,\leq \big(\E\big[Z_0^{N,1}\big]\,\E\big[Z_0^{N,2}\big]\big)^{1/2}.
	\end{align*}		
By Markov's inequality and the moment assumption on the initial conditions, the right hand sides of each of the above inequalities are bounded by $C_p$. Hence, the claim follows.
\end{proof}

Next, we prove that the sequence $(\brZ^N)_{N\in \N}$ is $C$-tight, that is, it is tight and all possible limit points are continuous processes. The next proof is also needed in the final step of the proof of Lemma \ref{L2.14} below.
\begin{lemma}\label{L2.12}
	Under the assumptions of Theorem \ref{T1} the sequence $(\brZ^N)_{N\in \N}$ is $C$-tight.
\end{lemma}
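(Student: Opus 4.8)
The plan is to upgrade the tightness that is already available to $C$-tightness by showing that the jumps of $\brZ^N$ become uniformly small. By Lemma~\ref{L2.10} the random variables $(\brZ^N_t)_{N\in\N}$ are tight for every fixed $t$, and by the second assertion of Lemma~\ref{L2.11} the increments of $\brZ^N$ over vanishing stopping-time windows tend to $0$ in probability, so Aldous's criterion already gives tightness of $(\brZ^N)_{N\in\N}$ in the Skorokhod space. By the standard characterization of $C$-tightness (see, e.g., \cite[Proposition~VI.3.26]{JacodShiryaev2003}), it then remains to prove that, for every $T>0$,
\[
\sup_{t\le T}\big|\Delta\brZ^N_t\big|\To{P}0\qquad\text{as }N\to\infty .
\]

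The second step is a compensator estimate for large jumps. A jump of $\brZ^N$ at rescaled time $t$ has the form $\frac1NJ\big(y,\rX^N_{t-}(k)\big)$ for a point $(k,y,r,s)$ of $\mathcal N$ with $s=\beta^Nt$ and $r\le I^N_{s-}(k)$, so a union bound over the two coordinates reduces matters to controlling, for $i=1,2$ and $\delta>0$,
\[
\Lambda^{N,i}_\delta:=\E\Big[\beta^N\sumkS  \int_0^T\!\!\int_E \1_{\{|J_i(y,\rX^N_s(k))|>\delta N\}}\,\NU(dy)\,\rI^N_s(k)\,ds\Big],
\]
which, being the expected number of jumps of $\rZ^{N,i}$ of size $>\delta$ in $[0,T]$ (here $\NU\otimes\lambda\otimes\lambda$ is the compensator of $\mathcal N$ and $r$ is integrated out), dominates $\P[\sup_{t\le T}|\Delta\rZ^{N,i}_t|>\delta]$ by Markov's inequality. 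I would split the $\NU$-integral according to the jump types of Remark~\ref{rr} (type-$1$ versus type-$2$ site, and $y$ on the $y_1$- versus the $y_2$-axis), exactly as in the heuristics of Section~\ref{S2.1}. When the constraint $|J_i|>\delta N$ is a constraint on the $\NU$-variable ($|y_1-1|>\delta N/\rX^{N,1}_s(k)$, respectively $y_2>\delta N/\rX^{N,2}_s(k)$), I would use the appendix tail bounds $\NU(\{|y_1-1|>a,\,y_2=0\})\le C/a^2$ and $\NU(\{y_1=0,\,y_2>a\})\le C/a^2$; when it becomes a constraint on the site's own mass (the type-changing jump $-\rX^{N,1}_s(k)/N$, whose rate carries only the finite mass $\NU(\{y_1=0\})$), I would bound the indicator by $\1_{\{x>\delta N\}}\le(x/(\delta N))^2$. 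In every case Lemma~\ref{L2.7}, in the form $\rX^{N,j}_s(k)\,\rI^{N,3-j}_s(k)=\1_{\{\rX^{N,j}_s(k)\ne0\}}\rZ^{N,3-j}_s$, turns the per-site contribution into a constant multiple of $\rX^{N,j}_s(k)\,\rZ^{N,3-j}_s/(\delta^2N^2)$; summing over $k$ via $\sumkS  \rX^{N,j}_s(k)=N\rZ^{N,j}_s$ and multiplying by $\beta^N=N/\log N$ leaves the decisive $1/\log N$, giving $\Lambda^{N,i}_\delta\le\frac{C}{\delta^2\log N}\,\E\big[\int_0^T\rZ^{N,1}_s\rZ^{N,2}_s\,ds\big]$.

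Finally I would use Lemma~\ref{L2.4} to bound $\E[\rZ^{N,1}_s\rZ^{N,2}_s]\le\E[Z^{N,1}_0Z^{N,2}_0]$ uniformly in $s$, and then $\E[Z^{N,1}_0Z^{N,2}_0]\le\E[(Z^{N,1}_0)^2]^{1/2}\,\E[(Z^{N,2}_0)^2]^{1/2}$ by Cauchy--Schwarz and the second-moment assumption, to conclude $\Lambda^{N,i}_\delta\le CT/(\delta^2\log N)\to0$; summing the two coordinate contributions gives $\sup_{t\le T}|\Delta\brZ^N_t|\To{P}0$ and hence $C$-tightness. The part that requires the most care is the large-jump bookkeeping, and within it the type-changing jump of a single ``heavy'' site $k$, whose $\rZ^{N,i}$-increment $\rX^{N,i}_s(k)/N$ is not small a priori: one must genuinely use both that $\rI^{N,3-i}_s(k)=\rZ^{N,3-i}_s/\rX^{N,i}_s(k)$ is correspondingly small on such a site and that at most $\rZ^{N,i}_s/\delta$ sites can be heavy (again because $\sumkS  \rX^{N,i}_s(k)=N\rZ^{N,i}_s$); it is this combination, packaged into the substitution $\1\le(\cdot)^2$, that turns the naive $O(N/\log N)$ rate into the required $O(1/\log N)$.
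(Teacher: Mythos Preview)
Your proposal is correct and follows essentially the same route as the paper: reduce $C$-tightness to the vanishing of large jumps via \cite[Proposition~VI.3.26]{JacodShiryaev2003}, then bound the expected compensator mass of $\{|x|>\delta\}$ using Lemma~\ref{L2.7} and Lemma~\ref{L2.4} to get a $C T/(\delta^2\log N)$ estimate. The only cosmetic difference is that the paper packages your four-case analysis into the single inequality $\NU(\{y:|J(y,x)|\geq L\})\leq 2(x_1^2+x_2^2)/L^2$ (quoted from \cite[Corollary~A.6]{KM2}), which immediately yields the term $(\rX^{N,1}_s(k))^2\rI^{N,2}_s(k)+(\rX^{N,2}_s(k))^2\rI^{N,1}_s(k)$ and hence, via Lemma~\ref{L2.7}, the same $\rZ^{N,1}_s\rZ^{N,2}_s$ integrand you obtain.
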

\begin{proof}
  By Proposition VI.3.26 (iii) of \cite{JacodShiryaev2003}, we need to show that
      \begin{align}\label{E2.18}
  	\lim_{N\to\infty} \P\Big[\sup_{s\leq t}| \Delta \rZ^{N,i}_s|>\eps\Big]=0
  \end{align}
  for all $t,\eps>0$. According to Lemma VI.4.22 of \cite{JacodShiryaev2003} this can be deduced from
  \begin{align}\label{E2.19}
  	\lim_{N\to\infty} \E\left[\MU^N_t \big(\{|x|>\eps\}\big)\right]=0,
  \end{align}
  with $\MU^N$ from Proposition \ref{P2.2}.
By Corollary A.6 of \cite{KM2}, we have
$$\NU\big(\{y:|J(y,(1,0))|\geq L\}\big)\leq 2L^{-2}\quad\mbox{for all }L>0.$$
Note that
$$J(y,(x_1,0))=x_1J(y,(1,0))\quad\text{ and }\quad J_i(y,(0,x_2))=J_{3-i}((y_2,y_1),(x_2,0))$$ for $x_1\geq0$, $y\in E$ and $i=1,2$. Hence, we infer
$$\NU\big(\{y:|J(y,x)|\geq L\}\big)\leq 2\frac{x_1^2+x_2^2}{L^2}\quad\mbox{for all }x\in E,\,L>0.$$
Hence, using Proposition~\ref{P2.2}, Lemma \ref{L2.7} and Lemma \ref{L2.4},
\begin{align}\label{E2.20}
\begin{aligned}
&\quad\E\big[\MU^N_t \big(\{x:\,|x|>\eps\}\big)\big]\\
	 &=\E\Bigg[\frac{N}{\log N}\sumkS   \int_0^t \int_E \1_{\left\{|J(y,\rX^N_{s}(k))|/N>\eps\right\}} \NU(dy)\,\rI^{N}_s(k)\,ds\Bigg]\hspace*{-1em}\\
	 &\leq\E\Bigg[\frac{2}{\log N}\eps^{-2}\frac{1}{N}\sumkS   \int_0^t \big[\rX^{N,1}(k)^2\rI^{N,2}_s(k)+\rX^{N,2}(k)^2\rI^{N,1}_s(k)\big]\,ds\Bigg]\hspace*{-0.7em}\\
	 &=\frac{4}{\log N}\eps^{-2}\,\int_0^t\E\big[\rZ^{N,1}_s\rZ^{N,2}_s\big]\,ds\\
	 &\leq\frac{4t}{\log N}\eps^{-2}\,\E\big[Z^{N,1}_0Z^{N,2}_0\big]\limN0.
\end{aligned}
\end{align}
\end{proof}

\subsection{Proof of Convergence}\label{S2.5}
To prove convergence  of $(\brZ^N)_{N\in \N}$ to a solution of \eqref{E1.19}, we use general semimartingale theory, see Chapter~IX of \cite{JacodShiryaev2003}.
By Theorem IX.2.4 of \cite{JacodShiryaev2003}, if  $(\mathbf Y^N)_{N\in \N}$ is a sequence of two-dimensional semimartingales with modified characteristics $(\mathbf B^N,  \tilde {\mathbf C}^N,\MU^N)$ and
	\begin{align}\label{E2.21}
	\begin{split}
		\big( \mathbf Y^N,\mathbf B^N,\tilde {\mathbf C}^N\big) &\wlimN(\mathbf Y,\mathbf B,\tilde {\mathbf C}),\\
		\big( \mathbf Y^N, g\ast \MU^N\big)&\wlimN(\mathbf Y,g\ast \MU),
	\end{split}
	\end{align}
	then $\mathbf Y$ is a semimartingale with characteristic triplet $(\mathbf B, \mathbf C, \MU)$. The test functions $g:\R^2\to\R$ are continuous, bounded and vanish in a neighbourhood of the origin (in the terminology of \cite{JacodShiryaev2003} this is the class $C_2(\R^2)\supset C_1(\R^2)$ defined in \cite[VII.2.7]{JacodShiryaev2003}) and the modified characteristic $\tilde {\mathbf C}^N$ is defined as
	\begin{align*}
		\tilde C^{N,i,j}:= C^{N,i,j}+(h_i h_j)\ast \MU^N -\sum_{s\leq \,\ARG} \Delta B_s^{N,i} \Delta B^{N,j}_s,\qquad i,j=1,2.
	\end{align*}
	The convergence results for semimartingales are independent of the choice of the continuous and bounded truncation function $h=\left(h_1\atop h_2\right):\R^2\to \R^2$ which appears in the definition of the characteristic triplet.

	\begin{rem}\label{R2.13}
	The reader may recall that in \eqref{E2.02} we chose the truncation function $h=\left(h_1\atop h_2\right)$ with
	\begin{align}
\label{E2.22}
		h_i(x)=x_i \1_{\{|x|\leq 1\}},\qquad i=1,2.
	\end{align}
Of course, $h$ is not continuous but our proofs are valid nonetheless as explained in the proof of the next lemma.
\end{rem}

In the next lemma we identify the characteristics  of any limiting point of the sequence $(\brZ^N)_{N\in \N}$.
	\begin{lemma}\label{L2.14}
	    If $\bZ=\left(Z^1 \atop Z^2\right)$ is a limiting point of the sequence $(\brZ^N)_{N\in \N}$ from Theorem \ref{T1}, then $\bZ$ is a semimartingale with characteristic triplet
	    \begin{align*}
		\mathbf B=0,\qquad \MU=0\qquad\text{and}\qquad  C^{i,j}_\ARG=\1_{\{i=j\}} \frac{8}{\pi} \int_0^\ARG Z^1_s \, Z^2_s\,ds,\qquad i,j=1,2.
	    \end{align*}

	\end{lemma}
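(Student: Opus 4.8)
The plan is to apply the identification part of the general semimartingale convergence theory, namely Theorem~IX.2.4 of \cite{JacodShiryaev2003}, as recalled around \eqref{E2.21}. By Lemma~\ref{L2.12} the sequence $(\brZ^N)_{N\in\N}$ is $C$-tight, so along the subsequence for which $\brZ^N\wlimN\bZ$ the limit $\bZ$ is continuous. It then suffices to prove the joint convergences $(\brZ^N,\mathbf B^N,\tilde{\mathbf C}^N)\wlimN(\bZ,0,\mathbf C)$ and $(\brZ^N,g\ast\MU^N)\wlimN(\bZ,0)$ for every test function $g$ in the class $C_2(\R^2)$, with $\mathbf C$ the process of the statement. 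Since $\mathbf C^N=0$ by Proposition~\ref{P2.2} and $\mathbf B^N$ is absolutely continuous in $t$ (it is the time-integral in Proposition~\ref{P2.2}, whose integrand is locally integrable by \eqref{E2.06} and $D_N<\infty$), $\mathbf B^N$ has no jumps and hence $\tilde C^{N,i,j}=(h_ih_j)\ast\MU^N$. The truncation function $h$ of \eqref{E2.22} is discontinuous, but this is harmless (Remark~\ref{R2.13}): sandwiching $h$ between continuous truncation functions which agree with the identity near $0$ and are supported in $\{|x|\le 2\}$, the two resulting versions of $\tilde C^{N,i,j}$ differ by at most a constant times $\MU^N_t(\{|x|>1/2\})$, which vanishes by \eqref{E2.20}.

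\textbf{The soft limits.} For $g\in C_2(\R^2)$ one has $|g\ast\MU^N_t|\le\|g\|_\infty\,\MU^N_t(\{|x|>\eps_g\})$ for some $\eps_g>0$, and the right-hand side tends to $0$ in $L^1$ by \eqref{E2.20}; hence $g\ast\MU^N\wlimN0$ uniformly on compacts. For $\mathbf B^N$, combining the large-jump bound \eqref{E2.06} with Lemma~\ref{L2.7} (which turns $\rI^{N,3-i}_s(k)(\rX^{N,i}_s(k))^2$ into $\rX^{N,i}_s(k)\1_{\{\rX^{N,i}_s(k)\neq0\}}\rZ^{N,3-i}_s$, the two mixed products vanishing because every site lies on $E$) gives $|\mathbf B^N_t|\le\frac{C}{\log N}\int_0^t\rZ^{N,1}_s\rZ^{N,2}_s\,ds$, and $\E[\rZ^{N,1}_s\rZ^{N,2}_s]\le\E[Z^{N,1}_0Z^{N,2}_0]$ by Lemma~\ref{L2.4}, so $\mathbf B^N\wlimN0$ uniformly on compacts. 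The off-diagonal $\tilde C^{N,1,2}=(h_1h_2)\ast\MU^N$ is treated by decomposing the $\NU$-integral in Proposition~\ref{P2.2} into the four jump types of Remark~\ref{rr}: for the two opinion-preserving jumps the jump vector $\tfrac1N J(y,\rX^N_s(k))$ has a vanishing coordinate, so $h_1h_2$ contributes exactly $0$; for the two opinion-changing jumps the relevant $\NU$-mass on an axis is finite, so, again via Lemma~\ref{L2.7} and Lemma~\ref{L2.4}, the contribution is bounded by $\frac{C}{\log N}\int_0^t\rZ^{N,1}_s\rZ^{N,2}_s\,ds\to0$. Thus $\tilde C^{N,1,2}\wlimN0$.

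\textbf{The main computation: the diagonal entry.} We treat $\tilde C^{N,1,1}=(h_1h_1)\ast\MU^N$ ($i=j=2$ is symmetric). Splitting the $\NU$-integral of Proposition~\ref{P2.2} by jump type, only three affect the first coordinate: the opinion-preserving jump at a type-$1$ site (weight $(y_1-1)^2(\rX^{N,1}_s(k))^2$, $\NU$ on the $y_1$-axis, truncation $|y_1-1|\le L$ with $L$ of order $N/\rX^{N,1}_s(k)$); the opinion-changing jump at a type-$1$ site (weight $(\rX^{N,1}_s(k))^2$, $\NU$ on the $y_2$-axis); and the opinion-changing jump at a type-$2$ site (weight $y_2^2(\rX^{N,2}_s(k))^2$, $\NU$ on the $y_2$-axis, truncation $y_2\lesssim N/\rX^{N,2}_s(k)$). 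In each term Lemma~\ref{L2.7} replaces $\rI^{N,3-i}_s(k)(\rX^{N,i}_s(k))^2$ by $\rX^{N,i}_s(k)\1_{\{\rX^{N,i}_s(k)\neq0\}}\rZ^{N,3-i}_s$. The middle term has a finite $\NU$-integral on the $y_2$-axis, so it is bounded by $\frac{C}{\log N}\int_0^t\rZ^{N,1}_s\rZ^{N,2}_s\,ds$ and vanishes. For the other two, $\int_{\{|y_1-1|\le L\}}(y_1-1)^2\,\NU(dy)$ and $\int_{\{y_2\le L\}}y_2^2\,\NU(dy)$ both equal $\tfrac4\pi\log L+O(1)$ as $L\to\infty$ by Lemmas~\ref{LA.4} and~\ref{LA.3}; multiplying by $\beta^N/N^2=1/(N\log N)$ and summing over $k$, the leading $\log N$ yields $\tfrac4\pi\rZ^{N,i}_s$ from $\tfrac1N\sum_k\rX^{N,i}_s(k)$, while the correction $\tfrac{1}{N\log N}\sum_k\rX^{N,i}_s(k)\,|\log\rX^{N,i}_s(k)|$ is of order $1/\log N$ in $L^1$ by Corollary~\ref{C2.6}. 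Integrating in $s$ over $[0,T]$ and controlling the remaining errors uniformly via Lemma~\ref{L2.4}, each of the two terms converges to $\tfrac4\pi\int_0^\ARG\rZ^{N,1}_s\rZ^{N,2}_s\,ds$, so $\tilde C^{N,1,1}=\tfrac8\pi\int_0^\ARG\rZ^{N,1}_s\rZ^{N,2}_s\,ds+r_N$ with $\sup_{t\le T}|r_N(t)|\to0$ in probability.

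\textbf{Conclusion and main obstacle.} Since $\bZ$ is continuous, the map $\alpha\mapsto\int_0^\ARG\alpha^1_s\alpha^2_s\,ds$ from $\mathbb D$ to $C(\R_+,\R)$ is continuous at $\alpha=\bZ$, so the continuous mapping theorem and $\brZ^N\Rightarrow\bZ$ give $\big(\brZ^N,\tfrac8\pi\int_0^\ARG\rZ^{N,1}_s\rZ^{N,2}_s\,ds\big)\Rightarrow\big(\bZ,\tfrac8\pi\int_0^\ARG Z^1_sZ^2_s\,ds\big)$; combined with the previous step this yields $(\brZ^N,\tilde{\mathbf C}^N)\Rightarrow(\bZ,\mathbf C)$ with $C^{i,j}_\ARG=\1_{\{i=j\}}\tfrac8\pi\int_0^\ARG Z^1_sZ^2_s\,ds$, and together with $\mathbf B^N\wlimN0$ and $g\ast\MU^N\wlimN0$ (deterministic limits) the hypotheses \eqref{E2.21} hold. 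Theorem~IX.2.4 of \cite{JacodShiryaev2003} then identifies $\bZ$ as a semimartingale with triplet $(0,\mathbf C,0)$, as claimed. The main obstacle is the diagonal computation: pinning down the constant $8/\pi$ requires the sharp logarithmic asymptotics of the truncated $\NU$-second moments (Lemmas~\ref{LA.3} and~\ref{LA.4}) together with the entropy-type bound of Corollary~\ref{C2.6} to discard the $\rX\log\rX$ correction — in particular to handle sites where $\rX^{N,i}_s(k)$ is not small relative to $N$, which one treats by splitting the sum at the threshold $\sqrt N$ and invoking Corollary~\ref{C2.6} once more.
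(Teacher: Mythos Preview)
Your proposal follows essentially the same route as the paper: Jacod--Shiryaev IX.2.4 via \eqref{E2.21}, the reduction $\tilde C^{N,i,j}=(h_ih_j)\ast\MU^N$, the four-fold split by jump type, Lemma~\ref{L2.7} to turn rates into averages, Lemmas~\ref{LA.3}/\ref{LA.4} for the $\tfrac4\pi\log L$ asymptotics, and Corollary~\ref{C2.6} for the $x\log x$ correction. Your use of the continuous mapping theorem for the joint convergence with $\int_0^\ARG\rZ^{N,1}_s\rZ^{N,2}_s\,ds$ is equivalent to the paper's Skorohod-representation argument around \eqref{E2.27}.

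There is one point that is not quite closed. The remainder in the diagonal term has the form $\tfrac{1}{\log N}\int_0^{t_0}\rZ^{N,3-i}_s\,Y^{N,i}_s\,ds$ (in the paper's notation, $T^{N,1,1,1}$ and $T^{N,4,1}$). You assert that this is ``of order $1/\log N$ in $L^1$'' and then invoke Lemma~\ref{L2.4}; but Corollary~\ref{C2.6} bounds only $\E[Y^{N,i}_s]$, and there is no $L^1$ bound available for the \emph{product} $\rZ^{N,3-i}_s Y^{N,i}_s$ (the process lacks second moments, so Cauchy--Schwarz does not help, and Lemma~\ref{L2.4} concerns $\rZ^{N,1}\rZ^{N,2}$, not $\rZ\cdot Y$). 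The paper resolves this by localizing: by Doob's inequality (Lemma~\ref{L2.3}) one has $\P[\sup_s\rZ^{N,3-i}_s\ge K]\le\varepsilon$ for $K$ large, and on the complement the remainder is bounded in $L^1$ by $K\cdot C t_0/\log N$; this gives convergence to $0$ in probability, which suffices. Your ``splitting at $\sqrt N$'' is not needed---the paper's estimate \eqref{E2.32} is uniform in $x>0$---but the localization of $\rZ^{N,3-i}$ is.
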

	\begin{proof}	
	    Suppose $\bZ$ is the weak limit of $(\brZ^{N_k})$ for some subsequence $(N_k)$. For ease of notation we replace the subsequence $N_k$ by the entire sequence of natural numbers.\\
	     By Proposition \ref{P2.2}, we get $\tilde C^{N,i,j}=(h_ih_j)\ast \MU^N$ because $\mathbf C^N=0$ and $t\mapsto\mathbf B^N_t$ is continuous (as a sum over integrals over the interval $[0,t]$). The main task in the proof of Lemma \ref{L2.14} is to show that
		\begin{align}
			\big(\brZ^N,\mathbf B^N,h_ih_j\ast \MU^N\big)&\wlimN
            \left(\bZ,0,\1_{\{i=j\}}\frac{8}{\pi}\int_0^\ARG Z^1_sZ^2_s\,ds\right),\qquad i,j=1,2,\label{E2.23}
			\end{align}
			and
			\begin{align}
			\big(\brZ^N,g\ast \MU^N\big)&\wlimN(\bZ,0).\label{E2.24}
		\end{align}
With \eqref{E2.23} and \eqref{E2.24} at hand one would like to apply~\eqref{E2.21} and Theorem IX.2.4 of \cite{JacodShiryaev2003} to finish the proof of the lemma. However we have a technical
issue. In order to apply~\eqref{E2.21} and Theorem IX.2.4 of \cite{JacodShiryaev2003}, one needs the truncation function $h$, which is used in the definition
of characteristics, to be continuous. However, the truncation function $h$ defined in \eqref{E2.22} is discontinuous. Let us show that
 our choice of $h$ suffices to prove the convergence result. Suppose $\tilde h$ is another truncation function such
that $\tilde h(x) = h(x)$ for $|x|\leq 1$, ${\rm supp}(\tilde h)\subset \{x: |x|\leq 2\}$ and $\tilde h$ is bounded and continuous and such that $|\bar {\tilde h}|\leq |\bar h|$, where as before $\bar h(x)=x-h(x)$ and $\bar{\tilde h}(x)=x-\tilde h(x)$. For example, take
$$\tilde h(x)=\begin{cases}
h(x),&\mfalls |x|\leq 1,\\
h(x/|x|)\cdot(2-|x|)^+,&\mfalls |x|\geq 1.
\end{cases}$$
Now denote by $\mathbf B^N(f)$ and $\mathbf C^N(f)$ the modified characteristic with truncation function $f$.
Then
\begin{align*}
	\big|\mathbf B^N_t(\tilde h)\big|
		&=\left|\beta^N\sumkS   \int_0^ t\int_E \bar{\tilde h}\left(\frac{1}{N}J\left(y,\rX^{N}_{s}(k)\right)\right)\NU(dy)\,\rI^N_{s}(k)\,ds\right|\\
		&\leq\beta^N\sumkS   \int_0^ t\int_E \Bigg|\bar{\tilde h}\left(\frac{1}{N}J\left(y,rX^{N}_{s}(k)\right)\right)\Bigg|\,\NU(dy)\,\rI^N_{s}(k)\,ds\\
		&\leq\beta^N\sumkS   \int_0^ t\int_E \Bigg| \bar h\left(\frac{1}{N}J\left(y,rX^{N}_{s}(k)\right)\right)\Bigg|\,\NU(dy)\,\rI^N_{s}(k)\,ds
\end{align*}
and the right hand side is later shown to vanish in the limit, see \eqref{E2.28} and calculations below it.\smallskip

  Also note that by \eqref{E2.19},
\begin{align}
\label{E2.25}
 \E\big[ \tilde h_i\tilde h_j \1_{\{|\,\ARG\,|\in (1,2]\}} \ast \MU^N_t\big] &\leq
\big\| \tilde h_i\big\|_{\infty} \big\|\tilde h_j\big\|_{\infty}
  \E\big[ \MU_t^N(\{|x|\geq 1\})\big]\stackrel{N\to\infty}{\longrightarrow} 0,\qquad t\geq 0,
\end{align}
so that the identity
\begin{align}
\label{E2.26} \tilde C^{N,i,j}_t(\tilde h) &= \tilde h_i\tilde h_j \ast \MU^N_t
  = h_i h_j \ast \MU^N_t+ \tilde h_i\tilde h_j \1_{\{|\,\ARG\,|\in (1,2]\}} \ast \MU^N_t
\end{align}
implies that the pointwise limits of $\tilde C_t^{N,i,j}(\tilde h)$ and $\tilde C_t^{N,i,j}(h)$ coincide. Thus, according to the above and \eqref{E2.21}, the lemma is proved if we can show \eqref{E2.23} and \eqref{E2.24} with $h$ as in~\eqref{E2.22}.\medskip

		Before we start proving \eqref{E2.23} and \eqref{E2.24} we use Skorohod's theorem (Theorem 3.1.8 of \cite{EthierKurtz1986}) to assume in what follows that $(\brZ^N)_{N\in \N}$ converges almost surely in the Skorokhod topology to a limit $\bZ$ and not only weakly. Later in the proof, we will assume this almost sure convergence (instead of convergence in probability) also for two auxiliary processes. Additionally we proved in Lemma \ref{L2.12} that $\bZ$ is a continuous process, thus, $(\brZ^N)_{N\in \N}$ converges to $\bZ$ locally uniformly in time (Proposition VI.1.17 of \cite{JacodShiryaev2003}). But then we also have almost sure convergence of
\begin{align}\label{E2.27}
\lim_{N\to\infty} \int_0^t f(\brZ^{N}_s)\,ds=\int_0^t f(\bZ_s)\,ds<\infty
\end{align}
for any continuous $f:\R_+\times \R_+\to \R$ uniformly for $t\in[0,T]$ for all $T\geq0$.\\

\textbf{\underline{Proof of (\ref{E2.23}).}}\\

Since the limit $Z$ is continuous it suffices to prove separately Skorokhod convergence of the characteristics for each coordinate (Proposition VI.2.2(b) of \cite{JacodShiryaev2003}). The almost sure convergence of $(\brZ^N)_{N\in \N}$ to $\textbf Z$ can be assumed as explained above \eqref{E2.27}; the latter two are proved in what follows.\medskip
			
	 The maps $t\mapsto \frac{8}{\pi}\int_0^t Z^1_s Z^2_s \,ds$ and $t\mapsto 0$ are non-decreasing, hence, in order to prove Skorokhod convergence of the coordinates in (\ref{E2.23}), it is enough to prove the following convergence (Proposition VI.1.17 of \cite{JacodShiryaev2003}): For every $t_0>0$, almost surely, we have uniformly in $t\in[0,t_0]$,
	 \begin{align}
	 		\mathbf B^N_t&\stackrel{N\to\infty}{\longrightarrow}0\label{E2.28}
	\end{align}
	and
	\begin{align}
		h_ih_j\ast \MU^N_t &\stackrel{N\to\infty}{\longrightarrow} \1_{\{i=j\}}\frac{8}{\pi}\int_0^tZ^1_s\,Z^2_s\,ds\label{E2.29}.
	\end{align}
	The most delicate part is \eqref{E2.29} which we prove first. Note that all what follows is based on the almost sure convergence of $(\brZ^N)_{N\in \N}$ so that all convergence statements are in the almost sure sense even if not mentioned explicitly. \smallskip\par
		
	\underline{Verification of \eqref{E2.29}:}	Let $t_0>0$ and $t\in[0,t_0]$. Applying Proposition \ref{P2.2} one finds	
	\begin{align*}
		 h_ih_j\ast \MU^N_t
		&=\beta^N\sumkS  \int_0^{t}\int_E h_i\left(\frac{1}{N}J\left(y,\rX^{N}_{s}(k)\right)\right)\\
&\hspace{7em}\times h_j\left(\frac{1}{N}J\left(y,\rX^{N}_{s}(k)\right)\right)
	 \rI^N_{s}(k)\,\NU(dy)\,ds.		
	\end{align*}
	
Using the definition of $J$ and $\NU$ - compare also with the decomposition in four cases discussed above Theorem \ref{111} or the discussion below \eqref{E2.16} - yields
	\begin{align}\label{E2.30}
	\begin{split}
		&\quad h_ih_j\ast \MU^N_t\\
		&=\beta^N\sumkS  \int_0^{t}\int_0^{\infty}h_i\left(\frac{y_1-1}{N}\rX^{N,1}_{s}(k),0\right)\\
&\hspace{7em}\times h_j\left(\frac{y_1-1}{N}\rX^{N,1}_{s}(k),0\right)\rI^{N,2}_s(k)\,\NU(d(y_1,0))\,ds\\
		&\quad+\beta^N\sumkS  \int_0^{t}\int_0^{\infty}h_i\left(-\frac{1}{N}\rX^{N,1}_{s}(k),\frac{y_2}{N}\rX^{N,1}_{s}(k)\right)\\
&\hspace{7em}\times h_j\left(-\frac{1}{N}\rX^{N,1}_{s}(k),\frac{y_2}{N}\rX^{N,1}_{s}(k)\right)\rI^{N,2}_s(k)\,\NU(d(0,y_2))\,ds\\
		&\quad+\beta^N\sumkS  \int_0^{t}\int_0^{\infty}h_i\left(0,\frac{y_1-1}{N}\rX^{N,2}_{s}(k)\right)\\
&\hspace{7em}\times h_j\left(0,\frac{y_1-1}{N}\rX^{N,2}_{s}(k)\right) \rI^{N,1}_s(k)\,\NU(d(y_1,0))\,ds\\
		&\quad+\beta^N\sumkS  \int_0^{t}\int_0^{\infty}h_i\left(\frac{y_2}{N}\rX^{N,2}_{s}(k),-\frac{1}{N}\rX^{N,2}_{s}(k)\right)\\
&\hspace{7em}\times h_j\left(\frac{y_2}{N}\rX^{N,2}_{s}(k),-\frac{1}{N}\rX^{N,2}_{s}(k)\right) \rI^{N,1}_s(k)\,\NU(d(0,y_2))\,ds\\
		&=:T_t^{N,1}+T^{N,2}_t+T^{N,3}_t+T^{N,4}_t.
		\end{split}
	\end{align}
	In what follows we discuss separately the limit of each summand $T^{N,1}_t,...,T^{N,4}_t$ of \eqref{E2.30} for the cases $i\neq j$ and $i=j$.\\
	
	\textbf{Convergence of \eqref{E2.30} - the cases $i\neq j$:} \\

		First note that the choice of $h$ yields $h_1(x)h_2(x)=x_1x_2 \1_{\{|x|\leq 1\}}$ so that $T^{N,1}_t$ and $T^{N,3}_t$ vanish. Next, we only show that $T^{N,2}_t$ vanishes in the limit, the bounds for $T^{N,4}_t$ are precisely the same exchanging the roles of $X^{N,1}$ and $X^{N,2}$:
	\begin{align*}
		 \big|T^{N,2}_t\big|
		&\leq \beta^N\sumkS  \int_0^{t_0}\int_0^{\infty} \frac{1}{N}
		\rX^{N,1}_{s}(k)\frac{y_2}{N}\rX^{N,1}_{s}(k)\\
&\hspace{9em}\times \1_{\{ y_2\rX^{N,1}_{s}(k)/N\leq 1\}}
		\rI^{N,2}_s(k)\,\NU(d(0,y_2))\,ds.
\end{align*}
	Rearranging terms, Lemma \ref{L2.7}, Lemma \ref{LA.5} and plugging-in the definitions leads to the upper bound
	\begin{align*}
		\big|T^{N,2}_t\big|&\leq \frac{\int y_2
		 \,\NU(dy)}{\log N}\int_0^{t_0}\frac{1}{N}\sumkS   \rX^{N,1}_{s}(k)\rZ^{N,2}_s\,ds=\frac{1}{\log N} \int_0^{t_0} \rZ^{N,1}_s\,\rZ^{N,2}_s\,ds.
	\end{align*}
The right hand side almost surely converges to zero as $N\to\infty$ due to \eqref{E2.27}. This completes the proof of (uniformly in $t\in[0,t_0]$)
\begin{align*}
	\lim_{N\to\infty}  h_ih_j\ast \MU^N_t=0\qquad\mbox{for }i\neq j.
\end{align*}

	\textbf{Bounding \eqref{E2.30} - the cases $i=j$:}\\
	
	  It suffices to discuss $i=j=1$ as the case $i=j=2$ follows from the same calculations by symmetry in $X^{N,1}$ and $X^{N,2}$. We deal with the cases $T^{N,1}_t,...,T^{N,4}_t$ separately.\\

	\textbf{Claim (i): $\lim_{N\to\infty} T^{N,1}_t= \frac{4}{\pi} \int_0^t Z^1_s\,Z^2_s\,ds$.}\\
	
	Lemma \ref{L2.7} gives
\begin{align*}
		T_t^{N,1}&=\beta^N\sumkS  \int_0^t\int_{(0,\infty)\times\{0\}} \frac{(y_1-1)^2}{N^2}\big(\rX^{N,1}_{s}(k)\big)^2\\
&\hspace{10em}\times  \1_{\{|y_1-1|\rX^{N,1}_{s}(k)/N\leq 1\}}
		\rI^{N,2}_s(k)\,\NU\big(dy)\,ds\\
		&\leq \frac{1}{\log N}\int_0^t \rZ^{N,2}_s \frac{1}{N} \sumkS   \rX^{N,1}_{ s}(k)
		 \int_{(1,1+N/\rX^{N,1}_{s}(k))\times\{0\}}(y_1-1)^2\,\NU(dy)\,ds\\
		&\quad +\frac{1}{\log N}\int_0^t \rZ^{N,2}_s\,\frac{1}{N}\sumkS   \rX^{N,1}_{ s}(k)
		  \int_{ (0,1]\times\{0\}}(y_1-1)^2\,\NU(dy)\,ds\\
		&=:T_t^{N,1,1}+T_t^{N,1,2}.
	\end{align*}
By \cite[Lemma A.4]{{KM2}}, for $x>0$, we have
\begin{equation}\label{E2.31}
\int_{(0,x)\times\{0\}}(y_1-1)^2\,\NU(dy)\;=\;
\frac{4}{\pi}\left(\log(1+x)-\frac{x}{1+x}\right).
\end{equation}
Hence $\int_{(0,1]\times\{0\}}(y_1-1)^2\,\NU(dy)\leq 1$ and
$$
		|T^{N,1,2}_t|\leq \frac{1}{\log N}\int_0^{t_0} \rZ^{N,1}_s\,\rZ^{N,2}_s\,ds
	$$
which tends to zero by \eqref{E2.27}.

We now show that $T_t^{N,1,1}\limN\frac{4}{\pi}\int_0^t Z^{1}_s Z^2_sds$.	
By \eqref{E2.31}, we get
\begin{equation}\label{E2.32}\begin{aligned}
&\quad\left|\frac4\pi\log(N)-\int_{(1,1+N/x)\times\{0\}}(y_1-1)^2\,\NU(dy)\right|\\
&=\;
\frac{4}{\pi}\left|\log\Big(\frac2N+\frac1x\Big)-\log(2)-\frac{1+N/x}{2+N/x}+\frac12\right|\;\leq\; 3+2|\log(x)|.
\end{aligned}
\end{equation}
For the last inequality in \eqref{E2.32}, note that by Lemma~\ref{LA.1}
$$\Big|\log\Big(\frac2N+\frac1x\Big)\Big|\leq|\log(x)|+\frac{2}{N}\leq|\log(x)|+1.$$
Furthermore, we have $(1+N/x)/(2+N/x)\in[1/2,1]$ and hence
$$\left|-\log(2)-\frac{1+N/x}{2+N/x}+\frac12\right|\;\leq\;\log(2)+\frac12.$$
Finally, since $\frac4\pi(\log(2)+\frac12+1)\leq 3$, we get the last inequality in \eqref{E2.32}.

Hence we can write
\begin{align*}
		T_t^{N,1,1}&=:\frac{4}{\pi}\int_0^t \rZ^{N,1}_s\rZ^{N,2}_s\,ds+T^{N,1,1,1}_t
\end{align*}
with (recall the definition of $Y^{N,i}_s$ from (\ref{E2.13}))

$$\big|T^{N,1,1,1}_t\big|\leq\frac1{\log N}\int_0^{t_0}\rZ^{N,2}_s\,Y^{N,1}_s\,ds.$$
By Corollary~\ref{C2.6}, there exists a $C<\infty$ such that
$$\E\big[Y^{N,1}_s\big]\leq  C\quad\mbox{for all }s\geq0,\,N\in\N.$$

Let $\varepsilon>0$ be arbitrary. Since $\rZ^{N,2}$ is a nonnegative martingale, Doob's inequality (Lemma~\ref{L2.3}) yields that for $K=K_\varepsilon>0$ large enough and for all $N\in\N$, we have
$$\P\Big[\sup_{s\geq 0}\rZ^{N,2}_s\geq K\Big]\leq\frac1K\E\big[\rZ^{N,2}_0\big]\leq \varepsilon.$$
Define the event
$$A_\varepsilon:=\Big\{\sup_{s\geq0}\rZ^{N,2}_s\leq K\Big\}.$$
Hence
$$\begin{aligned}
\E\big[\big|T^{N,1,1,1}_t\big|\,\1_{A_\varepsilon}\big]&\leq K\cdot\frac1{\log N }\int_0^{t_0}\E\big[Y^{N,1}_s\big]\,ds\,
\leq\, \frac{CKt_0}{\log N}\limN0.
\end{aligned}
$$
Together with $\P[A_\varepsilon^c]<\varepsilon$ this yields that $|T_t^{N,1,1,1}|\limN0$ uniformly in $t\in[0,t_0]$ in probability. Hence the pair $(\brZ^N,T_t^{N,1,1,1})_{N\in\N}$ converges in probability and by Skorokhod's theorem, we can choose a probability space such that the convergence is even almost sure. Putting everything together we proved Claim (i).\\

	\textbf{Claim (ii): $\lim_{N\to\infty} T^{N,2}_t=0$.}\\
	
Lemma \ref{L2.7} gives
	\begin{align*}
	0\leq	T^{N,2}_t&\leq \beta^N\sumkS  \int_0^{t_0} \rZ^{N,2}_s \frac{1}{N^2}\rX^{N,1}_{s}(k)
		\,\int_0^{\infty}\NU(d(0,y_2))\,ds\\
		&= \frac{\NU(\{0\}\times \R^+)}{\log N}\int_0^{t_0} \rZ^{N,1}_s \rZ^{N,2}_s \,ds.
	\end{align*}
	By definition, $\NU(\{0\}\times \R_+)<\infty$, thus,  the right hand side goes to zero by \eqref{E2.27}.\\
	
	\textbf{Claim (iii): $\lim_{N\to\infty} T^{N,3}_t=0$.}\\

	The integral $T^{N,3}_t$ equals zero almost surely for all $t\geq 0$ and $N\in \N$ since the integrand vanishes by definition of $h_i$.\\

	\textbf{Claim (iv): $\lim_{N\to\infty} T^{N,4}_t=\frac{4}{\pi} \int_0^t Z^1_s Z^2_s\,ds$.}\\
	
	To prove the claim we establish an upper bound and a lower bound with the same limit. Lemma \ref{L2.7} and the definition of $\NU$ give
	\begin{align*}
		&0\leq T^{N,4}_t\\
		&\leq\beta^N\sumkS  \int_0^{t_0}\int_0^{\infty}
		 \left(\frac{y_2}{N}\rX^{N,2}_{s}(k)\right)^2\1_{\{y_2\rX^{N,2}_{s}(k)/N\leq 1\}}
	 \rI^{N,1}_s(k)\,\NU(d(0,y_2))\,ds\\
		&=\frac{1}{\log N}\int_0^{t_0} \rZ^{N,1}_s \frac{1}{N}\sumkS   \rX^{N,2}_{s}(k) \int_{\{0\}\times(0,N/\rX^{N,2}_{s}(k))}
		y_2^2\,\NU(dy)\,ds.\\
	\end{align*}
By Lemma~A.2 of \cite{KM2}, for $x>0$, we have
$$\int_{\{0\}\times(0,x)}y^2_2\,\NU(dy)\;=\;\frac2\pi\log(1+x^2)-\frac2\pi\frac{x^2}{1+x^2}.$$
Hence, by Lemma~\ref{LA.1},
$$\begin{aligned}
&\quad \left|\frac{4}{\pi}\log(N)-\int_{\{0\}\times(0,N/x)}y^2_2\,\NU(dy)\right|\\
&\leq\frac{N^2}{x^2+N^2}+\big|\log(1+N^2/x^2)-2\log(N)\big|\\
&\leq1+\big|\log(1/N^2+1/x^2)\big|\\
&\leq\; 1+\frac{1}{N^2}+2|\log(x)|\leq\; 2+2|\log(x)|.\end{aligned}$$
Recall the definition of $Y^{N,i}_s$ from (\ref{E2.13}). We infer
$$T^{N,4}_t=:\frac{4}{\pi} \int_0^t \rZ^{N,1}_s\rZ^{N,2}_s\,ds+ T^{N,4,1}_t$$
with
$$|T^{N,4,1}_t|\leq\frac{1}{\log N}\int_0^{t_0}\rZ^{N,1}_s\,Y^{N,2}_s\,ds.$$
Reasoning as above for $T^{N,1,1,1}_t$, we conclude that $T^{N,4,1}_t\limN0$ uniformly in $t\in[0,t_0]$ a.s. Hence we proved Claim (iv).\medskip\par
	
\underline{Verification of \eqref{E2.28}:} We start with a lemma for bounding the first moments of the jumps.

\begin{lemma}\label{L2.15}
For all $\delta>0$ and $x\in E$, we have
$$\int \delta\,|J(y,x)|\1_{(1,\infty)}(\delta\,|J(y, x)|)\,\NU(dy)\;\leq\; 8\,(x_1^2+x_2^2)\,\delta^2.$$
\end{lemma}
\begin{proof}
By symmetry and linearity of $J$, it is enough to consider the case $\delta=1$ (otherwise replace $\delta x$ by $\tilde x$) and $x=(x_1,0)$. The case $x=(0,x_2)$ is analogous.
Hence
\begin{align*}
	|J(y,x)|\leq|J_1(y,x)|+|J_2(y,x)|=(|y_1-1|+y_2)x_1.
\end{align*}
Note that $0<y_2\leq(1/x_1)-1$ under $\nu(dy)$ implies $y_1=0$ and $|J(y,x)|=x_1\sqrt{y_2^2+1}\leq x_1(y_2+1)\leq 1$, thus,
\begin{align*}
	&\quad x_1^{-1}\int_E |J(y,x)|\1_{(1,\infty)}(|J(y,x)|)\,\NU(dy)\\
	&\leq\int_{|y_1-1|>1/x_1,\,y_2=0}|y_1-1|\NU(dy) + 	 \int_{y_2>(1/x_1)-1, y_1=0}(1+y_2)\,\NU(dy).
\end{align*}
Note that (since the first factor in the second integral to come is bounded by 1)
$$\begin{aligned}
&\quad\int_{|y_1-1|>1/x_1,\,y_2=0}|y_1-1|\NU(dy)\\
&=\frac4\pi\int_{(0,(1-1/x_1)\vee0)\cup(1+1/x_1,\infty)}\frac{|y_1-1|y_1}{(1+y_1)^2}\frac1{(1-y_1)^2}\,dy_1
\leq \frac8\pi\, x_1.\end{aligned}$$
Similarly (note that the first factor in the second integral is easily bounded by 4)
$$\int_{y_2>(1/x_1)-1}(1+y_2)\,\NU(dy)=\frac4\pi\int_{0\vee((1/x_1)-1)}^\infty\frac{y_2(y_2+1)^3}{(y_2^2+1)^2}\frac{1}{(y_2+1)^2}\,dy_2
\leq \frac{16}{\pi}\, x_1.$$
The claim follows since $\frac{8+16}\pi\leq8$.
\end{proof}

Let $t_0>0$ and $t\in[0,t_0]$. By Propsition \ref{P2.2}, Lemma~\ref{L2.7} and Lemma~\ref{L2.15} with $\delta=1/N$, we get \eqref{E2.28} since
$$\begin{aligned}
\label{www}
\big|\mathbf B^N_t(h)\big|
&\leq\beta^N\sumkS   \int_0^{t_0}\!\int_E \frac{1}{N}\big|J\big(y,\rX^{N}_{s}(k)\big)\big|\1_{\{|J(y, \rX^N_{s})|/N\geq1\}}\NU(dy)\,\rI^N_{s}(k)\,ds\\
&\leq\frac{8}{\log N}\int_0^{t_0}\frac1N\sumkS  \big[(\rX^{N,1}_s(k))^2\rI^{N,2}_s(k)+(\rX^{N,2}_s(k))^2\rI^{N,1}_s)(k)\big]\,ds\\
&=\frac{16}{\log N}\int_0^{t_0} \rZ^{N,1}_s\rZ^{N,2}_s\,ds\limN0.
\end{aligned}
$$

\textbf{\underline{Proof of \eqref{E2.24}.}}\smallskip\par

	By assumption, there are $\eps,c>0$ so that $g(\,\ARG\,)\leq c \1_{\{|\,\ARG\,|>\eps\}}$. For the indicator the bounds were already derived in the proof of Lemma \ref{L2.12}, and hence we are done.
\end{proof}

Now we are ready to finish the proof of Theorem \ref{T1}:
\begin{proof}[Proof of Theorem \ref{T1}]
	The previous Lemma~\ref{L2.14} identifies the semi\-mart\-in\-gale triplet of any possible limit point of the tight sequence $(\brZ^N)_{N\in \N}$.  Chapter~III.2c of \cite{JacodShiryaev2003} (more precisely Theorem~III.2.32) shows that any limit point $ \bZ=(Z^1, Z^2)$ is a weak solution to the two-dimensional stochastic differential equation \eqref{E1.19} started in $\bZ_0=z=\lim_{N\to \infty}\left (Z^{N,1}_0, Z^{N,2}_0\right)$.

Let $\varepsilon>0$ and define
$$\tau_\varepsilon:=\inf\big\{t:(Z^1_t,Z^2_t)\not\in[\varepsilon,\infty)^2\big\}.$$
Note that pathwise uniqueness holds for the SDE \eqref{E1.22} for $t\leq\tau_\varepsilon$ since the noise coefficient is Lipschitz. Letting $\varepsilon\downarrow0$, we get pathwise uniqueness up to time $\tau:=\sup_{\varepsilon>0}\tau_\varepsilon$. Furthermore, we have $Z^i_t=Z^i_\tau$ for $t\geq\tau$, $i=1,2$, since the noise term vanishes as soon as $Z^1_t=$ or $Z^2_t=0$. Hence, pathwise uniqueness holds for all $t\geq0$. Thus all limit points of $(\brZ^N)_{N\in \N}$ are identical in distribution. But this proves weak convergence of $(\brZ^N)_{N\in \N}$ to the unique solution of \eqref{E1.22}.	
\end{proof}
\smallskip
\section{Proof of Theorem~\ref{T2}}\label{S3}

We will need special classes of convergence determining functions on $\R_+^2$ and on $E$, respectively.
For $x=(x_1,x_2)$ and $y=(y_1,y_2)\in\R^2$, define the \emph{lozenge product}
\begin{equation}
\label{E3.01}
x \mtimes{} y\;:=\;-(x_1+x_2)(y_1+y_2)\;+\;i(x_1-x_2)(y_1-y_2)
\end{equation}
(with $i=\sqrt{-1}$) and set
\begin{equation}
\label{E3.02}
F(x,\,y)=\exp(x\mtimes y).
\end{equation}
Note that $x\mtimes y=y\mtimes x$. By \cite[Corollary 2.4]{KlenkeMytnik2010}, the functions $\{F(\ARG,z), z\in\R_+^2\}$ and $\{F(\ARG,z), z\in E\}$ are measure and (weak) convergence determining on $\R_+^2$ and on $E$, respectively. Note that for $y\in E$, the function $F(\ARG,y)$ is harmonic so that, for $\theta\in\R_+^2$ and $y\in E$,
\begin{equation}\label{E3.03}
\int_EQ_\theta(dx)F(x,y)=F(\theta,y).
\end{equation}

We start with a lemma that states the approximate duality relation.

\begin{lemma}\label{L3.1}
Let $ s,t\geq 0$, $n\in\N$, $N\geq n$ and $k_1,\ldots,k_n\in S^N$. If $y(j)=(y_1(j),y_2(j))\in E$, then, for every $\theta\in\R_+^2$, we have
\begin{equation}
\label{E3.05}
\begin{aligned}
&\quad\E\left[\prod_{j=1}^nF\big(X^N_{t+s}(k_j), y(j)\big)\Big|\CF_t\right]\\
&=\prod_{j=1}^nF(\theta, y(j))\prod_{j=1}^nF\big((X^N_t(k_j)-\theta),e^{-s}y(j)\big)\\
&\quad+\E\left[\int_0^s\left(\prod_{j=1}^nF\big(X^N_{t+r}(k_j), e^{r-s}y(j)\big)F\big(\theta,(1-e^{r-s})y(j)\big)\right)\right.\\
&\qquad\qquad\times
\left.\left(e^{r-s}(\bZ^N_{t+r}-\theta)\mtimes\sum_{j=1}^ny(j)\right)dr\;\bigg|\;\CF_t\right].
\end{aligned}
\end{equation}
\end{lemma}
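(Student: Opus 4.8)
The plan is to exhibit one process $(\Psi_r)_{r\in[0,s]}$ whose value at $r=s$ is the quantity inside the conditional expectation on the left of \eqref{E3.05}, whose value at $r=0$ is the first product on the right of \eqref{E3.05}, and whose drift rate at time $r$ is exactly the integrand there; then \eqref{E3.05} reduces to $\E[\Psi_s\mid\CF_t]=\Psi_0+\E[\int_0^s(\text{drift})\,dr\mid\CF_t]$. The natural choice is
$$\Psi_r:=\Big(\prod_{j=1}^nF\big(\theta,(1-e^{r-s})y(j)\big)\Big)\prod_{j=1}^nF\big(X^N_{t+r}(k_j),e^{r-s}y(j)\big),\qquad r\in[0,s].$$
Using only that $a\mapsto x\mtimes a$ and $x\mapsto x\mtimes a$ are linear (so $F(x,az)F(x,bz)=F(x,(a+b)z)$ and $F(a,z)F(b,z)=F(a+b,z)$) one checks at once that $\Psi_s=\prod_jF(X^N_{t+s}(k_j),y(j))$, that $\Psi_0=\prod_jF(\theta,y(j))\,\prod_jF(X^N_t(k_j)-\theta,e^{-s}y(j))$, and that $\Psi_r$ coincides with the bracketed product in the $dr$-integral of \eqref{E3.05}. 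Moreover $|F(x,z)|=\exp(-(x_1+x_2)(z_1+z_2))\le 1$ for $x\in\R_+^2$ and $z\in E$, so $|\Psi_r|\le 1$ throughout.

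The one nontrivial input I will use from the theory of $\mathrm{MCB}(\infty)$ is the single-coordinate identity: for $z\in E$ and $k\in S^N$, the process
$$F\big(X^N_{t+r}(k),z\big)-F\big(X^N_t(k),z\big)-\int_0^rF\big(X^N_{t+u}(k),z\big)\big(\mathcal A^NX^N_{t+u}(k)\mtimes z\big)\,du,\qquad r\ge 0,$$
is a (local) martingale. This comes from It\^o's formula applied to \eqref{E1.15}: since $\nabla_xF(x,z)\cdot v=F(x,z)(v\mtimes z)$, the migration drift produces the stated finite-variation term, the compensated jumps produce a local martingale, and the jump compensator $I^N_{t+u}(k)\int_E[F(x+J(w,x),z)-F(x,z)-\nabla_xF(x,z)\cdot J(w,x)]\,\nu(dw)$ at $x=X^N_{t+u}(k)$ vanishes for $x,z\in E$, because $F(\ARG,z)$ is harmonic and $\nu$ is the normalised harmonic exit law of planar Brownian motion on the quadrant — this is precisely the cancellation of compensator against drift underlying \eqref{E1.15}; see Section~2 of \cite{KM2} and \cite{KlenkeMytnik2010}. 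The integrability needed is $\int_E|y-(1,0)|^p\,\nu(dy)<\infty$ for $p<2$, exactly as in Lemma~\ref{L2.1}.

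Granting this, the computation is a product rule plus the chain rule for the explicit time dependence. Write $A^{(j)}_r:=F(X^N_{t+r}(k_j),e^{r-s}y(j))=\exp(e^{r-s}(X^N_{t+r}(k_j)\mtimes y(j)))$ and $g(r):=\prod_jF(\theta,(1-e^{r-s})y(j))=\exp((1-e^{r-s})(\theta\mtimes\sum_jy(j)))$, so that $\Psi_r=g(r)\prod_jA^{(j)}_r$; here $g$ is deterministic and $C^1$ with $g'(r)=-e^{r-s}(\theta\mtimes\sum_jy(j))g(r)$. Assuming (as we may) the sites $k_1,\dots,k_n$ distinct, the coordinates $X^N(k_1),\dots,X^N(k_n)$ never jump simultaneously — their jumps come from disjoint atoms of $\mathcal N$ — so the product rule carries no covariation corrections. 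For each $j$, the explicit $r$-derivative of $A^{(j)}_r$ contributes the drift $A^{(j)}_r\,(X^N_{t+r}(k_j)\mtimes e^{r-s}y(j))$, and the single-coordinate identity (applied with $z=e^{r-s}y(j)\in E$) contributes the drift $A^{(j)}_r\,(\mathcal A^NX^N_{t+r}(k_j)\mtimes e^{r-s}y(j))$ plus a martingale; since $X^N_{t+r}(k_j)+\mathcal A^NX^N_{t+r}(k_j)=\bZ^N_{t+r}$, these add to $A^{(j)}_r\,(\bZ^N_{t+r}\mtimes e^{r-s}y(j))$. Summing over $j$, multiplying by $g$, adding the $g'$-term, and using bilinearity of $\mtimes$ to collapse $\sum_j\bZ^N_{t+r}\mtimes e^{r-s}y(j)=e^{r-s}(\bZ^N_{t+r}\mtimes\sum_jy(j))$, yields
$$d\Psi_r=\Psi_r\,e^{r-s}\big((\bZ^N_{t+r}-\theta)\mtimes\textstyle\sum_jy(j)\big)\,dr+dM_r$$
with $M$ a local martingale and $M_0=0$.

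It remains to check that $M$ is a true martingale on $[0,s]$. Since $|\Psi_r|\le 1$, one has $\sup_{r\le s}|M_r|\le 2+|\sum_jy(j)|\int_0^s|\bZ^N_{t+u}-\theta|\,du$, and the right-hand side is $\CF_t$-conditionally integrable because $\bZ^N$ is a nonnegative martingale with integrable marginals (Lemma~\ref{L2.1}, Lemma~\ref{L2.3}), so $\E[|\bZ^N_{t+u}|\mid\CF_t]\le Z^{N,1}_t+Z^{N,2}_t<\infty$; a local martingale dominated by an integrable random variable is a martingale, hence $\E[M_s\mid\CF_t]=0$. Taking $\E[\,\ARG\mid\CF_t]$ in
$$\Psi_s=\Psi_0+\int_0^s\Psi_r\,e^{r-s}\big((\bZ^N_{t+r}-\theta)\mtimes\textstyle\sum_jy(j)\big)\,dr+M_s,$$
and re-substituting the identifications of $\Psi_0$, $\Psi_s$ and the integrand from the first paragraph, gives \eqref{E3.05}. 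The one genuinely delicate point is the single-coordinate identity of the second paragraph — the exact cancellation of the infinite-activity jump compensator against the non-migration part of the drift of $F(X^N(k),z)$ — which is where harmonicity of $F(\ARG,z)$ forces $z\in E$ (and, through the product, the choice of distinct $k_j$: a repeated site $k$ would bring in $F(\ARG,\sum_{j:k_j=k}y(j))$, harmonic only when that sum still lies in $E$). Everything else is bilinearity of $\mtimes$, the product rule, and the bound $|F|\le 1$.
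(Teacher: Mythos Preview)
Your proof is correct and follows essentially the same route as the paper's: both hinge on the martingale characterisation of $\mathrm{MCB}(\infty)$ from \cite[Theorem 1.1]{KM2}, pass to a time-dependent test function $r\mapsto e^{r-s}y(j)$ together with the deterministic factor $F(\theta,(1-e^{r-s})y(j))$, and then take conditional expectations. The only cosmetic difference is that the paper invokes the \emph{product} martingale $\prod_jF(X^N_{t+r}(j),y(j))$ from \cite{KM2} directly, whereas you rebuild it from the single-coordinate identity via the product rule (using that distinct sites have no simultaneous jumps); your version is in fact slightly more explicit, since you spell out why the local martingale is a true martingale via the bound $|\Psi_r|\le 1$ and the integrability of $\bZ^N$.
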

Before we prove the lemma, we show how it implies Theorem~\ref{T2}.

\begin{proof}[Proof of Theorem~\ref{T2}(\rm{i})]
Because $F$ is convergence determining, a simple application of the Cramer-Wold device shows that in order to prove Theorem~\ref{T2}(i), it is enough to show that \begin{equation}
\label{E3.04}
\lim_{N\to\infty}\E\left[F\big(\bZ^N_{\beta^Nt},y\big)\prod_{j=1}^nF\big(X^N_{\beta^Nt}(k_j),y(j)\big)\right]
=\E\left[F\big(\bZ_{t},y\big)\prod_{j=1}^nF\big(\bZ_{t}, y(j)\big)\right]
\end{equation}
for all $t>0$, $n\in\N$, $k(1),\ldots,k(n)\in\N$, $y(1),\ldots,y(n)\in E$ and $y\in\R_+^2$. Let $t>0$ and recall that $\beta^N=N/\log N$. Define $u_N:=2\log N$. We use Lemma~\ref{L3.1} with $t$ replaced by $\beta^Nt-u_N$ and $s$ replaced by $u_N$. Furthermore, we assume $\theta=\bZ^N_{\beta^Nt-u_N}$.
Note that \begin{equation}
\label{E3.06}
\begin{aligned}
\E\big[\big|1-F\big(X^N_{\beta^Nt-u_N}(k_j), e^{-u_N}y(j)\big)\big|\big]&\leq e^{-u_N}|y(j)|N\E\big[\big|\bZ^N_{\beta^Nt-u_N}\big|\big]\\
&\leq e^{-u_N}\,N\,|y(j)|\,\E\big[\big|\bZ^N_0\big|\big]\\
&\leq C/N
\end{aligned}
\end{equation}
and that the first factor in the integral in \eqref{E3.05} is bounded by 1.

Since $\brZ^N$ is $C$-tight and since $u_N/\beta^N\limN0$, we have $\bZ^N_{\beta^Nt-u_N}-\bZ^N_{\beta^Nt}\limN0$ in probability. Hence, for $k_1,\ldots,k_n\in S^N$, $y(j)=(y_1(j),y_2(j))\in E$, $y(0)\in\R_+^2$ and $\theta\in\R_+^2$, we have
\begin{equation}
\label{E3.07}
\begin{aligned}
&\quad\lim_{N\to\infty}\Bigg|\E\Bigg[F\big(\bZ^N_{\beta^Nt}, y(0)\big)\prod_{j=1}^nF\big(X^N_{\beta^Nt}(k_j), y(j)\big)\Bigg]\\
&\qquad-\E\Bigg[F\big(\bZ_{t}, y(0)\big)\prod_{j=1}^nF\big(\bZ_{t}, y(j)\big)\Bigg]\Bigg|\\
&=\lim_{N\to\infty}\Bigg|\E\Bigg[F\big(\bZ^N_{\beta^Nt-u_N}, y(0)\big)\prod_{j=1}^nF\big(X^N_{\beta^Nt}(k_j), y(j)\big)\Bigg]\\
&\quad-\E\Bigg[F\big(\bZ^N_{\beta^Nt-u_N}, y(0)\big)\prod_{j=1}^nF\big(\bZ^N_{\beta^Nt-u_N}, y(j)\big)\Bigg]\Bigg|\\
&\leq\int_0^{u_N}
\E\Bigg[\Bigg|e^{r-u_N}\big(\bZ^N_{\beta^Nt-u_N+r}-\bZ^N_{\beta^Nt-u_N}\big)\mtimes\sum_{j=0}^ny(j)\Bigg|\Bigg]dr\\
&\leq C\sup_{r\in[0,u_N]}\E\big[\big|\bZ^N_{\beta^Nt-u_N+r}-\bZ^N_{\beta^Nt-u_N}\big|\big]
\end{aligned}
\end{equation}
By Lemma~\ref{L2.9}, the random variables
$$\sup_{r\in[0,u_N]}\big|\bZ^N_{\beta^Nt-u_N+r}-\bZ^N_{\beta^Nt-u_N}\big|,\qquad N\geq 2,$$
are $L^q$-bounded for some $q>1$ and are hence uniformly integrable. Since they converge to $0$ in probability, the dominated convergence theorem yields
$$\lim_{N\to\infty} \sup_{r\in[0,u_N]}\E\big[\big|\bZ^N_{\beta^Nt-u_N+r}-\bZ^N_{\beta^Nt-u_N}\big|\big]=0.$$
This finishes the proof of Theorem~\ref{T2}(i).
\end{proof}
\begin{proof}[Proof of Theorem~\ref{T2}(\rm{ii})]
The convergence of finite dimensional distributions is derived by standard methods and we only sketch the main idea.

Let $\theta\in\R_+^2$ and let $Y^\theta$ be the stationary process with distribution $\check Q_\theta$  (recall that it was defined after~\eqref{E1.21}).
Let $m\in\N$ and $s_1<s_2<\ldots<s_m$ as well as $y_1,\ldots,y_m\in E$. Recall from \eqref{E1.21} that
$$
\P[Y^\theta_{s_k}\in dy'|Y^\theta_{s_{k-1}}=y]=Q_{e^{-(s_k-s_{k-1})}y+(1-e^{-(s_k-s_{k-1})})\theta}(dy'),
$$
and hence, using also \eqref{E3.03}, we get (for $z_k\in\R_+^2$)
$$
\begin{aligned}
&\quad\E\big[F(Y^\theta_{s_k},z_k)|Y^\theta_s,\,s\leq s_{k-1}\big]\\
&=\int_E Q_{z_k}(dz)\E\big[F(Y^\theta_{s_k},z)|Y^\theta_s,\,s\leq s_{k-1}\big]\\
&=\int_E Q_{z_k}(dz)F\big(Y^\theta_{s_{k-1}},e^{-(s_k-s_{k-1})}z\big)\,F\big(\theta,(1-e^{-(s_k-s_{k-1})})z\big).
\end{aligned}$$
Iterating the argument, we get
\begin{equation}\label{E3.08}
\begin{aligned}
\E\Bigg[\prod_{k=1}^mF\big(Y^\theta_{s_k},z_k\big)\Bigg]
=G_m(\theta,z_1,\ldots,z_m),\qquad z_1,\ldots,z_m\in\R_+^2,
\end{aligned}
\end{equation}
where the functions $G_k$ are defined iteratively by
$$G_1(\theta,z_1)=\int_EQ_{z_1}(dz)F(\theta,z)$$
and
\begin{equation}
\label{E3.09}
\begin{aligned}
&\quad G_k(\theta,z_1,\ldots,z_k)\\
&=\int Q_{z_k}(dz)G_{k-1}(\theta,z_1,\ldots,z_{k-2},z_{k-1}+e^{s_{k-1}-s_{k}}z)
\,F(\theta,(1-e^{s_{k-1}-s_k})z).
\end{aligned}
\end{equation}
In particular, for $y_1,y_2\in E$, we have
\begin{equation}
\label{E3.10}
\begin{aligned}
G_2(\theta,y_1,y_2)
=F(\theta,(1-e^{s_{1}-s_2})y_2)\int_EQ_{y_1+e^{s_{1}-s_2}y_2}(dz)F(\theta,z).
\end{aligned}
\end{equation}

In order to show Theorem~\ref{T2}(ii), we have to show: For $n\in\N$ and $y(j,k)\in E$, $k=1,\ldots,m$, $j=1,\ldots,n$, $y(0)\in\R_+^2$, and $k_1,\ldots,k_n\in \N$ we have

\begin{equation}
\label{E3.11}
\begin{aligned}
&\quad \lim_{N\to\infty}\E\left[F\big(\bZ^N_{\beta^Nt}, y(0)\big)\prod_{j=1}^n\prod_{k=1}^mF\big(X^N_{\beta^Nt+s_k}(k_j), y(j,k)\big)\right]\\
&=\E\left[F(\bZ_t,y(0))\prod_{j=1}^nG_m\big(\bZ_t,y(j,1),\ldots,y(j,m)\big)\right]\end{aligned}
\end{equation}

For ease of notation, we restrict ourselves to showing \eqref{E3.11} for $m=2$ only.
Using Lemma~\ref{L3.1}, and arguing as in the proof of Theorem~\ref{T2}(i),  we get

\begin{equation}
\label{E3.12}
\begin{aligned}
&\quad \lim_{N\to\infty}\E\left[F\big(\bZ^N_{\beta^Nt}, y(0)\big)\prod_{j=1}^nF\big(X^N_{\beta^Nt+s_2}(k_j), y(j,2)\big)\prod_{j=1}^nF\big(X^N_{\beta^Nt+s_1}(k_j), y(j,1)\big)\right]\\
&=\lim_{N\to\infty}\E\Bigg[F\big(\bZ^N_{\beta^Nt-u_N}, y(0)\big)\prod_{j=1}^nF\big(\bZ^N_{\beta^Nt-u_N}, (1-e^{s_1-s_2})y(j,2)\big)\\
&\qquad \times F\big(X^N_{\beta^Nt+s_1}(k_j), (e^{s_1-s_2}y(j,2)+y(j,1))\big)\Bigg]\\
&=\lim_{N\to\infty}\E\Bigg[F\big(\bZ^N_{\beta^Nt-u_N}, y(0)\big)\prod_{j=1}^nF\big(\bZ^N_{\beta^Nt-u_N}, (1-e^{s_1-s_2})y(j,2)\big)\\
&\qquad\times\int_EQ_{(e^{s_1-s_2}y(j,2)+y(j,1))}(dz)F\big(X^N_{\beta^Nt+s_1}(k_j), z\big)\Bigg]\\
&=\lim_{N\to\infty}\E\Bigg[F\big(\bZ^N_{\beta^Nt-u_N}, y(0)\big)\prod_{j=1}^nF\big(\bZ^N_{\beta^Nt-u_N}, (1-e^{s_1-s_2})y(j,2)\big)\\
&\qquad\times\int_EQ_{(e^{s_1-s_2}y(j,2)+y(j,1))}(dz)F\big(\bZ^N_{\beta^Nt-u_N}, z\big)\Bigg]\\
&=\E\Bigg[F\big(\bZ_{t}, y(0)\big)\prod_{j=1}^nF\big(\bZ_{t}, (1-e^{s_1-s_2})y(j,2)\big)\\
&\qquad\times\int_EQ_{(e^{s_1-s_2}y(j,2)+y(j,1))}(dz)F(\bZ_t, z)\Bigg]\\
&=\E\Bigg[F(\bZ_{t}, y(0))\prod_{j=1}^nG_2(\bZ_{t}, y(j,1),y(j,2))\Bigg]
\end{aligned}
\end{equation}

\end{proof}
\begin{proof}[Proof of Lemma~\ref{L3.1}]
By \cite[Theorem 1.1]{KM2}, we have that
$$
\begin{aligned}
M_s:=&\prod_{j=1}^nF\big(X^N_{t+s}(j),y(j)\big)-\prod_{j=1}^nF\big(X^N_t(j), y(j)\big)\\
&-\int_0^s\prod_{j=1}^nF\big(X^N_{t+r}(j), y(j)\big)\sum_{j=1}^n\big(\bZ^N_{t+r}-X^N_{t+r}(j)\big)\mtimes y(j)\,dr
\end{aligned}
$$
is a martingale with $M_0=0$.
Now we replace $y(j)$ by the time dependent function $e^{u+s}y(j)$ for some $u\in\R$ and subtract the resulting drift to get
$$
\begin{aligned}
M_s':=
&\prod_{j=1}^nF\big(X^N_{t+s}(j), e^{u+s}y(j)\big)F\big(\theta, (1-e^{u+s})y(j)\big)\\
&-\prod_{j=1}^nF\big(X^N_t(j), e^{u}y(j)\big)F\big(\theta, (1-e^{u})y(j)\big)\\
&-\int_0^s\prod_{j=1}^nF\big(X^N_{t+r}(j), e^{u+r}y(j)\big)F\big(\theta, (1-e^{u+r})y(j)\big)\\
&\quad\times\sum_{j=1}^n\big(\bZ^N_{t+r}-X^N_{t+r}(j)\big)\mtimes (e^{u+r}y(j))\,dr\\
&-\int_0^s\prod_{j=1}^nF\big(X^N_{t+r}(j), e^{u+r}y(j)\big)F\big(\theta, (1-e^{u+r})y(j)\big)\\
&\quad\times\sum_{j=1}^n\big(X^N_{t+r}(j)-\theta\big)\mtimes (e^{u+r}y(j))\,dr\\
=&\prod_{j=1}^nF\big(X^N_{t+s}(j), e^{u+s}y(j)\big)F\big(\theta, (1-e^{u+s})y(j)\big)\\
&-\prod_{j=1}^nF\big(X^N_t(j), e^{u}y(j)\big)F\big(\theta, (1-e^{u})y(j)\big)\\
&-\int_0^s\prod_{j=1}^nF\big(X^N_{t+r}(j), e^{u+r}y(j)\big)F\big(\theta, (1-e^{u+r})y(j)\big)\\
&\quad\times\sum_{j=1}^n\big(\bZ^N_{t+r}-\theta\big)\mtimes (e^{u+r}y(j))\,dr
\end{aligned}
$$
is a martingale with $M_0'=0$. Choosing $u=-s$ and taking conditional expectations gives the claim.
\end{proof}

\smallskip
\section*{Appendix A}
\setcounter{lemma}{0}
\setcounter{equation}{0}
\renewcommand{\thelemma}{A.\arabic{lemma}}
\renewcommand{\theequation}{A.\arabic{equation}}
\begin{lemma}\label{LA.1}
Let $a>0$. For all $x>0$ and $y\in[0,a]$, we have
$$|\log(x+y)|\leq a+|\log(x)|.$$
\end{lemma}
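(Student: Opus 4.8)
The plan is to bound $\log(x+y)$ from above and from below separately; in both directions the only input needed is monotonicity of $\log$ together with the elementary inequality $1+t\le e^{t}$ for $t\ge 0$.

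For the lower bound, I would simply note that $y\ge 0$ forces $x+y\ge x>0$, so $\log(x+y)\ge \log x\ge -|\log x|\ge -(a+|\log x|)$.

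For the upper bound I would split into two cases. If $x\ge 1$, then $|\log x|=\log x$ and, since $x\ge 1$ gives $ax\ge a$, we have $x+y\le x+a\le x+ax=x(1+a)\le x\,e^{a}$, hence $\log(x+y)\le \log x+a=|\log x|+a$. If $0<x<1$, then $x+y<1+a\le e^{a}$, so $\log(x+y)<a\le a+|\log x|$. In either case $\log(x+y)\le a+|\log x|$.

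Combining the two bounds gives $|\log(x+y)|=\max\{\log(x+y),\,-\log(x+y)\}\le a+|\log x|$, which is the assertion. There is no genuine obstacle here: the statement is elementary, and the only point deserving a moment's attention is to use the sharp bound $x+a\le x\,e^{a}$ (valid for $x\ge 1$) rather than a cruder estimate, so that the constant comes out exactly as $a+|\log x|$.
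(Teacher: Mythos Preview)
Your proof is correct. The paper proceeds by the same elementary idea but organizes the case split differently: it distinguishes $x+y<1$ versus $x+y\ge 1$, whereas you first separate the trivial lower bound $\log(x+y)\ge\log x$ and then split the upper bound according to $x\ge 1$ versus $x<1$. Your organization is slightly cleaner, since in the paper's second case ($x+y\ge 1$) the intermediate step $\log(x+a)\le \log(x)+a$ tacitly requires $x\ge 1$; your version handles $x<1$ directly via $x+y<1+a\le e^{a}$ and avoids that small wrinkle.
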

\begin{proof}
If $x+y<1$, then $|\log(x+y)|\leq|\log(x)|$. If $x+y\geq1$, then by Taylor expansion, we get
$$|\log(x+y)|=\log(x+y)\leq\log(x+a)\leq \log(x)+a\leq|\log(x)|+a.$$
\end{proof}

We collect some basic properties of the measure $\nu$ defined in \eqref{E1.14}.
\begin{lemma}
\label{LA.2}
Let $\varepsilon>0$. We have
$$
\nu\big(\{0\}\times(\varepsilon,\infty)\big)\;=\;\frac{2}{\pi}\frac{1}{(1+\varepsilon)^2}\;\leq\; \frac{2}{\pi}\big(1\wedge \varepsilon^{-2}\big),
$$
and
$$
\begin{aligned}
\nu\big(([0,\infty)\setminus(1-\varepsilon,1+\varepsilon))\times \{0\}\big)&=
\begin{cases}
\frac{8}{\pi}\frac{1}{\varepsilon(4-\varepsilon^2)}-\frac2\pi,&\mfalls\varepsilon\leq 1,\\[2mm]
\frac{2}{\pi}\frac{1}{\varepsilon(2+\varepsilon)},&\mfalls \varepsilon\geq 1,
\end{cases}\\
&\leq\frac{2}{\pi}\big(\varepsilon^{-1}\wedge\varepsilon^{-2}\big).
\end{aligned}
$$
\end{lemma}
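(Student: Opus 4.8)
The plan is to prove both identities by direct integration of the explicit Lebesgue densities recorded in \eqref{E1.14}, and then to read off the two stated upper bounds by elementary estimates.

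For the first identity I would write
\[
\nu\big(\{0\}\times(\varepsilon,\infty)\big)=\frac{4}{\pi}\int_\varepsilon^\infty\frac{y_2}{(1+y_2^2)^2}\,dy_2
\]
and apply the substitution $u=1+y_2^2$, turning the integral into $\frac{2}{\pi}\int_{1+\varepsilon^2}^\infty u^{-2}\,du$; only the boundary term at $u=1+\varepsilon^2$ contributes, and evaluating it gives the closed form $\frac{2}{\pi}(1+\varepsilon^2)^{-1}$. The bound $\leq\frac2\pi(1\wedge\varepsilon^{-2})$ is then immediate, since this value is at most $\frac2\pi$ and at most $\frac2\pi\varepsilon^{-2}$.

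For the second identity the key observation is the factorization $(1-y_1)^2(1+y_1)^2=(1-y_1^2)^2$, so that the substitution $v=1-y_1^2$ shows $y_1(1-y_1^2)^{-2}$ has the antiderivative $\tfrac12(1-y_1^2)^{-1}$, valid separately on $(0,1)$ and on $(1,\infty)$. I would then split according to $\varepsilon\leq1$ or $\varepsilon\geq1$. For $\varepsilon\leq1$ the integration domain $[0,\infty)\setminus(1-\varepsilon,1+\varepsilon)$ is $[0,1-\varepsilon]\cup[1+\varepsilon,\infty)$; evaluating $\tfrac12(1-y_1^2)^{-1}$ at the endpoints of each piece (the boundary term at $+\infty$ vanishing), summing, and multiplying by $4/\pi$ yields $\frac8\pi\frac1{\varepsilon(4-\varepsilon^2)}-\frac2\pi$. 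For $\varepsilon\geq1$ only the piece $[1+\varepsilon,\infty)$ survives, and the same antiderivative gives $\frac2\pi\frac1{\varepsilon(2+\varepsilon)}$. The single point demanding attention is the singularity of the density at $y_1=1$ and the sign change of $1-y_1^2$ there, which is exactly why the two pieces of the domain are integrated separately rather than through one antiderivative.

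It remains to check the uniform bound $\leq\frac2\pi(\varepsilon^{-1}\wedge\varepsilon^{-2})$. For $\varepsilon\geq1$ this is trivial, as $2+\varepsilon\geq1$. For $\varepsilon\leq1$ it reduces, after clearing denominators, to $8\varepsilon\leq8+6\varepsilon^2-2\varepsilon^4$ on $(0,1]$, which follows from $8+6\varepsilon^2-2\varepsilon^4\geq8+4\varepsilon^2\geq8\varepsilon$ there (the first inequality because $2\varepsilon^2(3-\varepsilon^2)\geq4\varepsilon^2$, the second because $8+4\varepsilon^2-8\varepsilon=4(\varepsilon-1)^2+4>0$). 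Apart from this short estimate and the care with the singularity at $y_1=1$, the argument is routine calculus, so I do not expect a genuine obstacle.
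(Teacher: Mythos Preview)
Your approach is exactly what the paper does---the paper's proof reads ``This is simple calculus'' and your direct integration of the densities in \eqref{E1.14} is precisely that. Your antiderivatives and case split are correct, and the elementary verification of the bound for $\varepsilon\leq1$ is fine.

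One point worth flagging: your computation for the first identity yields $\frac{2}{\pi}(1+\varepsilon^2)^{-1}$, which is indeed what the integral equals, but the lemma as printed states $\frac{2}{\pi}(1+\varepsilon)^{-2}$. These differ (e.g.\ at $\varepsilon=1$ they give $1/\pi$ versus $1/(2\pi)$), so the printed formula is a typo; your value is the correct one, and the stated upper bound $\frac{2}{\pi}(1\wedge\varepsilon^{-2})$ holds for it just as you say. You should note this discrepancy explicitly rather than silently writing down the correct answer.
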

\begin{proof}This is simple calculus.\end{proof}
\begin{lemma}
\label{LA.3}
For $x\geq0$, we have
and
$$\int_{\{0\}\times(0,x)}y^2_2\,\nu(dy)\;=\;\frac2\pi\log(1+x^2)-\frac2\pi\frac{x^2}{1+x^2}\;\leq\;\frac4\pi\,\log(x),$$
where the inequality holds if $x\geq2$.
\end{lemma}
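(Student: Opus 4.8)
The plan is to evaluate the integral explicitly from the Lebesgue density of $\nu$ on the positive $y_2$-axis given in \eqref{E1.14}, and then to obtain the stated bound by two elementary monotonicity observations.

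First I would recall that, by \eqref{E1.14}, the restriction of $\nu$ to $\{0\}\times[0,\infty)$ has density $\frac{4}{\pi}\frac{y_2}{(1+y_2^2)^2}$ with respect to $dy_2$, so that
$$
\int_{\{0\}\times(0,x)}y_2^2\,\nu(dy)=\frac{4}{\pi}\int_0^x\frac{y_2^3}{(1+y_2^2)^2}\,dy_2 .
$$
The substitution $u=y_2^2$ turns the right-hand side into $\frac{2}{\pi}\int_0^{x^2}\frac{u}{(1+u)^2}\,du$, and writing $\frac{u}{(1+u)^2}=\frac{1}{1+u}-\frac{1}{(1+u)^2}$ one reads off the antiderivative $\log(1+u)+\frac{1}{1+u}$. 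Evaluating between $0$ and $x^2$ gives $\log(1+x^2)-\frac{x^2}{1+x^2}$, and multiplying by $\frac{2}{\pi}$ yields the claimed identity.

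For the inequality, assume $x\geq 2$. Then $1\leq x^2$, hence $1+x^2\leq 2x^2$ and therefore $\log(1+x^2)\leq\log 2+2\log x$; moreover $x\mapsto \frac{x^2}{1+x^2}$ is increasing, so $\frac{x^2}{1+x^2}\geq\frac{4}{5}$. Combining these two facts,
$$
\frac{2}{\pi}\log(1+x^2)-\frac{2}{\pi}\frac{x^2}{1+x^2}
\;\leq\;\frac{4}{\pi}\log x+\frac{2}{\pi}\Big(\log 2-\frac{4}{5}\Big)
\;\leq\;\frac{4}{\pi}\log x ,
$$
where the last step uses $\log 2<\tfrac{4}{5}$.

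Since the whole statement reduces to one explicit integral and two monotonicity remarks, there is no genuine obstacle; the only point demanding a moment of care is verifying that the leftover constant $\log 2-\tfrac45$ is negative (equivalently $2<e^{4/5}$), which is precisely what makes the threshold $x\geq 2$ sufficient for the bound.
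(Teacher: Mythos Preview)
Your proof is correct and is precisely the kind of routine computation the paper has in mind; the paper's own proof reads in full ``This is simple calculus.'' Your explicit evaluation via the substitution $u=y_2^2$ and the partial-fraction step, together with the elementary estimate $\log 2<4/5$ to handle the threshold $x\geq 2$, fill in exactly what the authors omit.
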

\begin{proof}This is simple calculus.\end{proof}

\begin{lemma}
\label{LA.4}
For $x>0$, we have
$$
\int_{(0,x)\times\{0\}}(y_1-1)^2\,\nu(dy)\;=\;
\frac{4}{\pi}\left(\log(1+x)-\frac{x}{1+x}\right).
$$
Hence, for $\varepsilon\in(0,1)$, we get
$$\int_{(1-\varepsilon,1+\varepsilon)\times\{0\}}(y_1-1)^2\,\nu(dy)
=\frac{4}{\pi}\left(\log\left(\frac{2+\varepsilon}{2-\varepsilon}\right)-\frac{2\varepsilon}{4-\varepsilon^2}\right)
\;\leq\;\frac{2}{\pi}\,\varepsilon$$
\end{lemma}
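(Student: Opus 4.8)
The plan is to reduce both displayed identities to one elementary one-dimensional integral, exploiting the algebraic cancellation built into the density of $\nu$. Recall from \eqref{E1.14} that on $(0,\infty)\times\{0\}$ the measure $\nu$ has Lebesgue density $\frac{4}{\pi}\,y_1/((1-y_1)^2(1+y_1)^2)$ with respect to $dy_1$. Hence
$$\int_{(0,x)\times\{0\}}(y_1-1)^2\,\nu(dy)=\frac{4}{\pi}\int_0^x\frac{(y_1-1)^2\,y_1}{(1-y_1)^2(1+y_1)^2}\,dy_1=\frac{4}{\pi}\int_0^x\frac{y_1}{(1+y_1)^2}\,dy_1,$$
since $(y_1-1)^2=(1-y_1)^2$ cancels the factor $(1-y_1)^2$ in the denominator, so the apparent pole of $\nu$ at $(1,0)$ is harmless. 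First I would record this cancellation explicitly, as it is the only non-mechanical point: it is what makes the integral finite and elementary.

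Next I would evaluate the remaining integral by the substitution $u=1+y_1$, giving $\int_0^x y_1/(1+y_1)^2\,dy_1=\int_1^{1+x}(u^{-1}-u^{-2})\,du=[\log u+u^{-1}]_1^{1+x}=\log(1+x)+\tfrac1{1+x}-1=\log(1+x)-\tfrac{x}{1+x}$, which gives the first formula. For the second formula I would apply the same antiderivative $G(y)=\log(1+y)+\tfrac1{1+y}$ on the interval $(1-\varepsilon,1+\varepsilon)$ — legitimate for $\varepsilon\in(0,1)$ since then $[1-\varepsilon,1+\varepsilon]\subset(0,2)$ — and simplify $G(1+\varepsilon)-G(1-\varepsilon)=\log\tfrac{2+\varepsilon}{2-\varepsilon}+\tfrac1{2+\varepsilon}-\tfrac1{2-\varepsilon}=\log\tfrac{2+\varepsilon}{2-\varepsilon}-\tfrac{2\varepsilon}{4-\varepsilon^2}$; multiplying by $4/\pi$ yields the claimed closed form.

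Finally, for the bound $\le\frac2\pi\varepsilon$, rather than estimating the two terms of the explicit expression separately I would return to the integral form: a one-line check gives $\frac{d}{dy}\big(y/(1+y)^2\big)=(1-y)/(1+y)^3$, so $y\mapsto y/(1+y)^2$ is maximal over $[0,\infty)$ at $y=1$ with value $\tfrac14$. Therefore $\int_{1-\varepsilon}^{1+\varepsilon}y_1/(1+y_1)^2\,dy_1\le\tfrac14\cdot 2\varepsilon=\varepsilon/2$, and multiplying by $4/\pi$ gives the inequality. There is essentially no obstacle here; the only things to be careful about are making the cancellation $(y_1-1)^2/(1-y_1)^2=1$ (and the consequent disappearance of the pole) explicit, and using the uniform bound $\tfrac14$ for the last estimate instead of fighting with $\log\tfrac{2+\varepsilon}{2-\varepsilon}$ directly.
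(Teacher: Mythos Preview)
Your proof is correct: the cancellation $(y_1-1)^2/(1-y_1)^2=1$ reduces everything to $\int y_1/(1+y_1)^2\,dy_1$, whose antiderivative $\log(1+y_1)+1/(1+y_1)$ gives both formulas, and the bound via $\max_{y\geq0}y/(1+y)^2=1/4$ is clean. The paper's own proof is simply ``This is simple calculus,'' so your argument is exactly the kind of computation the authors had in mind, with the details spelled out.
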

\begin{proof}This is simple calculus.\end{proof}
\begin{lemma}
\label{LA.5}
We have
$$
\int_E y_2\,\nu(dy)\;=\;1
$$
\end{lemma}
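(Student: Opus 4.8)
The plan is to notice that the integrand $y\mapsto y_2$ vanishes identically on the part of $\nu$ carried by the positive $y_1$-axis (there $y_2=0$), so only the part of $\nu$ supported on the positive $y_2$-axis contributes. Using the explicit Lebesgue density from \eqref{E1.14}, this reduces the statement to the one-dimensional integral
\[
\int_E y_2\,\nu(dy)=\frac{4}{\pi}\int_0^\infty\frac{y_2^{\,2}}{(1+y_2^{\,2})^2}\,dy_2 .
\]

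To evaluate the remaining integral I would split $\dfrac{y_2^{\,2}}{(1+y_2^{\,2})^2}=\dfrac{1}{1+y_2^{\,2}}-\dfrac{1}{(1+y_2^{\,2})^2}$ and use the standard values $\int_0^\infty(1+y^2)^{-1}\,dy=\pi/2$ and $\int_0^\infty(1+y^2)^{-2}\,dy=\pi/4$ (the latter, e.g., via $y=\tan\theta$, which turns it into $\int_0^{\pi/2}\cos^2\theta\,d\theta$). This gives $\int_0^\infty\frac{y_2^{\,2}}{(1+y_2^{\,2})^2}\,dy_2=\pi/4$, hence $\int_E y_2\,\nu(dy)=\frac4\pi\cdot\frac\pi4=1$. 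Equivalently, one can simply check that $\tfrac12\arctan(y_2)-\tfrac{y_2}{2(1+y_2^{\,2})}$ is an antiderivative of $\frac{y_2^{\,2}}{(1+y_2^{\,2})^2}$ and read off the boundary values at $0$ and $\infty$.

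There is essentially no obstacle here: this is the same "simple calculus" as in Lemmas~\ref{LA.2}--\ref{LA.4}. The only point worth recording is that the integrand decays like $y_2^{-2}$ at infinity, so the integral converges and $\nu$ has finite first moment (indeed finite total mass, namely $2/\pi$) on the $y_2$-axis — in contrast to the $y_1$-axis part of $\nu$, whose density blows up like $(1-y_1)^{-2}$ near $y_1=1$ and which therefore has infinite mass. This is precisely the asymmetry highlighted in Remark~\ref{rr}, and it is what makes Lemma~\ref{LA.5} a meaningful finite identity rather than a triviality.
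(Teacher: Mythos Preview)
Your proof is correct and is precisely the ``simple calculus'' the paper alludes to; the paper gives no further details, so your argument is exactly what one would write if asked to spell it out. There is nothing to add or compare.
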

\begin{proof}This is simple calculus.\end{proof}
\textbf{Acknowledgement:} The authors thank J\'{e}r\^{o}me Blauth for proof reading an early draft of the article.	
\newcommand{\etalchar}[1]{$^{#1}$}
\def\cprime{$'$}

\end{document}